\pdfoutput=1
\documentclass[11pt, reqno]{amsart}

  \hfuzz 30pt
    \vfuzz 30pt


    \topmargin 0pt
    \advance \topmargin by -\headheight
    \advance \topmargin by -\headsep

    \textheight 9.2in
\hoffset=-0.4cm
    \oddsidemargin 0pt
    \evensidemargin \oddsidemargin
    \marginparwidth 0.5in

    \textwidth 6.0in
\setlength{\parskip}{0.2cm}

\usepackage{enumitem}   
\usepackage{graphicx}
\usepackage{amssymb}
\usepackage{mathrsfs}
\usepackage{epstopdf}

\DeclareGraphicsRule{.tif}{png}{.png}{`convert #1 `dirname #1`/`basename #1 .tif`.png}
\usepackage{amsbsy,amssymb,amscd,amsfonts,latexsym,amstext,delarray, amsmath,graphicx,color,caption}
\usepackage{graphicx}
\usepackage{mathtools}
\usepackage{hyperref} 
\usepackage{amsmath}
\usepackage{amssymb}

\usepackage{mathtools}
\usepackage{epstopdf}
\usepackage[new]{old-arrows}

\usepackage[english]{babel}
\usepackage[autostyle]{csquotes}
\usepackage{ extpfeil}
\usepackage{relsize}

\usepackage{hyperref}
\usepackage[english]{babel}
\usepackage[autostyle]{csquotes}
\usepackage{textcomp}    
\usepackage{amscd}
\usepackage{tikz}
\usepackage{tikz-cd}
\usetikzlibrary{positioning}
\usetikzlibrary{arrows,arrows.meta}
\usepackage{extarrows}
\usepackage{tikz}
\usetikzlibrary{positioning}

\tikzset{>=stealth}

\usetikzlibrary{positioning}
\usepackage{extarrows}
\DeclareMathOperator{\Aut}{Aut}

 \hypersetup{
     colorlinks   = true,
     citecolor    = blue
}

\DeclareGraphicsRule{.tif}{png}{.png}{`convert #1 `dirname #1`/`basename #1 .tif`.png}
\numberwithin{equation}{section}
\newtheorem{theorem}{Theorem}[subsection]
\newtheorem{proposition}[theorem]{Proposition}
\newtheorem{remark}[theorem]{Remark}
\newtheorem{corollary}[theorem]{Corollary}
\newtheorem{lemma}[theorem]{Lemma}

\newtheorem*{claim}{Claim}
\theoremstyle{definition}
\newtheorem{definition}[theorem]{Definition}

\title[GINDIKIN-KARPELEVICH FINITENESS]{ FINITENESS THEOREMS  FOR  KAC-MOODY GROUPS OVER NON-ARCHIMEDEAN LOCAL FIELDS}
\author{Abid Ali }
\address{Department of Mathematical and Statistical Sciences\\
University of Alberta\\
  632 Central Academic Building, Edmonton AB T6G 2G1, Canada}
\email{abid3@ualberta.ca}

\begin{document}
\begin{abstract}
We prove the finiteness of formal analogues of the spherical function (Spherical Finiteness), the ${\mathbf c}$-function  (Gindikin-Karpelevich Finiteness), and obtain a formal analogue of Harish-Chandra's limit (Approximation Theorem) relating spherical and ${\mathbf c}$-function in the setting of $p$-adic Kac-Moody groups.
The finiteness theorems imply that the formal analogue of the Gindikin-Karpelevich integral is well defined in local Kac-Moody settings. These results extend the Braverman-Garland-Kazhdan-Patnaik's affine Gindikin-Karpelevich finiteness theorems from \cite{BGKP} and provide an algebraic analogue of the geometrical results of Gaussent-Rousseau \cite{GR2} and H\'{e}bert \cite{Aug}. 

\end{abstract}
\maketitle

\section {Introduction }\label{Fin&Aff}
\subsection{} In \cite{HC1}, Harish-Chandra  associates a certain spherical function to a connected, semisimple Lie group $G$ with finite center. To describe this function, let $\mathfrak{g}:={\rm Lie}(G)$ and $K$ be a maximal compact subgroup of $G$, and $\theta: \mathfrak{g} \rightarrow \mathfrak{g}$ the corresponding Cartan involution, which has eigenvalues $\pm 1$. Letting $\mathfrak{k} = \{ X \in \mathfrak{g} \mid \theta(X) = X \}$ and $\mathfrak{p} = \{ X \in \mathfrak{g} \mid \theta(X) = - X \},$ we have a \emph{Cartan decomposition} $\mathfrak{g} = \mathfrak{k} \oplus \mathfrak{p}$. Pick $\mathfrak{a} \subset \mathfrak{p}$ a maximal, abelian subalgebra and let $A$ be the corresponding Lie subgroup of $G$; writing $\exp$ for the exponential map, we have $\exp(\mathfrak{a}) = A$. Under the adjoint action, the elements of $\mathfrak{a}$  are (jointly) diagonalizable and the non-zero eigenvalues are given by the roots of $\mathfrak{g}$. Pick a set of positive roots, and let $\mathfrak{n}^+$ denote the sum of the corresponding eigenspaces; it is a nilpotent subalgebra. Let $U^+$ be the corresponding Lie subgroup of $G$. The Iwasawa decomposition then states that 
\begin{eqnarray}
G=KAU^{+}=K\exp(\mathfrak{a})U^{+}
\end{eqnarray}
that is, each $g\in G$ can be written uniquely as $g=kau=k\exp(h)u$ for $k\in K$,$u\in U^{+}$, and $a=\exp(h)$, $h\in \mathfrak{a}$. Let $\mathfrak{a}_{\mathbb{C}}^{\ast}$ be the dual of the complexification $\mathfrak{a}_{\mathbb{C}}:= \mathbb{C} \otimes_{\mathbb{R} } \mathfrak{a}$. Let $\Delta_{0}\subset \mathfrak{a}_{\mathbb{C}}^{\ast}$ be the set of roots, $\Delta_{0,+}$ be the set of positive roots and $\Pi_{0}$ be set of simple roots. Let $A^{+}\subset A$ be the cone of dominant elements of $A$. Next, suppose $v\colon\mathfrak{a}\longrightarrow \mathbb{C}$ is a function. Associated with this function, we define a character $\phi_{v}\colon G\longrightarrow \mathbb{C}^{\ast}$ as $\phi_{v}(g)=e^{\langle v, h\rangle}$, where $g\in G$ has the Iwasawa decomposition $g=k\exp(h)u$ for some $k\in K$, $u\in U^{+}$ and $\exp(h)\in A$ with $h\in \mathfrak{a}$. Let $\rho=\frac{1}{2}\sum_{\alpha\in\Delta_{0,+}}\alpha$. A spherical function on $G$ associated with $v$ can be defined by the following integral (see corollary on page 61 in \cite{HC1})
\begin{eqnarray}\label{spfu}
f_{v}(g)&=&\int_{K}\phi_{v+\rho}(gk)dk,\; g\in G,
\end{eqnarray} 
where $dk$ is the normalized Haar measure on $K$.
Let $U^{-}$ be the unipotent group opposite to $U^{+}$. In Theorem 4 of {\it op. cit.}  the asymptotic behaviour of the spherical function $f_{v}$ was studied and Harish-Chandra showed
\begin{theorem}
Let $Re(iv)\in \mathfrak{a}^{\ast}$, then the limit ${\mathbf c}_{v}:=\lim_{a\overset{+}{\to}\infty}\frac{f_v(a)}{\phi_{v+\rho}(a)}$ exists and it is equal to
\begin{eqnarray}\label{GK1}
{\mathbf c}_{v}&=&\int_{U^{-}}\phi_{v+\rho}(u^{-})du^{-},
\end{eqnarray}
where $du^{-}$ is the Haar measure on $U^{-}$ normalized such that $\int_{U^{-}}\phi_{-2\rho}(u^{-})du^{-}=1$ and $a\overset{+}{\to}\infty$ means $a$ is made increasingly dominant in the cone of dominant elements $A^{+}$. 
\end{theorem}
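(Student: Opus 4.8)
The plan is to carry out the classical argument: convert the integral over $K$ defining $f_{v}$ into an integral over $U^{-}$, use the cocycle behaviour of the Iwasawa projection to pull out the factor $\phi_{v+\rho}(a)$, and then pass the limit $a\overset{+}{\to}\infty$ inside the resulting integral.

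First I would invoke the Iwasawa integration formula. Let $M=Z_{K}(A)$; for $u^{-}\in U^{-}$ write its Iwasawa decomposition $u^{-}=\kappa(u^{-})\exp\!\big(H(u^{-})\big)\,n(u^{-})$ with $\kappa(u^{-})\in K$, $H(u^{-})\in\mathfrak{a}$, $n(u^{-})\in U^{+}$. Then, for every continuous function $\psi$ on $K$ that is invariant under right translation by $M$,
\[
\int_{K}\psi(k)\,dk=\int_{U^{-}}\psi\!\big(\kappa(u^{-})\big)\,\phi_{-2\rho}(u^{-})\,du^{-},
\]
where $du^{-}$ is the Haar measure on $U^{-}$ that renders this an identity; taking $\psi\equiv 1$ shows this is exactly the normalization $\int_{U^{-}}\phi_{-2\rho}(u^{-})\,du^{-}=1$ of the statement. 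Because $M$ normalizes $U^{+}$ and commutes with $A$, the function $k\mapsto\phi_{v+\rho}(ak)$ is right $M$-invariant, so applying the formula to \eqref{spfu} gives $f_{v}(a)=\int_{U^{-}}\phi_{v+\rho}\!\big(a\,\kappa(u^{-})\big)\,\phi_{-2\rho}(u^{-})\,du^{-}$.

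Next I would compute the Iwasawa $\mathfrak{a}$-part of $a\,\kappa(u^{-})$. From $\kappa(u^{-})=u^{-}\,n(u^{-})^{-1}\exp\!\big(-H(u^{-})\big)$ together with the identities $H(gn)=H(g)$ for $n\in U^{+}$ and $H(ga)=H(g)+\log a$ for $a\in A$, one obtains
\[
H\!\big(a\,\kappa(u^{-})\big)=H(au^{-})-H(u^{-})=H\!\big(au^{-}a^{-1}\big)+\log a-H(u^{-}),
\]
hence $\phi_{v+\rho}\!\big(a\,\kappa(u^{-})\big)=\phi_{v+\rho}(a)\,\phi_{v+\rho}\!\big(au^{-}a^{-1}\big)\,\phi_{v+\rho}(u^{-})^{-1}$. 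Substituting and dividing by $\phi_{v+\rho}(a)$ turns $f_{v}(a)/\phi_{v+\rho}(a)$ into an integral over $U^{-}$ whose integrand is $\phi_{v+\rho}\!\big(au^{-}a^{-1}\big)$ times a fixed (i.e.\ $a$-independent) integrable density on $U^{-}$ — the density of the integral on the right of \eqref{GK1} — with all of the $a$-dependence carried by the factor $\phi_{v+\rho}\!\big(au^{-}a^{-1}\big)$. Since conjugation by an increasingly dominant $a\in A^{+}$ contracts every fixed $u^{-}$ to the identity, $\phi_{v+\rho}\!\big(au^{-}a^{-1}\big)\to 1$ and the integrand converges pointwise to that fixed density.

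The crux — the step I expect to be the real obstacle — is interchanging $\lim_{a\overset{+}{\to}\infty}$ with $\int_{U^{-}}$. The hypothesis $\mathrm{Re}(iv)\in\mathfrak{a}^{\ast}$ is precisely the range of $v$ for which the limiting density is absolutely integrable on $U^{-}$, i.e.\ for which \eqref{GK1} converges; one then has to produce a majorant of $\big|\phi_{v+\rho}\!\big(au^{-}a^{-1}\big)\big|=e^{\langle\mathrm{Re}(v+\rho),\,H(au^{-}a^{-1})\rangle}$ that is uniform in all sufficiently dominant $a$ and still integrable against that density. The needed analytic input is Harish-Chandra's quantitative control of the Iwasawa projection, to the effect that $H\!\big(au^{-}a^{-1}\big)$ returns to $0$ in a fashion bounded — uniformly in $a$ — by $H(u^{-})$; this also makes available a monotone-convergence argument in place of a global majorant. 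Granting this, dominated convergence gives ${\mathbf c}_{v}=\int_{U^{-}}\phi_{v+\rho}(u^{-})\,du^{-}$. One can alternatively follow Harish-Chandra's original route — $f_{v}$ satisfies the radial system attached to the algebra of invariant differential operators, whose solutions admit an asymptotic expansion along $A^{+}$ whose leading coefficient is ${\mathbf c}_{v}$ — but it is the integral argument above whose Kac–Moody counterpart the present paper develops, where the Gindikin–Karpelevich integral is no longer automatically convergent and the interchange step must be replaced by the finiteness theorems.
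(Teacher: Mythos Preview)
The paper does not prove this theorem at all: it is quoted in the introduction as a classical result of Harish-Chandra (``Theorem 4 of {\it op.\ cit.}'' referring to \cite{HC1}), with no argument supplied. So there is no proof in the paper to compare your proposal against.

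Your sketch is a faithful outline of the classical proof --- the $K\to U^{-}$ change of variables, the cocycle identity for the Iwasawa projection, and dominated convergence under conjugation by an increasingly dominant $a$ --- and you correctly flag the interchange of limit and integral as the analytic crux. Your closing remark that the present paper's contribution is the Kac--Moody analogue of exactly this limit-passage (where Haar measure is unavailable and the interchange is replaced by the finiteness Theorems~\ref{maintheorem1}--\ref{maintheorem3}) is also accurate.
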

The function ${\mathbf c}_{v}$ is known as the Harish-Chandra's ${\mathbf c}$-{\it function}.
 The integral on the right hand side of (\ref{GK1}) is called the Gindikin-Karpelevich integral so named after S. Gindikin and F. Karpelevic, who explicitly evaluated it in \cite{GiK}. Following Bhanu Murthy's inductive method from \cite{B}, a formula for the integral (\ref{GK1}) in terms of a product of certain gamma functions was obtained, which is now known as the {\it Gindikin-Karpelvich formula}. 

\subsection{}\label{sub3} 
There is also a non-archimedean analogue of the construction of the previous subsection studied by R. P. Langlands in his monograph \enquote{Euler Products} and I. G. Macdonald's notes \enquote{Spherical Functions on a Group of $p$-Adic Type}. To describe it, let $\mathcal{K}$ be a non-archimedean local field with ring of the integers $\mathcal{O}$. Pick $\pi$  a uniformizing element and $\mathrm{k}= \mathcal{O}/\pi\mathcal{O}$ be the finite residue field of cardinality $q$. Suppose $G=G(\mathcal{K})$ is a split, simply-connected Chevalley group over $\mathcal{K}$. We denote the integral subgroup $G(\mathcal{O})$ by $K$; this group is a non-archimedean analogue of maximal compact subgroup. Let $H$ be a Cartan subgroup and $H_{\mathcal{O}}=H\cap K$. The quotient group $A:=H/H_{\mathcal{O}}$ can be identified with the coweight lattice $\Lambda^{\vee}$ (which is equal to the coroot lattice $Q^{\vee}$ since $G$ is simply-connected) via the map $\mu^{\vee}\mapsto \pi^{\mu^{\vee}}$. The Iwasawa decomposition in this context states that $G=\cup_{\mu^{\vee}\in\Lambda^{\vee}}K\pi^{\mu^{\vee}}U^{+}$, that is, every $g\in G$ can be written as $g=k\pi^{\mu^{\vee}}u$ with $\mu^{\vee}$ uniquely determined by $g$ (note that $k\in K$ and $u\in U^{+}$ are not uniquely determined). For a map $v\colon A\longrightarrow \mathbb{C}^{\ast}$ we can define a function 
$$\phi_{v}\colon G\longrightarrow \mathbb{C}^{\ast}$$
as $\phi_{v}(g)=q^{ \langle v, \mu^{\vee}\rangle}$, if $g\in  G$ has Iwasawa decomposition as above $g\in K\pi^{\mu^{\vee}}U^{+}$, $\mu^{\vee}\in \Lambda^{\vee}$. The spherical function on $G$ can be defined as in (\ref{spfu})
\begin{eqnarray}
f_{v}(g)=&\int_{K}\phi_{v+\rho}(gk)dk,\label{padicintegra}
\end{eqnarray}
where $dk$ is the normalized Haar measure on $K$ so that $\int_{K}dk=1$. In an analogy with the archimedean case, by taking the limit $\lambda^{\vee}\rightarrow \infty$ in the dominant cone, the integral on the right hand side of (\ref{padicintegra}) over $K$ can be shifted to an integral over $U^{-}$, that is
\begin{eqnarray}\label{GK2}
{\bf c}_{v}:=\lim_{\lambda^{\vee}\overset{+}{\to}\infty}\frac{f_{v}(\pi^{\lambda^{\vee}})}{\phi_{v+\rho}(\pi^{\lambda^{\vee}})}&=&\int_{U^{-}}\phi_{v+\rho}(u^{-})du^{-},
\end{eqnarray}
where $du^{-}$ is an appropriately normalized Haar measure on $U^{-}$. Following the same inductive method of  Gindikin and Karpelevich, Macdonald in \cite[P. 77] {Mac} found the following formula for the above integral.
\begin{eqnarray}
{\bf c}_{v}&=&\prod _{\alpha^{\vee}\in \Delta_{0,+}^{\vee}}\frac{1-q^{-1-v( \alpha^{\vee})}}{1-q^{-v( \alpha^{\vee})}},
\end{eqnarray}
where $\Delta_{0,+}^{\vee}$ is the set of positive coroots. 
\subsection{}\label{sub4} 
Since Haar measures do not exist in general Kac-Moody groups over non-archimedean local fields, we need an algebraic formulation for the constructions given in the Subsection~\ref{sub3}. 
 Using the properties of $f_{v}$, Cartan and Iwasawa decompositions, one obtains the following formal version (i.e. valued in $\mathbb{C}[\Lambda^{\vee}]$ rather than $\mathbb{C}$) of the integral (\ref{padicintegra})
 
\begin{eqnarray}\label{stakemap}
\mathcal{S}_{\lambda^{\vee}}&:=&\sum_{\mu^{\vee}\in \Lambda^{\vee}}|K\backslash K\pi^{\mu^{\vee}}U^{+}\cap K\pi^{\lambda^{\vee}}K|q^{\langle \rho, \mu^{\vee}\rangle}e^{\mu^{\vee}},
\end{eqnarray}
such that the value of function $f_{v}(g)$ for $g\in K\pi^{\lambda^{\vee}}K$ satisfying $gk\in  K\pi^{\mu^{\vee}}U^{+}$  is equal to $f_{v}(g)= \mathrm{ev}_{v}(\mathcal{S}_{\lambda^{\vee}}),$ where $\mathrm{ev}_{v}\colon \mathbb{C}[\Lambda^{\vee}]\longrightarrow \mathbb{C}$ is the map defined as $e^{\mu^{\vee}}\mapsto q^{\langle v, \mu^{\vee}\rangle}$. Similarly, a formal version of the Gindikin-Karpelevich integral (\ref{GK2}) is written as:
\begin{eqnarray}
\mathscr{G}_{\lambda^{\vee}}&:=&\sum_{\mu^{\vee}\in \Lambda^{\vee}}|K\backslash K\pi^{\lambda^{\vee}-\mu^{\vee}}U^{+}\cap K\pi^{\lambda^{\vee}}U^{-}  |q^{\langle \rho, \lambda^{\vee}-\mu^{\vee}\rangle}e^{\lambda^{\vee}-\mu^{\vee}}.
\end{eqnarray}
Let us immediately notice the following \enquote{homogenity} property of the sum $\mathscr{G}_{\lambda^{\vee}}$, which shows that it suffices to obtain a formula for $\mathscr{G}_{0}$. 
\begin{lemma}\label{GKrem}
The sum $\mathscr{G}_{\lambda^{\vee}}$ satisfies
\begin{eqnarray}
\mathscr{G}_{\lambda^{\vee}}= q^{\langle \rho, \lambda^{\vee}\rangle}e^{\lambda^{\vee}}\mathscr{G}_{0}
\end{eqnarray}
\end{lemma}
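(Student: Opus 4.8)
The plan is to prove the identity by exhibiting an explicit bijection between the double-coset sets indexing $\mathscr{G}_{\lambda^\vee}$ and those indexing $\mathscr{G}_0$, induced by left translation by $\pi^{\lambda^\vee}$. Concretely, fix $\lambda^\vee$ and reparametrize the sum defining $\mathscr{G}_{\lambda^\vee}$: writing $\nu^\vee := \lambda^\vee - \mu^\vee$, the general term is $|K\backslash K\pi^{\nu^\vee}U^+ \cap K\pi^{\lambda^\vee}U^-|\, q^{\langle\rho,\nu^\vee\rangle} e^{\nu^\vee}$, and as $\mu^\vee$ ranges over $\Lambda^\vee$ so does $\nu^\vee$. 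The claim will follow once I show
\begin{equation*}
|K\backslash K\pi^{\nu^\vee}U^+ \cap K\pi^{\lambda^\vee}U^-| \;=\; |K\backslash K\pi^{\nu^\vee - \lambda^\vee}U^+ \cap K\,U^-|
\end{equation*}
for every $\nu^\vee, \lambda^\vee \in \Lambda^\vee$; indeed, substituting this and reindexing by $\sigma^\vee := \nu^\vee - \lambda^\vee$ turns $\mathscr{G}_{\lambda^\vee}$ into $\sum_{\sigma^\vee} |K\backslash K\pi^{\sigma^\vee}U^+ \cap K\,U^-|\, q^{\langle\rho, \sigma^\vee + \lambda^\vee\rangle} e^{\sigma^\vee + \lambda^\vee} = q^{\langle\rho,\lambda^\vee\rangle} e^{\lambda^\vee}\,\mathscr{G}_0$, since $\langle\rho,\cdot\rangle$ is additive and $e^{\sigma^\vee+\lambda^\vee} = e^{\sigma^\vee} e^{\lambda^\vee}$ in the group algebra $\mathbb{C}[\Lambda^\vee]$.

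For the displayed set identity, the key point is that left multiplication by $g_0 := \pi^{-\lambda^\vee}$ is a bijection of $G$ that carries $K\pi^{\nu^\vee}U^+$ to $K\pi^{\nu^\vee}U^+$ only after correcting: more precisely, since $\pi^{-\lambda^\vee}$ lies in the Cartan subgroup $H$ which normalizes both $U^+$ and $U^-$, and $\pi^{-\lambda^\vee}$ acts trivially on $H/H_{\mathcal{O}}$ up to the shift by $-\lambda^\vee$, one has $\pi^{-\lambda^\vee}\big(K\pi^{\nu^\vee}U^+\big) = K\pi^{\nu^\vee - \lambda^\vee}U^+$ and $\pi^{-\lambda^\vee}\big(K\pi^{\lambda^\vee}U^-\big) = \pi^{-\lambda^\vee}K\pi^{\lambda^\vee}\cdot U^-$. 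The first equality needs the Iwasawa-type fact that $K\pi^{\nu^\vee}U^+$ as a subset of $G$ only depends on $\nu^\vee$ through the Iwasawa coordinate and that $\pi^{-\lambda^\vee} K \pi^{\nu^\vee} U^+ = (\pi^{-\lambda^\vee} K \pi^{\lambda^\vee})\,\pi^{\nu^\vee - \lambda^\vee} U^+$; since $\pi^{-\lambda^\vee}K\pi^{\lambda^\vee}$ is a conjugate of $K$ contained in $G$ but not equal to $K$, I instead argue directly on the level of $K$-orbits: left multiplication by $\pi^{-\lambda^\vee}$ gives a bijection $K\backslash X \to (\pi^{-\lambda^\vee}K\pi^{\lambda^\vee})\backslash \pi^{-\lambda^\vee}X$ for any $K$-stable... — cleaner is to multiply on the \emph{left} by $\pi^{-\lambda^\vee}$ and use that the counting of $K$-orbits is invariant under the automorphism of $G$-sets given by left translation once one also conjugates the acting copy of $K$; but $K$ is not normal, so the honest route is to observe $K\backslash(K\pi^{\nu^\vee}U^+ \cap K\pi^{\lambda^\vee}U^-)$ maps bijectively, via $Kx \mapsto K(\pi^{-\lambda^\vee}x)$, provided $\pi^{-\lambda^\vee} K \pi^{\lambda^\vee} = K$, which holds because $\pi^{\lambda^\vee}$ normalizes nothing — so the correct and simplest argument is the one already used implicitly for $\mathcal{S}_{\lambda^\vee}$ in the literature: intersect instead inside $K\pi^{\lambda^\vee}K$-free coordinates.

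Given that subtlety, the approach I would actually carry out is: (1) recall from the Iwasawa decomposition that $K\pi^{\mu^\vee}U^+ \cap K\pi^{\nu^\vee}U^+ = \emptyset$ for $\mu^\vee \neq \nu^\vee$, so the sets $K\backslash(K\pi^{\lambda^\vee-\mu^\vee}U^+ \cap K\pi^{\lambda^\vee}U^-)$ partition $K\backslash K\pi^{\lambda^\vee}U^-$ as $\mu^\vee$ varies; (2) show that the map $Ku^- \mapsto K\pi^{\lambda^\vee} u^-$ is a well-defined bijection $K\backslash K U^- \to K\backslash K\pi^{\lambda^\vee}U^-$, using that $\pi^{\lambda^\vee}$ normalizes $U^-$ so $K\pi^{\lambda^\vee}U^- = K\pi^{\lambda^\vee}U^-(\pi^{\lambda^\vee})^{-1}\pi^{\lambda^\vee} = K U^- \pi^{\lambda^\vee}$, hence left $K$-orbits on the two sides biject via right translation by $\pi^{\lambda^\vee}$; (3) track the Iwasawa coordinate under this bijection: if $u^- \in K\pi^{\sigma^\vee}U^+$ then $u^-\pi^{\lambda^\vee} \in K\pi^{\sigma^\vee}U^+\pi^{\lambda^\vee} = K\pi^{\sigma^\vee+\lambda^\vee}U^+$ (again since $\pi^{\lambda^\vee}$ normalizes $U^+$), which is exactly the shift $\mu^\vee \leftrightarrow \sigma^\vee$ needed; (4) assemble the three steps to match, term by term, the $\sigma^\vee$-term of $q^{\langle\rho,\lambda^\vee\rangle}e^{\lambda^\vee}\mathscr{G}_0$ with the $(\lambda^\vee-\mu^\vee)$-term of $\mathscr{G}_{\lambda^\vee}$. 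The main obstacle is step (2)–(3): making the bookkeeping of right-translation by $\pi^{\lambda^\vee}$ compatible with left $K$-cosets and with the Iwasawa coordinate simultaneously, and in the Kac-Moody setting verifying that the relevant products of subgroups ($K$, $U^\pm$, the torus) behave well enough — in particular that $\pi^{\lambda^\vee}$ normalizes $U^\pm$ and that the decomposition $K\pi^{\lambda^\vee}U^- = KU^-\pi^{\lambda^\vee}$ is literally an equality of subsets of the (ind-)group $G$ — which is where one must invoke the structure theory set up earlier rather than appeal to Haar-measure arguments.
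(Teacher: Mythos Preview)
Your final argument via steps (1)--(4) is correct and is exactly the mechanism the paper relies on: right translation by $\pi^{-\lambda^\vee}$ gives a bijection
\[
K\pi^{\lambda^{\vee}}U^{-}\cap K\pi^{\mu^{\vee}}U^{+} \;\longrightarrow\; KU^{-}\cap K\pi^{\mu^{\vee}-\lambda^{\vee}}U^{+}
\]
compatible with left $K$-orbits, since the torus element $\pi^{\lambda^\vee}$ normalizes both $U^{+}$ and $U^{-}$. The paper does not spell out a proof of the lemma in~\S1.4 but uses precisely this bijection explicitly in~\S\ref{secapplication} when deducing Gindikin--Karpelevich Finiteness.

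Your opening attempt via \emph{left} translation by $\pi^{-\lambda^\vee}$ is a dead end for exactly the reason you identify ($K$ is not normalized by the torus), and you should excise that paragraph entirely rather than leave it as a visible false start. The worry you flag at the end---that one must check $\pi^{\lambda^\vee}$ normalizes $U^{\pm}$ in the Kac--Moody setting---is not a genuine obstacle: it follows immediately from the definition of the real root subgroups $U_\alpha$ and the action of $H$ on them, valid in the Carbone--Garland group just as in the finite-dimensional case.
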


The sum $\mathcal{S}_{\lambda^{\vee}}$ is connected with the {\it Satake map}, 
 $$\mathcal{S}:\mathscr{H}\longrightarrow \mathbb{C}[\Lambda^{\vee}]^{W},$$
 where the notation is as follows: $\mathscr{H}$ is the space of complex valued, compactly supported $K$-bi-invariant functions on $G$ with basis consisting of the characteristic functions $h_{\lambda^{\vee}}=\chi_{K\pi^{\lambda^{\vee}}K}$ of $K\pi^{\lambda^{\vee}}K$ for all $\lambda^{\vee}\in \Lambda^{\vee}_{+}$; $W$ is the Weyl group; $\mathbb{C}[\Lambda^{\vee}]$ is the group algebra of $\Lambda^{\vee}$; and $\mathbb{C}[\Lambda^{\vee}]^{W}$ is its $W$-invariant subspace. 
 
 For $\lambda^{\vee}\in \Lambda^{\vee}_{+}$, the Satake map $\mathcal{S}$ sends $h_{\lambda^{\vee}}$ to $\mathcal{S}_{\lambda^{\vee}}$. In \cite{Mac},  I. G. Macdonald determined an explicit formula for $\mathcal{S}_{\lambda^{\vee}}$
\begin{eqnarray}\label{satakeform}
\mathcal{S}_{\lambda^{\vee}}=\frac{q^{\langle \rho, \lambda^{\vee}\rangle}}{W_{\lambda^{\vee}}(q^{-1})}\sum_{w\in W}w(\Upsilon)e^{ w\lambda^{\vee}},
\end{eqnarray}
where $\Upsilon=\prod_{\alpha^{\vee}\in \Delta^{\vee}_{0,+}}\frac{1-q^{-1}e^{-\alpha^{\vee}}}{1-e^{-\alpha^{\vee}}}$ is a rational expression from $\mathbb{C}_{q}[\Lambda^{\vee}]:=\mathbb{C}[q, q^{-1}]\otimes_{\mathbb{C}}\mathbb{C}[\Lambda^{\vee}]$; and $W_{\lambda^{\vee}}(q^{-1})=\sum_{\sigma\in W_{\lambda^{\vee}}}q^{-\ell(\sigma)}$
 is the {\it Poincare polynomial} of the stabilizer $W_{\lambda^{\vee}}\subset W$ of $\lambda^{\vee}$, where $\ell\colon W\longrightarrow \mathbb{Z}$ denotes the length function on $W$. The formal analogue of the limit (\ref{GK2}) can be stated as:
\begin{theorem}[Approximation Theorem]\label{maintheorem1} 
For each $\mu^{\vee}\in \Lambda^{\vee}$, there exists $\lambda^{\vee}_{0}\in \Lambda^{\vee}_{+}$ regular such that for all $\lambda^{\vee}>\lambda^{\vee}_{0}$, we have
\begin{eqnarray}
K\pi^{\lambda^{\vee}-\mu^{\vee}}U^{+}\cap K\pi^{\lambda^{\vee}}U^{-}= K\pi^{\lambda^{\vee}-\mu^{\vee}}U^{+}\cap K\pi^{\lambda^{\vee}}K.
\end{eqnarray}
\end{theorem}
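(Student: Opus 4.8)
The plan is to recast the asserted equality, which takes place in $K\backslash G$, as a statement about the components of the Iwasawa and Cartan decompositions of a single element, and then to pin down the relevant component by a rank-one computation. For $g\in G$ write $\mu_{+}(g)\in\Lambda^{\vee}$ for its $U^{+}$-Iwasawa component (so $g\in K\pi^{\mu_{+}(g)}U^{+}$), $\nu(g)\in\Lambda^{\vee}$ for its $U^{-}$-Iwasawa component (so $g\in K\pi^{\nu(g)}U^{-}$), and, when $g$ lies in a Cartan cell, $\sigma(g)\in\Lambda^{\vee}_{+}$ for its Cartan component (so $g\in K\pi^{\sigma(g)}K$). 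Both sides of the identity are left-$K$-invariant and are contained in $\{\,g\in G:\mu_{+}(g)=\lambda^{\vee}-\mu^{\vee}\,\}$. Hence it suffices to prove: for $\lambda^{\vee}$ deep enough in the dominant cone and regular, with the required depth depending only on $\mu^{\vee}$, every $g$ with $\mu_{+}(g)=\lambda^{\vee}-\mu^{\vee}$ satisfies $\sigma(g)=\lambda^{\vee}$ if and only if $\nu(g)=\lambda^{\vee}$.

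First I would isolate the standard compatibilities among the three decompositions. Writing $\leq$ for the partial order $x\leq y\iff y-x\in Q^{\vee}_{+}:=\sum_{i}\mathbb{Z}_{\geq0}\alpha_{i}^{\vee}$, one has $\mu_{+}(g)\leq\nu(g)$ for every $g$, and $\mu_{+}(g)\leq\sigma(g)$, $\nu(g)\leq\sigma(g)$ whenever $g$ lies in a Cartan cell. In rank one all three are immediate from the $2\times2$ matrix form of the Iwasawa and Cartan decompositions; in general they reduce to rank one, or may be extracted from the combinatorics of Iwasawa cells underlying the masure picture of \cite{GR2}. Feeding in $\mu_{+}(g)=\lambda^{\vee}-\mu^{\vee}$ together with either $\sigma(g)=\lambda^{\vee}$ or $\nu(g)=\lambda^{\vee}$, these bounds squeeze the remaining component into the finite set $\{\lambda^{\vee}-\xi:\xi\in Q^{\vee}_{+},\ \xi\leq\mu^{\vee}\}$; moreover, since $\lambda^{\vee}$ is deep, every such $\lambda^{\vee}-\xi$ is itself dominant. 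The whole content of the theorem is then to rule out every $\xi\neq0$.

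The heart of the matter — and the step I expect to be the main obstacle — is this exclusion, which I would carry out by localizing to a rank-one subgroup. Take $g\in K\pi^{\lambda^{\vee}-\mu^{\vee}}U^{+}\cap K\pi^{\lambda^{\vee}}K$ and suppose, for contradiction, that $\nu:=\nu(g)\neq\lambda^{\vee}$, so $\lambda^{\vee}-\nu\in Q^{\vee}_{+}\setminus\{0\}$ and $\nu$ is dominant. Writing $g=k\pi^{\nu}u^{-}$ with $k\in K$, $u^{-}\in U^{-}$, the fact that $g$ lands in the Cartan cell of the strictly larger $\lambda^{\vee}$ forces $u^{-}\notin U^{-}\cap K$: if $u^{-}\in U^{-}\cap K$ then $g\in K\pi^{\nu}K$ with $\nu$ dominant, whence $\nu=\lambda^{\vee}$. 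So some negative root coordinate $u_{-\beta}(t)$ of $u^{-}$ has $v(t)\leq -1$; conjugating across $\pi^{\nu}$ multiplies it by $\pi^{-\langle\beta,\nu\rangle}$ with $\langle\beta,\nu\rangle\geq0$, so it is only made more negative, and projecting to the rank-one subgroup $G_{\beta}\cong\mathrm{SL}_{2}$ (or $\mathrm{PGL}_{2}$) the elementary $2\times2$ computation shows that $\langle\beta,\lambda^{\vee}-\mu_{+}(g)\rangle\geq 2\langle\beta,\lambda^{\vee}\rangle$; once $\lambda^{\vee}$ is deep enough that $2\langle\beta,\lambda^{\vee}\rangle>\langle\beta,\mu^{\vee}\rangle$ for every positive root $\beta$ — which reduces to a condition on the simple roots $\alpha_{i}$ alone — this contradicts $\mu_{+}(g)=\lambda^{\vee}-\mu^{\vee}$. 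Hence $\nu(g)=\lambda^{\vee}$, proving "$\supseteq$". The inclusion "$\subseteq$" is symmetric: if $g\in K\pi^{\lambda^{\vee}-\mu^{\vee}}U^{+}\cap K\pi^{\lambda^{\vee}}U^{-}$ then $\nu(g)=\lambda^{\vee}$, so $\sigma(g)\geq\lambda^{\vee}$; if $\sigma(g)>\lambda^{\vee}$ one writes $g=k\pi^{\lambda^{\vee}}u^{-}$ with $u^{-}\notin U^{-}\cap K$ and runs the same rank-one estimate across $\pi^{\lambda^{\vee}}$ to contradict $\mu_{+}(g)=\lambda^{\vee}-\mu^{\vee}$; hence $\sigma(g)=\lambda^{\vee}$ and $g\in K\pi^{\lambda^{\vee}}K$.

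The genuine difficulty, special to Kac–Moody groups, is that $U^{\pm}$ are pro-(infinite-dimensional) prounipotent, so the offending root coordinate above can a priori sit in any of the infinitely many real or imaginary root groups, and the rank-one projections interact through commutators. To control this I would use the height filtration of $U^{\pm}$: commutators strictly raise height, so working in the quotient by root groups of height $>h$ for the smallest $h$ at which $u^{-}$ is non-integral makes the projection clean, and for that $h$ only a bounded window of root groups is involved; imaginary root groups are handled by the same amplification estimate, since $\langle\delta,\lambda^{\vee}\rangle\geq0$ for $\delta$ a positive imaginary root and $\lambda^{\vee}$ dominant. This boundedness is precisely the finiteness phenomenon that Gaussent–Rousseau \cite{GR2} and H\'ebert \cite{Aug} encode geometrically via masures, and whose affine case is treated in \cite{BGKP}; the remaining work is to make the window explicit in terms of $\mu^{\vee}$, to fix the regular $\lambda^{\vee}_{0}$ accordingly, and to verify that the rank-one estimates assemble consistently over the simple roots.
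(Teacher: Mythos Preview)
Your proposal takes a fundamentally different route from the paper and has a real gap at its central step.

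The paper never works root-by-root. Both inclusions are proved by letting group elements act on the highest weight vector $v_{\rho}$ of the integrable module $V^{\rho}$ and comparing norms. For the inclusion $K\pi^{\lambda^{\vee}}U^{-}\cap K\pi^{\lambda^{\vee}-\mu^{\vee}}U^{+}\subset K\pi^{\lambda^{\vee}}K$, the key is Lemma~\ref{apptheL2}: if $\pi^{\lambda^{\vee}}u^{-}\in K\pi^{\lambda^{\vee}-\mu^{\vee}}U^{+}$ and the weight-$(\rho-\gamma)$ component of $u^{-}v_{\rho}$ is non-integral, then $\langle\gamma,\lambda^{\vee}\rangle<\langle\rho,\mu^{\vee}\rangle$. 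Since $\lambda^{\vee}$ is regular dominant this bounds the simple-root coefficients of $\gamma$ by a constant depending only on $\mu^{\vee}$, so the bad weights lie in a finite set, and for $\lambda^{\vee}$ deep enough that set is empty, forcing $u^{-}\in U^{-}_{\mathcal{O}}\subset K$. The reverse inclusion goes through the Iwahori decomposition $K=\bigcup_{w}I^{+}wI^{+}$ and a second norm estimate on $v_{\rho}$ that rules out every $w\neq1$. Thus $V^{\rho}$ is a single global object detecting non-integrality of $u^{-}$ through the weight components of $u^{-}v_{\rho}$, packaging all root directions at once.

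Your argument instead hinges on the sentence ``projecting to the rank-one subgroup $G_{\beta}\cong\mathrm{SL}_{2}$, the elementary $2\times2$ computation shows $\langle\beta,\lambda^{\vee}-\mu_{+}(g)\rangle\geq 2\langle\beta,\lambda^{\vee}\rangle$''. This is where it breaks. In a Kac--Moody group there is no retraction $G\to G_{\beta}$ compatible with the Iwasawa decomposition, so one cannot read off $\langle\beta,\mu_{+}(g)\rangle$ from the $\beta$-coordinate of $u^{-}$ alone: when $u^{-}$ is written as a product of root-group elements, the other factors feed back into the $\beta$-direction of $\mu_{+}(g)$ through iterated commutators, and there are infinitely many such contributions a priori. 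Your height-filtration remedy does not close this gap, because $\mu_{+}(g)$ is computed in $G$ and not in a finite-height quotient; passing to the quotient alters it, and controlling the discrepancy uniformly over all possible $\beta$ requires exactly the kind of global bound that the module $V^{\rho}$ supplies. A secondary issue is that the preliminary comparisons you invoke, $\mu_{+}(g)\leq\sigma(g)$ and $\nu(g)\leq\sigma(g)$, are themselves nontrivial in the Kac--Moody setting and do not simply ``reduce to rank one''; in the paper the analogous inequalities are again obtained via the action on $v_{\rho}$.
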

\noindent For finite dimensional groups, a proof of this result can be found in \cite[Proposition 3.6 (ii)]{BG}. By using Lemma~\ref{GKrem} and Theorem~\ref{maintheorem1}, $\mathscr{G}_{0}$ can be expressed as 
\begin{eqnarray}\label{Satakelimit}
\mathscr{G}_{0}=\lim_{\lambda^{\vee}\overset{+}{\to}\infty} \frac{\mathcal{S}_{\lambda^{\vee}}}{q^{\langle \rho, \lambda^{\vee}\rangle}e^{\lambda^{\vee}}},
\end{eqnarray}\label{GKfin23}
where $\lambda^{\vee}\overset{+}{\to}\infty$ indicates that the regular dominant coweight $\lambda^{\vee}$ is approaching to infinity while remaining within the regular dominant cone. By using the expression (\ref {satakeform}), one can compute the limit on the right hand side of (\ref{Satakelimit}) to obtain 
\begin{eqnarray}\label{Gkformulafin21}
\mathscr{G}_{0}=\Upsilon.
\end{eqnarray}
This strategy to compute the Gindikin-Karpelevich formula is stated and generalized for affine Kac-Moody groups in \cite{BGKP}.

\subsection{}\label{sub5} Suppose, now $G$ is a general Kac-Moody group over a non-archimedean local field $\mathcal{K}$. To consider $\mathscr{G}_{\lambda^{\vee}}$ and $\mathcal{S}_{\lambda^{\vee}}$ associated with $G$ and compute a formula for $\mathscr{G}_{\lambda^{\vee}}$, the first challenge is to show that these sums are well defined when $G$ is not of finite type, and an infinite dimensional version of Theorem~\ref{maintheorem1} holds. 

For $\mathscr{G}_{\lambda^{\vee}}$, one needs to prove the following,
\begin{theorem}[Gindikin-Karpelevich Finiteness]\label{maintheorem2} 
For $\lambda^{\vee}, \mu^{\vee}\in \Lambda^{\vee}$, the set $K\backslash K\pi^{\mu^{\vee}}U^{+}\cap K\pi^{\lambda^{\vee}}U^{-}$ is finite. Moreover, it is empty unless $ \mu^{\vee}\le \lambda^{\vee}$.
\end{theorem}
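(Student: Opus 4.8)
The plan is to reduce the Gindikin-Karpelevich finiteness statement to a statement about the combinatorics of the positive unipotent group $U^+$ acting on the affine flag variety, controlled by the Iwahori-Bruhat decomposition, together with a bounding argument coming from the Cartan decomposition. First I would fix coordinates: write $g \in K\pi^{\mu^\vee}U^+ \cap K\pi^{\lambda^\vee}U^-$, so modulo $K$ on the left we may take $g = \pi^{\mu^\vee}u^+ = k\pi^{\lambda^\vee}u^-$ for some $u^+ \in U^+$, $u^- \in U^-$, $k \in K$. The emptiness claim $\mu^\vee \le \lambda^\vee$ should follow from the highest-weight theory: pairing against a dominant integral weight $\Lambda$ and using that the coordinate function $v_\Lambda \mapsto \langle \pi, g v_\Lambda\rangle$ (in an integrable highest-weight representation $L(\Lambda)$ of $G$) has valuation governed by $-\langle\Lambda,\mu^\vee\rangle$ when computed via the $U^+$-decomposition, and by $\ge -\langle\Lambda,\lambda^\vee\rangle$ when computed via $K\pi^{\lambda^\vee}U^-$ (since $U^-$ fixes the highest-weight line modulo lower terms and $K$ preserves the integral lattice). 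Comparing the two valuations for all dominant $\Lambda$ yields $\lambda^\vee - \mu^\vee \in Q^\vee_+$, i.e. $\mu^\vee \le \lambda^\vee$.

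For finiteness, the main step is to control the image of the set $K\pi^{\mu^\vee}U^+ \cap K\pi^{\lambda^\vee}U^-$ in $K\backslash G$. I would stratify $U^-$ (or rather the relevant finite-type truncations, using that any single element of $U^-$ lies in the subgroup generated by finitely many real root subgroups) and observe that the constraint $\pi^{\mu^\vee}u^+ \in K\pi^{\lambda^\vee}U^-$ forces $u^-$ to have bounded denominators: more precisely, writing $u^- = \prod x_{-\beta}(c_\beta)$ over positive real roots $\beta$, the exponents $v(c_\beta)$ are bounded below in terms of $\lambda^\vee$ and $\mu^\vee$, because an unbounded negative valuation in some root direction would push $g$ out of the coset $K\pi^{\mu^\vee}U^+$ (again detected in a highest-weight module). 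Since each $c_\beta$ ranges over $\mathcal{K}$ but only its class modulo $\mathcal{O}$ matters after the left $K$-quotient, and the valuations are bounded, there are only finitely many possibilities for the relevant truncation of $u^-$; the coweight $\mu^\vee$ attached to each such $g$ via the Iwasawa decomposition $g \in K\pi^{\mu^\vee}U^+$ is then determined, and finitely many $\mu^\vee$ occur. This is exactly the pattern of the Braverman-Garland-Kazhdan-Patnaik argument in \cite{BGKP}, and the point of the present paper is that it extends beyond the affine case.

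The hard part will be making the boundedness argument uniform and genuinely finite in the general Kac-Moody setting, where $U^+$ and $U^-$ are pro-groups with infinitely many real (and imaginary) root subgroups and there is no Haar measure or honest algebraic group structure to lean on. Concretely, one must (i) show that only finitely many root subgroups $x_{-\beta}$ can have nontrivial contribution once $\mu^\vee$ and $\lambda^\vee$ are fixed — this is where the partial order $\mu^\vee \le \lambda^\vee$ and the fact that $\lambda^\vee - \mu^\vee$ has finite height enter decisively, since $\beta$ must satisfy a height constraint relative to $\lambda^\vee - \mu^\vee$; and (ii) handle the imaginary root directions and the failure of the group generated by finitely many root subgroups to be all of $U^-$. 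I would address (i) by a valuation/weight estimate in a single well-chosen integrable module (taking $\Lambda$ in the interior of the dominant cone so that all coweight inequalities are strict), and (ii) by the standard device of passing to the completion $\widehat{U}^-$ and noting that modulo the subgroup of elements supported on roots of height $> N$ (which lands in $K$ after conjugating by $\pi^{\lambda^\vee}$, for $N$ large depending on $\lambda^\vee$) everything is governed by a finite-type parabolic, to which the classical finite-dimensional result \cite[Proposition 3.6]{BG} applies. Assembling these gives both the finiteness and the support condition.
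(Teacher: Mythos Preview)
Your emptiness argument via valuations in highest-weight modules is fine and is indeed how \cite{BGKP} handles the inequality $\mu^{\vee}\le\lambda^{\vee}$; the paper simply cites that argument and does not reprove it.

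For the finiteness, however, your proposal and the paper diverge substantially. You outline a direct attack on $U^{-}$: bound the valuations of the root-group coordinates of $u^{-}$, truncate at height $N$, pass to a completion $\widehat{U}^{-}$, and reduce to a finite-type parabolic. This is essentially the shape of the affine argument in \cite{BGKP}, and you assert that ``the point of the present paper is that it extends beyond the affine case.'' But that is precisely what the paper does \emph{not} do. The paper's route is indirect: first prove the Approximation Theorem (Theorem~\ref{maintheorem1}) and the Weak Spherical Finiteness (Theorem~\ref{maintheorem4}) independently---the latter via an Iwahori-level decomposition, Joseph's lemma to bound the contributing $w\in W$, and a recursion for the integrals $I_{w,\lambda^{\vee}}$ in terms of Demazure--Lusztig operators $\mathbf{T}_{w}$---and then combine them. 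The Approximation Theorem gives, for $\lambda^{\vee}$ sufficiently dominant, an equality $K\pi^{\lambda^{\vee}}U^{-}\cap K\pi^{\mu^{\vee}}U^{+}=K\pi^{\lambda^{\vee}}K\cap K\pi^{\mu^{\vee}}U^{+}$; Weak Spherical Finiteness makes the right-hand side finite modulo $K$; and the homogeneity of Lemma~\ref{GKrem} transfers this to arbitrary $\lambda^{\vee}$.

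The gap in your direct approach is exactly where the affine argument is fragile: your step (ii) claims that conjugation by $\pi^{\lambda^{\vee}}$ sends the height-$>N$ part of $\widehat{U}^{-}$ into $K$ and that what remains is governed by a finite-type parabolic. The first part needs $\lambda^{\vee}$ regular dominant (whereas the theorem is for all $\lambda^{\vee}$, though this can be repaired by homogeneity), and more seriously, in an indefinite Kac--Moody group there is no obvious finite-type parabolic to which one can reduce after truncation---the BGKP step (c) relied on affine-specific structure. You have correctly identified the hard part but not supplied a mechanism to close it; the paper sidesteps it entirely by trading the direct $U^{-}$ analysis for the Cartan-side problem $K\pi^{\lambda^{\vee}}K$, where the Demazure--Lusztig recursion gives the needed finiteness.
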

\noindent For $\mathcal{S}_{\lambda^{\vee}}$, one needs to prove 
\begin{theorem}[Spherical Finiteness]\label{maintheorem3} 
For $\lambda^{\vee}, \mu^{\vee}\in \Lambda^{\vee}$ with $\lambda^{\vee}$ dominant, the coset space $K\backslash K\pi^{\mu^{\vee}}U^{+}\cap K\pi^{\lambda^{\vee}}K$ is finite. Moreover, it is empty unless $ \mu^{\vee}\le \lambda^{\vee}$.
\end{theorem}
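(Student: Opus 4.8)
The plan is to deduce Spherical Finiteness from the Gindikin-Karpelevich Finiteness of Theorem~\ref{maintheorem2}, following the strategy of \cite{BGKP}: one cuts the Cartan double coset $K\pi^{\lambda^\vee}K$ along the Iwasawa cells for the opposite unipotent $U^-$. Using the Iwasawa decomposition $G=\bigcup_{\nu^\vee\in\Lambda^\vee}K\pi^{\nu^\vee}U^-$ and intersecting with $K\pi^{\mu^\vee}U^+\cap K\pi^{\lambda^\vee}K$, one obtains a cover by left $K$-invariant pieces
\[
K\pi^{\mu^\vee}U^+\cap K\pi^{\lambda^\vee}K=\bigcup_{\nu^\vee\in\Lambda^\vee}\bigl(K\pi^{\mu^\vee}U^+\cap K\pi^{\nu^\vee}U^-\cap K\pi^{\lambda^\vee}K\bigr).
\]
Each piece is contained in $K\pi^{\mu^\vee}U^+\cap K\pi^{\nu^\vee}U^-$, so by Theorem~\ref{maintheorem2} the set $K\backslash(K\pi^{\mu^\vee}U^+\cap K\pi^{\nu^\vee}U^-\cap K\pi^{\lambda^\vee}K)$ is finite and is empty unless $\mu^\vee\le\nu^\vee$. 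Hence $K\backslash(K\pi^{\mu^\vee}U^+\cap K\pi^{\lambda^\vee}K)$ is a union of finite sets indexed by a subset of $\{\nu^\vee:\mu^\vee\le\nu^\vee\}$, and it suffices to show that only finitely many $\nu^\vee$ contribute.

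The missing ingredient is a Cartan-type upper bound: if $\lambda^\vee$ is dominant and $K\pi^{\nu^\vee}U^-\cap K\pi^{\lambda^\vee}K\neq\emptyset$, then $\nu^\vee\le\lambda^\vee$. Granting this, a piece can be nonempty only when $\mu^\vee\le\nu^\vee\le\lambda^\vee$; writing $\nu^\vee-\mu^\vee$ as a non-negative integral combination of the simple coroots, the number of simple coroots occurring is bounded by $\langle\rho,\lambda^\vee-\mu^\vee\rangle$, which is fixed, so there are only finitely many such $\nu^\vee$. Combined with the previous paragraph this yields finiteness of $K\backslash(K\pi^{\mu^\vee}U^+\cap K\pi^{\lambda^\vee}K)$. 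If, on the other hand, $\mu^\vee\not\le\lambda^\vee$, then no $\nu^\vee$ lies between $\mu^\vee$ and $\lambda^\vee$, every piece is empty, and the coset space is empty, which is the last assertion of the theorem.

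The hard part will be the Cartan-type upper bound. For finite-dimensional groups it is classical (cf. \cite{BG, BGKP}): one compares the action of an element $g\in K\pi^{\nu^\vee}U^-\cap K\pi^{\lambda^\vee}K$ on a finite-dimensional highest-weight representation, reading off the $U^-$-Iwasawa coweight of $g$ from the valuations of the coordinates of $g\cdot v_\Lambda$ paired against a lowest-weight functional, and using that $\pi^{\lambda^\vee}$ stabilises the natural $\mathcal{O}$-lattice because $\lambda^\vee$ is dominant. In the Kac-Moody setting this last point fails, since an integrable highest-weight module $L(\Lambda)$ of $G$ is infinite-dimensional and $\pi^{\lambda^\vee}$ stabilises no lattice; the valuation estimate must therefore be localised to a fixed finite set of weight spaces and carried out inside the pro-unipotent parts of $U^\pm$, in the same spirit as the proof of Theorem~\ref{maintheorem2}, or else transcribed from the corresponding statement in the masure of Gaussent-Rousseau \cite{GR2}. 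Once this bound is in place, the remainder is the elementary bookkeeping above.
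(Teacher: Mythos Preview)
Your decomposition via the $U^-$-Iwasawa cells, the reduction of each piece to Gindikin--Karpelevich finiteness, and the appeal to finiteness of $\{\nu^\vee:\mu^\vee\le\nu^\vee\le\lambda^\vee\}$ is exactly the argument the paper gives in Section~\ref{secapplication}; the paper likewise isolates the Cartan-type bound $K\pi^{\nu^\vee}U^-\cap K\pi^{\lambda^\vee}K\neq\emptyset\Rightarrow\nu^\vee\le\lambda^\vee$ as the only extra input and cites it from \cite[Theorem~1.9]{BGKP}, noting that it ``also follows in general.''

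Where you diverge is in calling this bound ``the hard part'' and suggesting it may need masure techniques or a delicate localisation. It does not: the same highest-weight norm estimate used throughout the paper (Lemma~\ref{sec1l1} and the norm \eqref{repthnorm4}) handles it directly. From $\pi^{\lambda^\vee}k=k'\pi^{\nu^\vee}u^-$ one takes inverses to get $k^{-1}\pi^{-\lambda^\vee}k'=(u^-)^{-1}\pi^{-\nu^\vee}$ and applies both sides to a primitive highest-weight vector $v_\Lambda$ of $V^\Lambda$. On the right the $\Lambda$-component of $(u^-)^{-1}v_\Lambda$ is $v_\Lambda$, so the norm is at least $q^{\langle\Lambda,\nu^\vee\rangle}$. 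On the left $k'v_\Lambda\in V^\Lambda_\mathcal{O}$ has weight components only at $\mu\le\Lambda$, and since $\lambda^\vee$ is dominant one has $\langle\mu,\lambda^\vee\rangle\le\langle\Lambda,\lambda^\vee\rangle$, whence $\|\pi^{-\lambda^\vee}k'v_\Lambda\|\le q^{\langle\Lambda,\lambda^\vee\rangle}$. Thus $\langle\Lambda,\lambda^\vee-\nu^\vee\rangle\ge 0$ for every dominant $\Lambda$, and taking $\Lambda=\omega_i$ gives $\nu^\vee\le\lambda^\vee$. Your worry that $\pi^{\lambda^\vee}$ does not stabilise $V^\Lambda_\mathcal{O}$ is correct but irrelevant: the argument uses $\pi^{-\lambda^\vee}$, for which dominance of $\lambda^\vee$ gives exactly the uniform bound needed, and no finite-dimensionality is required since any single vector of $V^\Lambda$ has finite weight support.
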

 For (untwisted) {\it affine} Kac-Moody groups, Braverman et. al. obtain Theorems~\ref{maintheorem1}, \ref{maintheorem2} and \ref{maintheorem3}. Note that their proofs for the second part of the Spherical and Gindikin-Karpelevich Finiteness (the inequalities $ \mu^{\vee}\le \lambda^{\vee}$) extend to general Kac-Moody groups without any change. So, we do not prove this part here in this paper.
 
  The first part of the (untwisted) {\it affine} Gindikin-Karplevich Finiteness was proven for $\lambda^{\vee}=0$ by showing that:
 \begin{enumerate}
\item [(a)] $K\pi^{\mu^{\vee}}U^{+}\cap KU^{-}=\cup_{w\in W} K\pi^{\mu^{\vee}}U^{+}\cap K\mathcal{V}^{-}_{w}$, where for each $w\in W$, $\mathcal{V}^{-}_{w}$ is a certain subset of $U^{-}$ defined in  Section 3 of \cite{BGKP}.
\item[(b)] A corollary of the Kac-Moody generalization \cite[Lemma~18.2]{BFG} of a representation theoretic construction due to A. Joseph \cite{Jos2, Jos} implies that there are finitely many $w$ which appear in the above union.
\item[(c)] By using the completions, it is then proved that for each such $w$, $K\backslash K\pi^{\mu^{\vee}}U^{+}\cap K\mathcal{V}^{-}_{w}$ is finite.
\end{enumerate}
\noindent Next, the Gindikin-Karpelevich finiteness is used to get the Approximation Theorem as well as Spherical Finiteness. Finally, by combining these results with an affine generalization of the Macdonald's formula for $\mathcal{S}_{\lambda^{\vee}}$ from \cite{BKP}, the following affine version of the Gindikin-Karpelvich formula is obtained
\begin{eqnarray}
\mathscr{G}_{0}=\frac{1}{\mathfrak{m}} \prod_{\alpha^{\vee}\in \Delta^{\vee}_{+}}\left (\frac{1-q^{-1}e^{-\alpha^{\vee}}}{1-e^{-\alpha^{\vee}}}\right)^{m(\alpha^{\vee})},
\end{eqnarray}
where $m(\alpha^{\vee})$ is the multiplicity of the coroot $\alpha^{\vee}$ and $\mathfrak{m}$ is a $W$-invariant factor which depends on the Langlands-dual root system of given affine Lie algebra. Recently, for general Kac-Moody settings the factor $\mathfrak{m}$ was studied and various properties were listed by D. Muthiah, A. Pusk\'as and I. Whitehead in \cite{MPW}.

\subsection{}\label{sub5} As stated in previous subsection, our key objective in this paper is to obtain the proofs of Theorems~\ref{maintheorem1}, \ref{maintheorem2} and \ref{maintheorem3} for general Kac-Moody groups over non-archimedean local fields. Though Braverman-Garland-Kazhdan-Patnaik's work \cite{BGKP} is the main motivation of this paper, our approach to prove these results is a little different. Unlike the affine case, we prove Theorem~\ref{maintheorem1} independently of the finiteness result Theorem~\ref{maintheorem2}. Theorem~\ref{maintheorem1} has also been proven by A. H\'{e}bert in \cite[Theorem 6.1]{Aug}, but our proof is perhaps more elementary and can also be used to obtain the Iwahori version of the assertion (see Proposition~\ref{Prop1}). 

Next, we turn to Theorem~\ref{maintheorem2}. For the first part of the assertion, our method of proof restricts us to put a further condition on $\lambda^{\vee}$. Namely, we first prove
\begin{theorem}[Weak Spherical Finiteness]\label{maintheorem4} 
Let $\mu^{\vee}\in \Lambda^{\vee} $. For $\lambda^{\vee}\in \Lambda^{\vee}_{+}$ regular and sufficiently dominant, the set $K\backslash K\pi^{\mu^{\vee}}U^{+}\cap K\pi^{\lambda^{\vee}}K$ is finite.
\end{theorem}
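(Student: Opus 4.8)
The plan is to reduce the finiteness of $K\backslash K\pi^{\mu^{\vee}}U^{+}\cap K\pi^{\lambda^{\vee}}K$ to the Gindikin-Karpelevich side, for which we already have combinatorial control, by choosing $\lambda^{\vee}$ regular and sufficiently dominant so that the intersection with $K\pi^{\lambda^{\vee}}K$ coincides with an intersection involving $U^{-}$. More precisely, I would first establish the easy half of the Approximation Theorem~\ref{maintheorem1} (which we may assume) giving, for $\lambda^{\vee} > \lambda^{\vee}_{0}(\mu^{\vee})$,
\begin{eqnarray}\label{wsfreduction}
K\pi^{\lambda^{\vee}-\nu^{\vee}}U^{+}\cap K\pi^{\lambda^{\vee}}K = K\pi^{\lambda^{\vee}-\nu^{\vee}}U^{+}\cap K\pi^{\lambda^{\vee}}U^{-}
\end{eqnarray}
for every $\nu^{\vee}$ in the finite interval $[0,\lambda^{\vee}-\mu^{\vee}]$ that can contribute. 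Writing $\mu^{\vee} = \lambda^{\vee}-\nu^{\vee}$, this identifies our coset space with $K\backslash K\pi^{\mu^{\vee}}U^{+}\cap K\pi^{\lambda^{\vee}}U^{-}$. Then, translating by $\pi^{-\lambda^{\vee}}$ on the right (which preserves $K\pi^{\lambda^{\vee}}U^{-}\pi^{-\lambda^{\vee}}$ suitably) or rather by exploiting the homogeneity in Lemma~\ref{GKrem}, I reduce to counting in $K\backslash K\pi^{\mu^{\vee}-\lambda^{\vee}}U^{+}\cap KU^{-}$, i.e.\ to the $\lambda^{\vee}=0$ Gindikin-Karpelevich situation with coweight $\mu^{\vee}-\lambda^{\vee}\le 0$.

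Next I would adapt the three-step argument (a)--(c) sketched in Subsection~\ref{sub5} for the affine case. For (a), I decompose $U^{-}$ according to the Birkhoff/Bruhat-type stratification: $KU^{-} = \bigcup_{w\in W} K\,\mathcal{V}^{-}_{w}$, where $\mathcal{V}^{-}_{w}$ is the relevant piece of $U^{-}$ indexed by $w$; since the sets $K\mathcal{V}^{-}_w$ need not be disjoint after intersecting with $K\pi^{\nu^{\vee}}U^{+}$, I get $K\pi^{\nu^{\vee}}U^{+}\cap KU^{-} = \bigcup_{w} \big(K\pi^{\nu^{\vee}}U^{+}\cap K\mathcal{V}^{-}_w\big)$. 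For (b), the key input is the Kac-Moody generalization of Joseph's construction (\cite[Lemma~18.2]{BFG}, as cited): the $w$ that actually contribute are constrained by a weight inequality forcing $w$ to lie in a finite set depending only on $\nu^{\vee}$. For (c), I pass to the completed group (the "completion" of $U^{-}$ or of the associated flag variety) so that each $\mathcal{V}^{-}_w$ becomes a finite-dimensional affine space over $\mathcal{O}$-points modulo higher congruence, and a direct valuation computation shows $K\backslash K\pi^{\nu^{\vee}}U^{+}\cap K\mathcal{V}^{-}_w$ is finite — this uses that $\nu^{\vee}\le 0$ so only finitely many $\mathcal{O}/\pi^{N}$-reductions matter.

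The main obstacle I anticipate is step (c) combined with making the completions work in the \emph{general} Kac-Moody setting rather than merely the affine one: in the affine case one has a concrete loop-group model and an explicit filtration of $U^{-}$ by finite-dimensional subgroups, whereas here one must work with the pro-group structure on the completed $U^{-}$ and verify that the intersection $K\pi^{\nu^{\vee}}U^{+}\cap K\mathcal{V}^{-}_w$, which a priori lives in an infinite product, is already detected at finite level. Establishing that the "depth" at which the double coset is detected is bounded in terms of $\nu^{\vee}$ and $w$ — and that this bound behaves well as $\lambda^{\vee}$ grows, so that the finitely many $w$ from step (b) each give a finite count — is the delicate point. A secondary technical issue is checking that the identity \eqref{wsfreduction} from Theorem~\ref{maintheorem1} holds uniformly over the finite range of $\nu^{\vee}$ needed, which is why the hypothesis that $\lambda^{\vee}$ be regular and \emph{sufficiently} dominant (rather than merely dominant) enters, and is exactly the reason this theorem is stated in its "weak" form before the full Spherical Finiteness Theorem~\ref{maintheorem3}.
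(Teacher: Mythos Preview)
Your proposal reverses the logical order of the paper and, in doing so, leaves the central difficulty unresolved. In this paper the flow is: Approximation Theorem (proved independently) $+$ Weak Spherical Finiteness $\Rightarrow$ Gindikin--Karpelevich Finiteness $\Rightarrow$ full Spherical Finiteness. You instead try to reduce Weak Spherical Finiteness to the $\lambda^{\vee}=0$ Gindikin--Karpelevich count via \eqref{wsfreduction} and then prove that count by the affine BGKP steps (a)--(c). But within the paper, the finiteness of $K\backslash K\pi^{\nu^{\vee}}U^{+}\cap KU^{-}$ is \emph{derived from} Theorem~\ref{maintheorem4}, so invoking it here is circular unless your step~(c) supplies an independent proof. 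And step~(c)---passing to completions and arguing that the coset is detected at finite depth---is exactly the point you flag as the obstacle and do not carry out; it is also precisely the step the paper deliberately avoids in the general Kac--Moody case.

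The paper's route is genuinely different and sidesteps completions altogether. It decomposes $M(\lambda^{\vee},\mu^{\vee})$ into Iwahori pieces $m_{w,\lambda^{\vee}}^{-1}(\pi^{\mu^{\vee}})$ indexed by $w\in W$ (Lemma~\ref{lemiwahoridec}); Joseph's lemma (Corollary~\ref{usefulcorrolary}) bounds the contributing $w$, which is the analogue of your step~(b). For the analogue of your step~(c), instead of working in a pro-group, the paper exploits that for fixed $w$ the group $U^{-}_{w^{-1}}$ is \emph{finite-dimensional} and carries a Haar measure, so one can form the integral $I_{w,\lambda^{\vee}}=\int_{U^{-}_{w^{-1},\pi}}\Phi_{\rho}(u^{-}_{w}\pi^{w\lambda^{\vee}})\,du^{-}_{w}$ and prove, via a Demazure--Lusztig recursion (Proposition~\ref{proprkgt1}, Corollary~\ref{fsuppc2}), that it lies in $\mathbb{C}[\Lambda^{\vee}]$. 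The Iwahori piece is then mapped into a level set of this integral (Lemmas~\ref{fibfinxlemma2}, \ref{lemfiberfin1}, \ref{fibfinprop1}), which is finite. This replaces your completion argument by a purely algebraic recursion on reduced words, and it is this substitution---not the reduction to $U^{-}$---that makes the general Kac--Moody case go through.
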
 
This theorem is proven by getting the finiteness at the Iwahori level. The Iwahori level questions are indexed by the Weyl group $W$. So, first we show that there are finitely many elements of the Weyl group which contribute (Section~\ref{section4}), each indexing an Iwahori piece of our sum.  In Section~\ref{Finitesupport}, we introduce a certain integral and show that it satisfies a recursion relation in terms of certain Demazure-Lusztig operators. Our objective in so doing is to obtain the finiteness of the Iwahori piece. In Section~\ref{sectionfiberfin},  we establish a relation between an Iwahori piece and a level set of the integral which allows us to complete the proof of the Weak Spherical Finiteness. 

In Section~\ref{secapplication}, we discuss the applications of the results proven in the previous subsection. First, we apply the Approximation Theorem and the Weak Spherical Finiteness to show that if $\mu^{\vee}\in Q^{\vee}_{-}$ is very small as compared to $\lambda^{\vee}=0$, then $K\backslash K\pi^{\mu^{\vee}}U^{+}\cap KU^{-}$ is finite and this together with Lemma~\ref{GKrem} imply the Gindikin-Karplevich Finiteness. Then, as in the affine case, we use the Gindikin-Karplevich Finiteness to get the proof of the Spherical Finiteness.

\subsection{}\label{sub7}  These finiteness theorems for general Kac-Moody settings have also appeared in some other publications. H\'{e}bert has obtained the proof of Theorem~\ref{maintheorem3} for general Kac-Moody settings. Theorem~\ref{maintheorem2} (for $\lambda=0$) is shown to be true by S. Gaussent and G. Rousseau in \cite{GR2}. Both of these proofs involve the techniques based on the use of geometric objects known as {\it masures}, introduced by Gaussent and Rousseau in \cite{GR}. These are an analogue of the Bruhat-Tits {\it buildings} for groups over local fields. On the other hand, our strategy for proving the assertions of these theorems is elementary, algebraic in nature and relies on the use of the representation theory. It would be interesting to compare these two techniques in more detail.

\subsection*{Acknowledgement} We would like to thank Manish M. Patnaik for his support, several discussions and valuable feedbacks which made this work complete.

\section{Kac-Moody Algebras}\label{Bassettings}
\subsection{Generalized Cartan Matrices}\label{GCM}
Let $I$ be a finite set of cardinality $l$ and  $A=(a_{ij})_{i,j\in I}$ be a square matrix such that for all $i,j\in I$,
\begin{enumerate}
\item[(i)] $a_{ii}=2$; for $i\ne j$, $a_{ij}$ are non positive integers; and $a_{ij}=0$ if and only if $a_{ji}=0$, 
\item[(ii)] there exist a diagonal matrix $D$ and a positive definite matrix $P$ such that $A=DP$. 
\end{enumerate}
 A matrix $A$ is said to be a {\it Cartan matrix} if it satisfies (i) \& (ii) and if $A$ satisfies only (i), then it is called a {\it generalized Cartan matrix} (GCM). 

\noindent A GCM $A$ is said to be equivalent to another matrix $B$, if $B$ is obtained by reordering the indices $i$ and $j$, and vice versa. If a GCM $A$ is equivalent to a block diagonal matrix with more than one block, we say it is a {\it decomposable} GCM; otherwise, it is known as an {\it indecomposable} GCM. 
An indecomposable GCM $A$ can be classified into the following three types,
\begin{enumerate}
\item[(a)]  {\bf Finite:} If $A$ is positive-definite. In this case the determinant of $A$ is positive.
\item[(b)] {\bf Affine:} If $A$ is positive-semidefinite. In this case $det(A)=0$.
\item[(c)] {\bf Indefinite:} If $A$  is neither finite nor affine type.
\end{enumerate}
As in \cite{Kac1}, suppose $( \mathfrak{h}, \Pi, \Pi^{\vee})$ is a realization of GCM $A$, that is, $\mathfrak{h}$ is a complex vector space of finite dimension; $\Pi^{\vee}=\{\alpha_{i}^{\vee}\}_{i\in I}\subset \mathfrak{h}$, $\Pi=\{\alpha_{i}\}_{i\in I}\subset \mathfrak{h}^{\ast}$ are two linearly independent sets, such that for all $i,j\in I$, $\alpha_{j}(\alpha_{i}^{\vee})=a_{ij}$; and, $dim(\mathfrak{h})=2l-rank(A)$. The elements of $\Pi$ are called  {\it simple roots} and those of $\Pi^{\vee}$ are known as {\it simple coroots}. A realization $( \mathfrak{h}, \Pi, \Pi^{\vee})$ is said to be a {\it decomposable realization} if $\mathfrak{h}=\mathfrak{h}_{1}\oplus \mathfrak{h}_{2}$, $\Pi^{\vee}=(\Pi_{1}^{\vee}\times \{0\})\cup (\{0\}\times \Pi_{2}^{\vee})$ and  $\Pi=(\Pi_{1}\times \{0\})\cup (\{0\}\times \Pi_{2})$, where $( \mathfrak{h}_{1}, \Pi_{1}, \Pi^{\vee}_{1})$ and $( \mathfrak{h}_{2}, \Pi_{2}, \Pi^{\vee}_{2})$ are realizations themselves. A realization is called {\it indecomposable realization} if it is not a decomposable realization. In \cite[Proposition 1.1]{Kac1}, Kac asserts that an indecomposable GCM $A$ corresponds to an indecomposable realization $( \mathfrak{h}, \Pi, \Pi^{\vee})$ which is unique up to equivalence in the following sense: A realization $( \mathfrak{h}, \Pi, \Pi^{\vee})$ is said to be  equivalent to another realization $( \mathfrak{h}', \Pi', {\Pi'}^{\vee})$, if there exists an isomorphism $\phi\in \operatorname{Hom}_{\mathbb{C}}(\mathfrak{h}, \mathfrak{h}')$, such that $\phi(\Pi^{\vee})={\Pi'}^{\vee}$ and $\phi^{\ast}(\Pi)=\Pi'$, where $\phi^{\ast}$ is the induced isomorphism of the dual spaces of $\mathfrak{h}$ and $\mathfrak{h}'$. Furthermore, this isomorphism is unique if $det(A)\ne 0$.

\subsection{Presentation of Kac-Moody Algebras}\label{prekmal}

A GCM of finite type, up to equivalence, corresponds to a unique finite dimensional complex semisimple Lie algebra $\mathfrak{g}$ up to isomorphism. In \cite{Ch1}, Chevalley showed that a finite dimensional complex semisimple Lie algebra admits an abstract presentation as a set of generators satisfying relations in terms of entries of a Cartan matrix. Later, Serre in \cite{S1} proved that these relations are the defining relations of $\mathfrak{g}$. Insprired by this, Victor G. Kac \cite{Kac0} and R. Moody \cite{Mo0} independently introduced a new class of Lie algebras by giving a finite presentation in terms of entries of a GCM $A$. These Lie algebras are infinite dimensional generalizations of finite dimensional complex semisimple Lie algebras. 

To describe these Lie algebras, let $A=(a_{ij})_{i,j\in I}$ be a GCM and  $(\mathfrak{h}, \Pi, \Pi^{\vee})$ the associated realization. We define $\mathfrak{g}$ as the Lie algebra generated by $\mathfrak{h}$ and the $2n$ generators $\{e_{i}, f_{i}\}_{i\in I}$ subject to the following relations:
\begin{enumerate}
\item[(1)] $[\mathfrak{h}, \mathfrak{h}]=0.$
For all $i,j\in I,$
\item[(2)] $[e_{i},h ]=\alpha_{i}(h)e_{i},\;h\in \mathfrak{h}$,
\item[(3)] $[f_{i},h ]=-\alpha_{i}(h)f_{i},\;h\in \mathfrak{h}$,
\item[(4)] $[e_{i},f_{j} ]=\delta_{ij}\alpha_{i}^{\vee}$, where $\delta_{ij}$ is the {\it Kronecker delta },
\item[(5)] For $i\ne j$,  ${\operatorname{ad}_{e_{i}}}^{(-a_{ij}+1)}(e_{j})=0$ and ${\operatorname{ad}_{f_{i}}}^{(-a_{ij}+1)}(f_{j})=0$, where $\operatorname{ad}$ is the adjoint representation of $\mathfrak{g}$.
\end{enumerate}

The Lie algebra $\mathfrak{g}$ is known as a Kac-Moody algebra. The subalgebra $\mathfrak{h}$ is called {\it Cartan subalgebra} and $\{e_{i}, f_{i}\}_{i\in I}$ are known as the {\it Chevalley generators} of $\mathfrak{g}$. Let $\mathfrak{n}^{+}$ and $\mathfrak{n}^{-}$ be the subalgebras generated by  $\{e_{i}\}_{i\in I}$ and $\{ f_{i}\}_{i\in I}$, respectively. Then $\mathfrak{g}$ has a vector subspace decomposition
\begin{eqnarray}\label{Tridec}
\mathfrak{g}&=&\mathfrak{n}^{-}\oplus \mathfrak{h}\oplus \mathfrak{n}^{+}.
\end{eqnarray}
This decomposition is known as the {\it triangular decomposition} of $\mathfrak{g}$.

A Kac-Moody algebra $\mathfrak{g}$ falls in one of the three categories finite, affine or indefinite, according to the type of GCM $A$ (cf. Subsection~\ref{GCM}). A finite type Kac-Moody algebra is a complex semisimple Lie algebra. There are four infinite families of complex semisimple Lie algebras $A_{n},\; n\ge1$; $B_{n},\; n\ge2$; $C_{n},\; n\ge3$; $D_{n},\; n\ge 4$. These four families are known as {\it classical Lie algebras}. In addition to the classical Lie algebras there are five so-called {\it exceptional Lie algebras} Lie algebras, $G_{2}$, $F_{4}$, $E_{6}$, $E_{7}$ and $E_{8}$. 
An affine Kac-Moody algebra is an extensions of loop algebra of finite dimensional semisimple Lie algebra. So, once the classification in finite dimension is known, the classification of affine Kac-Moody algebras becomes possible. These Lie algebras are described with the symbol $X^{r}$ with $X=A_{n}, B_{n}, C_{n}, D_{n}, E_{6}, E_{7}, E_{8}, F_{4}$ and $G_{2}$, and $r=1,2, 3$. When $r=1$, the corresponding Lie algebras are called the {\it untwisted affine Lie algebras}. When $r=2, 3$, the corresponding Lie algebras are called the {\it twisted affine Lie algebras}. 
The Kac-Moody algebras of indefinite type are the least understood and the classification of the corresponding root systems has not yet been achieved. However, a subclass known as the {\it hyperbolic Kac-Moody algebras} is well known. These correspond to the GCMs $A$ such that every proper, indecomposable principal submatrix of $A$ is either of finite or affine type. In this case, we have $det(A)<0$. These Lie algebras and the related data have been studied in the literature, for example, see \cite{CCCMNNP, CFL, CN, Li, Zha}.

\subsection{Roots and Weyl Group}\label{rootsandweyl}
Let $Q=\oplus_{i\in I}\mathbb{Z}\alpha_{i}$ and $Q^{\vee}=\oplus_{i\in I}\mathbb{Z}\alpha_{i}^{\vee}$ be the root and coroot lattice, respectively. The Lie algebra $\mathfrak{g}$ admits another decomposition $\mathfrak{g}=\oplus_{\alpha\in Q}\mathfrak{g}_{\alpha}$ under the adjoint action of $\mathfrak{h}$ where 
$$\mathfrak{g}_{\alpha}=\{x\in \mathfrak{g}\mid [h,x]=\alpha(h)x; \forall \;h\in \mathfrak{h}\}.$$ 
This is known as the {\it root space decomposition}. If $\alpha\ne 0$ and $\mathfrak{g}_{\alpha}\ne 0$, the subspace $\mathfrak{g}_{\alpha}$ is known as the {\it root space} of $\alpha$; its dimension $m({\alpha})$ is called the multiplicity of $\alpha$; and, $\alpha$ is called a {\it root}. We denote the set of roots by $\Delta$. Every root can be written as an integral combination of the simple roots, with the coefficients, either all positive or all negative integers; a root is called positive or negative, accordingly. We denote the set of positive roots by $\Delta_{+}$, the set of negative roots by $\Delta_{-}$ one has a disjoint union,
$$\Delta=\Delta_{+}\sqcup \Delta_{-}.$$

In the study of finite dimensional complex semisimple Lie algebras, a symmetric, nondegenerate and invariant bilinear form known as the {\it Killing form} plays a significant role. An analogue of the Killing form in the general setting can be defined if the GCM $A$ is {\it symmetrizable}. That is, $A=DB$, where $D$ is a non-singular diagonal matrix and $B$ is a symmetric matrix. Restriction of this bilinear form to $\mathfrak{h}$ is also nondegenerate; therefore it induces an isomorphism between $\mathfrak{h}$ and $\mathfrak{h}^{\ast}$. The natural pairing between $\mathfrak{h}$ and $\mathfrak{h}^{\ast}$ is denoted by
$$\langle -,-\rangle:\mathfrak{h}^{\ast}\times \mathfrak{h}\longrightarrow \mathbb{C}.$$ 
Let $Q_{+}=\oplus_{i\in I}\mathbb{Z}_{\ge 0}\alpha_{i}$. The space $\mathfrak{h}^{\ast}$ can be equipped with a partial order $\le $ defined as: $\mu\le \lambda$ if and only if $\lambda-\mu\in Q_{+}$, for all $\lambda, \mu\in \mathfrak{h}^{\ast}$. Similarly, we can define a partial order on $\mathfrak{h}$, which we denote by the same symbol $\le $, by setting $Q^{\vee}_{+}=\oplus_{i\in I}\mathbb{Z}_{\ge 0}\alpha_{i}^{\vee}$ and imposing the same defining condition as above. An element $\lambda\in \mathfrak{h}^{\ast}$ is {\it integral} if $\langle \lambda,\alpha_{i}^{\vee}\rangle\in \mathbb{Z}$, is {\it dominant} if $\langle \lambda,\alpha_{i}^{\vee}\rangle \ge 0$, and is called {\it regular} if $\langle \lambda,\alpha_{i}^{\vee}\rangle \ne 0$, for all $i\in I$. Let 
\begin{eqnarray}
\Lambda:=\{\lambda\in \mathfrak{h}^{\ast}\mid \lambda(h)\in \mathbb{Z},\;\forall h\in \mathfrak{h} \}
\end{eqnarray}
 and $\Lambda^{\vee}=\operatorname{Hom}_{\mathbb{Z}}(\Lambda, \mathbb{Z}) $ be the weight and coweight lattice, respectively. We denote by $\Lambda_{+}$ the set of dominant weights and $\Lambda_{reg}$ the set of regular weights. Similarly we define the sets $\Lambda_{+}^{\vee}$ and $\Lambda_{reg}^{\vee}$.
For $i\in I$, let us define a map $w_{i}=w_{\alpha_{i}}$ on $\mathfrak{h}^{\ast}$ by setting
\begin{eqnarray} 
w_{i}(\lambda)=\lambda-\lambda(\alpha_{i}^{\vee})\alpha_{i}=\lambda-\langle \lambda, \alpha_{i}^{\vee}\rangle \alpha_{i},
\end{eqnarray}
for all $\lambda\in \mathfrak{h}^{\ast}.$ This map is a reflection in the hyperplane 
\begin{eqnarray}
(\alpha_{i}^{\vee})^{\perp}=\{\mu\in \mathfrak{h}\mid \langle \mu, \alpha_{i}^{\vee}\rangle=0\}.
\end{eqnarray}
 The reflection $w_{i}$ is called a {\it simple root reflection} or the {\it Weyl reflection} and the group $W\subset \Aut(\mathfrak{h}^{\ast})$ generated by the simple root reflections $w_{i}$ for all $i\in I$, is called the {\it Weyl group}. The Weyl group also acts on $\mathfrak{h}$ via the following formula, 
\begin{eqnarray}
w_{i}(\lambda^{\vee})=\lambda^{\vee}-\langle \alpha_{i}, \lambda^{\vee}\rangle \alpha_{i}^{\vee},
\end{eqnarray}
for all $\lambda^{\vee}\in \mathfrak{h}.$
If the Kac-Moody algebra $\mathfrak{g}$ is of affine or indefinite type, the set $\Delta$ of roots admits another partition 
\begin{eqnarray}
\Delta=\Delta^{re}\cup\Delta^{im},
\end{eqnarray}
 where 
$\Delta^{re}=W\Pi$ and $\Delta^{im}=\Delta-\Delta^{re}$. The elements of $\Delta^{re}$ are called {\it real roots} and those of $\Delta^{im}$ are known as the {\it imaginary roots}. The transpose of GCM $A$ corresponds to the realization $(\mathfrak{h}^{\ast}, \Pi^{\vee}, \Pi)$ and a dual root system $\Delta^{\vee}\subset \mathfrak{h} $, which is called the set of coroots. This set also admits the disjoint decompositions 
\begin{eqnarray}
\Delta^{\vee}=\Delta^{\vee}_{+}\cup \Delta^{\vee}_{ -},\;\;\; \Delta^{\vee}=\Delta^{\vee, re}\cup \Delta^{\vee, im}
\end{eqnarray}
 and there is a bijection $\alpha\mapsto \alpha^{\vee}$ between $\Delta$ and $\Delta^{\vee}$.
\subsection{Highest Weight Representation}
Let $\mathfrak{g}$ be a Kac-Moody algebra, $\mathcal{U}=\mathcal{U}(\mathfrak{g})$ be the universal enveloping algebra of $\mathfrak{g}$. The triangular decomposition (\ref{Tridec}) of $\mathfrak{g}$ yields the triangular decomposition 
$$\mathcal{U}=\mathcal{U}(\mathfrak{n}^{+})\oplus \mathcal{U}( \mathfrak{h})\oplus \mathcal{U}( \mathfrak{n}^{-})$$ 
of $\mathcal{U}$.
For $\lambda\in \Lambda_{+}$, a $\mathfrak{g}$ representation $V=V^{\lambda}$ over $\mathbb{C}$ is a {\it highest weight representation} with the highest weight ${\lambda}\in \mathfrak{h}^{*}$ and a highest weight vector $v_{\lambda}$ if:
\begin{enumerate}
\item[(i)] $\mathfrak{n}^{+}v_{\lambda}=0$,
\item[(ii)] $h.v_{\lambda}=\lambda(h)v$ for all $h\in \mathfrak{h}$, 
 \item[(iii)] $V=\mathcal{U}v_{\lambda}$.
 \end{enumerate}
 Moreover, if 
\begin{enumerate}
\item[(iv)] for all $i\in I$, $e_{i}$ and $f_{i}$ act as locally nilpotent operators on $V$, that is, for each $v\in V$ there exist integers $M$ and $N$ such that  $e_{i}^{M}v=f_{i}^{N}v=0$, \\ then the space $V$ is said to be an {\it integrable} highest weight representation.
 \end{enumerate}

The space $V$ has a {\it weight space decomposition} 
\begin{eqnarray}
V=\oplus_{\mu\in \mathfrak{h}^{\ast}}V_{\mu},
\end{eqnarray} 
where $V_{\mu}=\{v\in V\mid hv=\mu(h)v,\; \forall\; h\in \mathfrak{h}\}.$ 
 Let us denote by $P_{\lambda}$ the set of weights of $V$. For $\mu\in P_{\lambda}$ 
 \begin{eqnarray}\label{projmap4}
 \eta_{\mu}:V\longrightarrow V_{\mu}
 \end{eqnarray}
 denotes the projection map. Unless otherwise specified, throughout this paper our highest weight representation shall be integrable. 
 
 The set of weights $P_{\lambda}$ inherits the partial order from $\mathfrak{h}^{\ast}$ and each $\mu\in P_{\lambda}$ satisfies $\mu\le \lambda$ which implies $ \lambda-\mu=\sum_{i\in I}n_{i}\alpha_{i}$ with $n_{i}\in \mathbb{Z}_{\ge 0}$ for all $i\in I$. For $\mu\in P_{\lambda}$, we define the {\it depth} of $\mu$ as $depth(\mu)=\sum_{i\in I}n_{i}$.  
Let $V_{\mathcal{O}}$ the integral lattice in $V$. For $v\in V$, set $Ord(v)=min_{n\in\mathbb{Z}}\pi^{n}v\in V_{\mathcal{O}}$ and define a norm on $V$ as
\begin{eqnarray}\label{repthnorm4}
||v||:=q^{Ord(v)},
\end{eqnarray}
for all $v\in V$. An element $v\in V$ is said to be  {\it primitive} if $||v||=1$; we will always choose the highest weight vector $v_{\lambda}$ to be a primitive element. 
\noindent We end this section with the following lemma which can be verified easily.
\begin{lemma}\label{sec1l1}
If $v,w\in V$ belong to different weight spaces then $||v+w||\ge ||v||$.
\end{lemma}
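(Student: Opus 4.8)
The plan is to reduce everything to the compatibility of the integral lattice $V_{\mathcal{O}}$ with the weight space decomposition $V=\oplus_{\mu}V_{\mu}$. Recall that $V_{\mathcal{O}}$ is an $\mathcal{O}$-lattice obtained from the primitive highest weight vector $v_{\lambda}$ by applying the Garland--Kostant integral form of $\mathcal{U}(\mathfrak{n}^{-})$, whose generators $f_i^{(n)}=f_i^n/n!$ shift weights by $-n\alpha_i$; hence $V_{\mathcal{O}}$ is homogeneous for the weight grading, i.e.
$$V_{\mathcal{O}}=\bigoplus_{\mu\in P_{\lambda}}\bigl(V_{\mathcal{O}}\cap V_{\mu}\bigr).$$
I would record this as the single structural input of the proof (or cite it), as it is the only nontrivial ingredient.

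Granting this, the key step is to observe that $||\cdot||$ is a ``sup-norm'' for the weight grading. Given $x\in V$, write $x=\sum_{\mu}x_{\mu}$ with $x_{\mu}=\eta_{\mu}(x)\in V_{\mu}$ (a finite sum). Since multiplication by $\pi^{n}$ commutes with each projection $\eta_{\mu}$ and the above sum is direct, for every $n\in\mathbb{Z}$ we have $\pi^{n}x\in V_{\mathcal{O}}$ if and only if $\pi^{n}x_{\mu}\in V_{\mathcal{O}}\cap V_{\mu}$ for all $\mu$, that is, if and only if $n\ge Ord(x_{\mu})$ for all $\mu$. Taking the minimal such $n$ gives $Ord(x)=\max_{\mu}Ord(x_{\mu})$, and since $q>1$ this yields $||x||=\max_{\mu}||x_{\mu}||$.

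Finally, apply this to $x=v+w$. As $v$ and $w$ lie in distinct weight spaces $V_{\mu_{1}}$ and $V_{\mu_{2}}$ with $\mu_{1}\ne\mu_{2}$, the weight components of $v+w$ are exactly $v$ at $\mu_{1}$ and $w$ at $\mu_{2}$ (all others vanishing), so $||v+w||=\max(||v||,||w||)\ge ||v||$; the degenerate cases $v=0$ or $w=0$ are immediate with the convention $||0||=0$. I do not expect any real obstacle here: the whole argument is an unwinding of the definitions of $Ord$ and $||\cdot||$ once the homogeneity of $V_{\mathcal{O}}$ is in hand, which is presumably why the paper calls the verification easy.
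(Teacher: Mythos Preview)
Your argument is correct and is exactly the easy verification the paper alludes to: the paper omits the proof but records the needed structural input $V_{\mathbb{Z}}=\oplus_{\mu\in P_{\lambda}}V_{\mathbb{Z},\mu}$ (hence $V_{\mathcal{O}}=\oplus_{\mu}(V_{\mathcal{O}}\cap V_{\mu})$) in \S3.1.2, from which your sup-norm identity $||x||=\max_{\mu}||\eta_{\mu}(x)||$ and the lemma follow immediately.
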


\section{Kac-Moody Groups}
Let $\mathfrak{g}$ be complex Kac-Moody algebra of finite type, that is, $\mathfrak{g}$ is a finite dimensional complex semisimple Lie algebra. By using the $\mathbb{Z}$-form of $\mathfrak{g}$, Chevalley in \cite{Che1} found a uniform procedure to associate an algebraic group $G$ with $\mathfrak{g}$ over an arbitrary field $\mathbb{K}$, which is semisimple if $\mathbb{K}$ is algebraically closed. These groups are known as the {\it Chevalley groups}. A detailed exposition and complete construction of the Chevalley groups can be found in R. Steinberg's lecture notes \cite{St}. 

In the general setting, the association of a group with a Kac-Moody algebra corresponding to a GCM of affine or indefinite type is a complex problem. There are three constructions of Kac-Moody groups which can be found in the literature. For this paper, we choose the Kac-Moody groups constructed by  L. Carbone and H. Garland  but our results are independent of this choice.

\subsection{Carbone-Garland Construction}\label{CGKMg}
 Steinberg's construction of Chevalley groups is a natural candidate for an infinite dimensional generalization. Carbone and Garland extended this construction to Kac-Moody root systems over arbitrary fields in \cite{CG}. Recently this construction has been generalized to define Kac-Moody groups over  $\mathbb{Z}$ and arbitrary rings by Carbone et al. in \cite{AC, CW}. 
 
 \subsubsection{ $\mathbb{Z}$-Forms: Pathway to Arbitrary Fields}
 We retain the notation of Kac-Moody algebra $\mathfrak{g}$ and the related data from Section~\ref{Bassettings}. Let $\mathcal{U}$, $\mathcal{U}(\mathfrak{n}^{+})$ and $\mathcal{U} (\mathfrak{n}^{-})$ be the universal enveloping algebras of $\mathfrak{g}$, $\mathfrak{n}^{+}$ and $\mathfrak{n}^{-}$, respectively. Let $S(\mathfrak{h})$ be the symmetric algebra of $\mathfrak{h}$,  Tits in \cite{Ti10} asserts that the canonical map 
 \begin{eqnarray}
 \mathcal{U}(\mathfrak{n}^{+})\otimes S(\mathfrak{h})\otimes \mathcal{U}(\mathfrak{n}^{-})\longrightarrow \mathcal{U}
 \end{eqnarray}
 is a bijection. Next, we introduce some notions on an associative algebra $\mathcal{A}$ over $\mathbb{C}$ which will be used later.
 \begin{definition}
 A $\mathbb{Z}$ form of $\mathcal{A}$ is a $\mathbb{Z}$ subalgebra $\mathcal{A}_{\mathbb{Z}}$ of $\mathcal{A}$ such that the canonical map $\mathcal{A}_{\mathbb{Z}}\otimes \mathbb{C}\longrightarrow \mathcal{A}$ is a bijection.
 \end{definition}
 \begin{definition}
 For $a\in \mathcal{A}$ and $n\in \mathbb{Z}_{\ge 0}$, we define the following elements of $\mathcal{A}$,
 \begin{eqnarray}
 a^{(n)}&:=&\frac{a^{n}}{n!}\\
 \binom{a}{n}&:=&\frac {a(a-1)(a-2)\dots (a-n+1)}{n!}.
 \end{eqnarray}
 \end{definition}
 
 Let us denote by $\mathfrak{t}$ and $\mathfrak{t}^{\vee}$ the linear span of $\alpha_{i}$ and $\alpha_{i}^{\vee}$ for $i\in I$, respectively. For $i\in I$ and $n\in\mathbb{Z}_{\ge 0}$, $\mathcal{U}_{i,+}$ (resp. $\mathcal{U}_{i,-}$) be the subring $\sum_{n} \mathbb{Z} e_{i}^{(n)}$ (resp. $\sum_{n}\mathbb{Z} f_{i}^{(n)}$) of $\mathcal{U}$. Let $\mathcal{U}_{\mathbb{Z},+}$ (resp. $\mathcal{U}_{\mathbb{Z},-}$) be a subring of $\mathcal{U}(\mathfrak{n}^{+})$ (resp. of $\mathcal{U}(\mathfrak{n}^{-})$) generated by $\mathcal{U}_{i,+}$ (resp. $\mathcal{U}_{i,-}$) for all $i\in I$. Then $\mathcal{U}_{\mathbb{Z},+}$ and $\mathcal{U}_{\mathbb{Z},-}$ are the $\mathbb{Z}$-subalgebras of $\mathcal{U}(\mathfrak{n}^{+})$ and  $\mathcal{U}(\mathfrak{n}^{-})$, respectively \cite[p.556]{Ti9}. Let $\mathcal{U}_{\mathbb{Z},0}$ be the $\mathbb{Z}$-subalgebra of the universal enveloping algebra $S(\mathfrak{h})$ generated by $\binom {\lambda}{n}$ ($\lambda\in \mathfrak{t}^{\vee}$) and $\mathcal{U}_{\mathbb{Z}}$ be  the $\mathbb{Z}$-subalgebra of the universal enveloping algebra $\mathcal{U}$ generated by  $\mathcal{U}_{i,+}$, $\mathcal{U}_{i,-}$ and $\binom {\lambda}{n}$ for $i\in I$, $n\in \mathbb{Z}_{\ge 0}$ and $\lambda\in \mathfrak{t}^{\vee}$.  
 
 We state the following result from \cite[p.106]{MT} and \cite{Ti11} without giving its proof.
\begin{proposition}\label{propdec} We have the following
\begin{enumerate}[before=\itshape,font=\normalfont]
\item[(i)] $\mathcal{U}_{\mathbb{Z},+}$, $\mathcal{U}_{\mathbb{Z},-}$ and $\mathcal{U}_{\mathbb{Z},0}$ are the $\mathbb{Z}$-forms of $\mathcal{U}(\mathfrak{n}^{+})$, $\mathcal{U}(\mathfrak{n}^{-})$ and $S(\mathfrak{h})$, respectively.
\item[(ii)] $\mathcal{U}_{\mathbb{Z}}$ is the $\mathbb{Z}$-form of $\mathcal{U}$.
\item[(iii)] The product map 
\begin{eqnarray}
\mathcal{U}_{\mathbb{Z},-}\otimes\mathcal{U}_{\mathbb{Z},0}\otimes \mathcal{U}_{\mathbb{Z},+}\longrightarrow \mathcal{U}_{\mathbb{Z}} 
\end{eqnarray}
 is an isomorphism of $\mathbb{Z}$-modules.
\end{enumerate}
\end{proposition}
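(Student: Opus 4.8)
The plan is to follow Kostant's method for constructing integral forms of enveloping algebras, adapted to the Kac--Moody setting as carried out by Tits and Mathieu \cite{Ti9,Ti11,MT}.

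\emph{Part (i), nilpotent and Cartan factors.} Fix a $\mathbb{Z}$-form $\mathfrak{g}_{\mathbb{Z}}$ of $\mathfrak{g}$, i.e.\ an integral basis consisting of root vectors $e_{\beta}$ for $\beta\in\Delta^{re}_{+}$ together with an integral basis of each imaginary root space, chosen so that the Chevalley commutator formula expresses $[e_{\beta},e_{\gamma}]$ and $\operatorname{ad}(e_i)^{n}(e_{\beta})/n!$ with integer coefficients. Using these relations one shows, by induction on the height of roots, that the $\mathbb{Z}$-span $M^{+}$ of the ordered PBW monomials in the divided powers $e_{\beta}^{(n)}$ (and suitable monomials in the imaginary root spaces) is closed under multiplication and contains every $e_{i}^{(n)}$; hence $M^{+}=\mathcal{U}_{\mathbb{Z},+}$. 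Since these monomials are $\mathbb{C}$-linearly independent by the PBW theorem for $\mathcal{U}(\mathfrak{n}^{+})$, the module $M^{+}$ is free over $\mathbb{Z}$ and injects into $\mathcal{U}(\mathfrak{n}^{+})$, while $M^{+}\otimes\mathbb{C}\to\mathcal{U}(\mathfrak{n}^{+})$ is onto because $e_{i}\in M^{+}$; this gives the $\mathbb{Z}$-form statement, the case of $\mathfrak{n}^{-}$ being symmetric. For $\mathcal{U}_{\mathbb{Z},0}$ one invokes Kostant's lemma on binomial-coefficient monomials: the elements $\binom{\lambda}{n}$ generate over $\mathbb{Z}$ the free module on the ordered products $\prod_{i}\binom{\alpha_{i}^{\vee}}{n_{i}}$, which is a $\mathbb{Z}$-form of the relevant symmetric algebra.

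\emph{Parts (ii) and (iii).} The multiplication map $\mathcal{U}_{\mathbb{Z},-}\otimes\mathcal{U}_{\mathbb{Z},0}\otimes\mathcal{U}_{\mathbb{Z},+}\to\mathcal{U}$ is injective: by (i) the three factors are free, hence flat, $\mathbb{Z}$-modules injecting into $\mathcal{U}(\mathfrak{n}^{-})$, $S(\mathfrak{h})$, $\mathcal{U}(\mathfrak{n}^{+})$, so their tensor product injects into $\mathcal{U}(\mathfrak{n}^{-})\otimes S(\mathfrak{h})\otimes\mathcal{U}(\mathfrak{n}^{+})$, which maps isomorphically onto $\mathcal{U}$ by the triangular decomposition of $\mathcal{U}$ (the Tits bijection quoted above, in the opposite order). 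Its image lies in $\mathcal{U}_{\mathbb{Z}}$ by construction, so what remains is the reverse inclusion $\mathcal{U}_{\mathbb{Z}}\subseteq\mathcal{U}_{\mathbb{Z},-}\cdot\mathcal{U}_{\mathbb{Z},0}\cdot\mathcal{U}_{\mathbb{Z},+}$: one shows that any product of the generators $e_{i}^{(m)},f_{j}^{(n)},\binom{\lambda}{k}$ can be straightened into the canonical order (all $f$'s, then the Cartan part, then all $e$'s) using integral relations — the commutations $\binom{\lambda}{k}e_{i}^{(m)}=e_{i}^{(m)}\binom{\lambda-m\alpha_{i}(\lambda)}{k}$ and their $f_{j}$ analogues, together with Kostant's identity $e_{i}^{(m)}f_{i}^{(n)}=\sum_{t\ge 0}f_{i}^{(n-t)}\binom{\alpha_{i}^{\vee}-m-n+2t}{t}e_{i}^{(m-t)}$ and the analogous integral formula for $e_{i}^{(m)}f_{j}^{(n)}$ with $i\ne j$. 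Since every coefficient that appears is an integer, the straightening stays inside $\mathcal{U}_{\mathbb{Z},-}\cdot\mathcal{U}_{\mathbb{Z},0}\cdot\mathcal{U}_{\mathbb{Z},+}$, which proves (iii); then (ii) is immediate, $\mathcal{U}_{\mathbb{Z}}$ being a tensor product of free $\mathbb{Z}$-modules with $\mathcal{U}_{\mathbb{Z}}\otimes\mathbb{C}=\mathcal{U}$.

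\emph{Main obstacle.} The delicate points are the closure and straightening steps in full Kac--Moody generality: one must produce a workable integral basis of each imaginary root space and verify that divided powers and products involving imaginary root vectors remain in $\mathcal{U}_{\mathbb{Z},+}$, and that the mixed $e$--$f$ straightening stays integral beyond rank one. This is exactly the subtle ingredient treated by Garland in the affine case and by Tits and Mathieu in general, which is why the proposition is quoted here rather than reproved.
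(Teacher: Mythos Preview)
The paper does not give a proof of this proposition at all: it is stated with the sentence ``We state the following result from \cite[p.106]{MT} and \cite{Ti11} without giving its proof,'' and then used as a black box. Your proposal is a reasonable sketch of the Kostant--Tits--Mathieu argument contained in those references, and you yourself correctly observe at the end that this is ``why the proposition is quoted here rather than reproved.'' So there is nothing to compare against; your write-up supplies more detail than the paper does.
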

The $\mathbb{Z}$-forms of $\mathcal{U}$ and its subalgebras will be used to define the Kac-Moody algebra $\mathfrak{g}_{\mathbb{K}}$ over an arbitrary field $\mathbb{K}$. Let $\mathfrak{g}_{\mathbb{Z}}=\mathfrak{g}\cap \mathcal{U}_{\mathbb{Z}}$, $\mathfrak{n}^{\pm}_{\mathbb{Z}}=\mathfrak{n}^{\pm}\cap \mathcal{U}_{\mathbb{Z}}$ and $\mathfrak{t}^{\vee}_{\mathbb{Z}}=\mathfrak{t}^{\vee}\cap \mathcal{U}_{\mathbb{Z},0}$. The following proposition on page 78 of \cite{MT} implies that $\mathfrak{g}_{\mathbb{Z}}$ is $\mathbb{Z}$-form of $\mathfrak{g}$.

\begin{proposition} The sum map\\
 $$\mathfrak{n}^{+}_{\mathbb{Z}}\oplus \mathfrak{t}^{\vee}_{\mathbb{Z}}\oplus \mathfrak{n}^{-}_{\mathbb{Z}}\longrightarrow \mathfrak{g}_{\mathbb{Z}} $$
is a bijection.
 \end{proposition}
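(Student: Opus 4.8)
The plan is to deduce the proposition from Proposition~\ref{propdec}, Tits' triangular (PBW) factorization of $\mathcal{U}$, and the $Q$-grading of $\mathcal{U}$. Injectivity will be immediate: the three summands sit inside the three pieces $\mathfrak{n}^{-}$, $\mathfrak{h}$ (recall $\mathfrak{t}^{\vee}\subseteq\mathfrak{h}$) and $\mathfrak{n}^{+}$ of the triangular decomposition (\ref{Tridec}) of $\mathfrak{g}$, so the sum map is injective already as a map of $\mathbb{C}$-vector spaces, and its image lands in $\mathfrak{g}_{\mathbb{Z}}=\mathfrak{g}\cap\mathcal{U}_{\mathbb{Z}}$ because $\mathfrak{n}^{\pm}_{\mathbb{Z}}\subseteq\mathcal{U}_{\mathbb{Z}}$ and $\mathfrak{t}^{\vee}_{\mathbb{Z}}\subseteq\mathcal{U}_{\mathbb{Z},0}\subseteq\mathcal{U}_{\mathbb{Z}}$ by definition while $\mathcal{U}_{\mathbb{Z}}$ is an additive subgroup. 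All the content is in surjectivity.

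For surjectivity I would first reduce to root spaces. The algebra $\mathcal{U}$ is $Q$-graded under the adjoint $\mathfrak{h}$-action, $\mathcal{U}=\bigoplus_{\beta\in Q}\mathcal{U}_{\beta}$, and $\mathcal{U}_{\mathbb{Z}}$ is a graded $\mathbb{Z}$-submodule because it is generated by the weight vectors $e_{i}^{(n)}$, $f_{i}^{(n)}$, $\binom{\lambda}{n}$ (of weights $n\alpha_{i}$, $-n\alpha_{i}$, $0$); hence $\mathcal{U}_{\mathbb{Z}}=\bigoplus_{\beta}(\mathcal{U}_{\mathbb{Z}}\cap\mathcal{U}_{\beta})$. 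Given $x\in\mathfrak{g}_{\mathbb{Z}}$, write its root-space decomposition $x=\sum_{\alpha}x_{\alpha}$ with $x_{\alpha}\in\mathfrak{g}_{\alpha}$ and $x_{0}\in\mathfrak{g}_{0}=\mathfrak{h}$. Since the $x_{\alpha}$ are precisely the $Q$-homogeneous components of $x$ in $\mathcal{U}$ and $\mathcal{U}_{\mathbb{Z}}$ is graded, each $x_{\alpha}\in\mathfrak{g}_{\alpha}\cap\mathcal{U}_{\mathbb{Z}}$. It therefore suffices to prove $\mathfrak{g}_{\alpha}\cap\mathcal{U}_{\mathbb{Z}}\subseteq\mathfrak{n}^{+}_{\mathbb{Z}}$ for $\alpha\in\Delta_{+}$, the symmetric statement for $\alpha\in\Delta_{-}$, and $\mathfrak{h}\cap\mathcal{U}_{\mathbb{Z}}=\mathfrak{t}^{\vee}_{\mathbb{Z}}$.

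The next step is to make the $\mathbb{Z}$-form compatible with the triangular factorization. By PBW, Tits' bijection gives an isomorphism $\mathcal{U}(\mathfrak{n}^{-})\otimes S(\mathfrak{h})\otimes\mathcal{U}(\mathfrak{n}^{+})\cong\mathcal{U}$, and Proposition~\ref{propdec}(iii) says it restricts to $\mathcal{U}_{\mathbb{Z},-}\otimes\mathcal{U}_{\mathbb{Z},0}\otimes\mathcal{U}_{\mathbb{Z},+}\cong\mathcal{U}_{\mathbb{Z}}$. Choosing $\mathbb{Z}$-bases of the three integral factors, each containing $1$, Proposition~\ref{propdec}(i) makes these $\mathbb{C}$-bases of $\mathcal{U}(\mathfrak{n}^{-})$, $S(\mathfrak{h})$, $\mathcal{U}(\mathfrak{n}^{+})$; the products $a\otimes b\otimes c$ then form a $\mathbb{C}$-basis of $\mathcal{U}$ in which every element of $\mathcal{U}_{\mathbb{Z}}$ has integer coordinates, while an element of $\mathcal{U}(\mathfrak{n}^{+})=1\otimes1\otimes\mathcal{U}(\mathfrak{n}^{+})$ is supported on the indices with $a=b=1$. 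Comparing expansions yields $\mathcal{U}_{\mathbb{Z}}\cap\mathcal{U}(\mathfrak{n}^{+})=\mathcal{U}_{\mathbb{Z},+}$, and likewise $\mathcal{U}_{\mathbb{Z}}\cap\mathcal{U}(\mathfrak{n}^{-})=\mathcal{U}_{\mathbb{Z},-}$ and $\mathcal{U}_{\mathbb{Z}}\cap S(\mathfrak{h})=\mathcal{U}_{\mathbb{Z},0}$. Since $\mathfrak{g}_{\alpha}\subseteq\mathfrak{n}^{+}\subseteq\mathcal{U}(\mathfrak{n}^{+})$ for $\alpha\in\Delta_{+}$, the first identity gives $\mathfrak{g}_{\alpha}\cap\mathcal{U}_{\mathbb{Z}}\subseteq\mathfrak{n}^{+}\cap\mathcal{U}_{\mathbb{Z}}=\mathfrak{n}^{+}_{\mathbb{Z}}$ (and symmetrically for $\Delta_{-}$), while the third gives $\mathfrak{h}\cap\mathcal{U}_{\mathbb{Z}}=\mathfrak{h}\cap\mathcal{U}_{\mathbb{Z},0}$.

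It then remains to identify $\mathfrak{h}\cap\mathcal{U}_{\mathbb{Z},0}$ with $\mathfrak{t}^{\vee}_{\mathbb{Z}}=\mathfrak{t}^{\vee}\cap\mathcal{U}_{\mathbb{Z},0}$: the inclusion $\supseteq$ is the definition, and for $\subseteq$ one notes that $\mathcal{U}_{\mathbb{Z},0}$, being generated by the polynomials $\binom{\lambda}{n}$ in elements $\lambda$ of $\mathfrak{t}^{\vee}$, lies in $S(\mathfrak{t}^{\vee})$, whose degree-$\le1$ part meets $\mathfrak{h}$ in exactly $\mathfrak{t}^{\vee}$, so $\mathfrak{h}\cap\mathcal{U}_{\mathbb{Z},0}\subseteq\mathfrak{h}\cap S(\mathfrak{t}^{\vee})=\mathfrak{t}^{\vee}$. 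Assembling the three reductions, every $x\in\mathfrak{g}_{\mathbb{Z}}$ splits as $\sum_{\alpha>0}x_{\alpha}\in\mathfrak{n}^{+}_{\mathbb{Z}}$, $\sum_{\alpha<0}x_{\alpha}\in\mathfrak{n}^{-}_{\mathbb{Z}}$ and $x_{0}\in\mathfrak{t}^{\vee}_{\mathbb{Z}}$, hence lies in the image of the sum map, which proves surjectivity. I expect this last step to be the main obstacle: one must keep careful track of the distinction between $\mathfrak{h}$ and its coroot part $\mathfrak{t}^{\vee}$ (they coincide only when $A$ is invertible) and verify that the Chevalley $\mathbb{Z}$-form $\mathcal{U}_{\mathbb{Z},0}$ of the Cartan meets $\mathfrak{h}$ in $\mathfrak{t}^{\vee}_{\mathbb{Z}}$ and not in a larger subgroup — equivalently, reconcile the two descriptions of $\mathcal{U}_{\mathbb{Z},0}$ used in Proposition~\ref{propdec}(i) and in its definition. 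This is precisely the technical point extracted from \cite{MT}.
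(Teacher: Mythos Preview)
The paper does not give its own proof of this proposition: it is stated with an attribution to \cite[p.~78]{MT} and used without argument. Your proposal therefore supplies a proof where the paper offers none, and the approach you outline---reduce to $Q$-homogeneous components using that $\mathcal{U}_{\mathbb{Z}}$ is generated by weight vectors, then invoke the integral triangular decomposition of Proposition~\ref{propdec}(iii) together with the complex PBW isomorphism to identify $\mathcal{U}_{\mathbb{Z}}\cap\mathcal{U}(\mathfrak{n}^{\pm})=\mathcal{U}_{\mathbb{Z},\pm}$ and $\mathcal{U}_{\mathbb{Z}}\cap S(\mathfrak{h})=\mathcal{U}_{\mathbb{Z},0}$---is the standard textbook route and is correct.

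Two remarks. First, a small redundancy: once the $Q$-grading gives $x_{\alpha}\in\mathfrak{g}_{\alpha}\cap\mathcal{U}_{\mathbb{Z}}$ for $\alpha\in\Delta_{+}$, the inclusion $x_{\alpha}\in\mathfrak{n}^{+}_{\mathbb{Z}}$ is immediate from the \emph{definition} $\mathfrak{n}^{+}_{\mathbb{Z}}=\mathfrak{n}^{+}\cap\mathcal{U}_{\mathbb{Z}}$, so the PBW basis argument is really only needed for the Cartan piece. Second, you are right to flag the $\mathfrak{h}$ versus $\mathfrak{t}^{\vee}$ issue as the genuine subtlety: as written in the paper, $\mathcal{U}_{\mathbb{Z},0}$ is generated inside $S(\mathfrak{t}^{\vee})$, so it cannot literally be a $\mathbb{Z}$-form of $S(\mathfrak{h})$ when $A$ is singular; one has to read $\mathfrak{t}^{\vee}$ as the integral coroot lattice (and $S(\mathfrak{h})$ as $S(\mathfrak{t}^{\vee}_{\mathbb C})$) for Proposition~\ref{propdec}(i) and your final step $\mathfrak{h}\cap\mathcal{U}_{\mathbb{Z},0}\subseteq\mathfrak{t}^{\vee}$ to be consistent. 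With that reading, your argument goes through.
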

\noindent For a field $\mathbb{K}$, set
\begin{eqnarray*}
\mathfrak{n}^{\pm}_{\mathbb{K}}&:=&\mathfrak{n}^{\pm}_{\mathbb{Z}}\otimes\mathbb{K},\; \mathfrak{t}^{\vee}_{\mathbb{K}}:=\mathfrak{t}^{\vee}_{\mathbb{Z}}\otimes \mathbb{K},\;\mathfrak{g}_{\mathbb{K}}:=\mathfrak{g}_{\mathbb{Z}}\otimes\mathbb{K},\\
\mathcal{U}_{\mathbb{K},\pm}&:=&\mathcal{U}_{\mathbb{Z},\pm}\otimes \mathbb{K},\; \mathcal{U}_{\mathbb{K},0}:=\mathcal{U}_{\mathbb{Z},0}\otimes \mathbb{K},\;\mathcal{U}_{\mathbb{K}}:=\mathcal{U}_{\mathbb{Z}}\otimes \mathbb{K}.
 \end{eqnarray*}
Then $\mathfrak{g}_{\mathbb{K}}$ is a Kac-Moody algebra over $\mathbb{K}$ and it admits the root spaces decomposition
\begin{eqnarray}
\mathfrak{g}_{\mathbb{K}}=\mathfrak{t}^{\vee}_{\mathbb{K}}\oplus (\oplus_{\alpha\in \Delta} \mathfrak{g}_{\alpha,\mathbb{K}}),
\end{eqnarray}
where for each $\alpha\in \Delta$, $\mathfrak{g}_{\alpha,\mathbb{K}}=(\mathfrak{g}_{\alpha}\cap \mathfrak{g}_{\mathbb{Z}})\otimes  \mathbb{K}$.

\subsubsection{Minimal Kac-Moody Group}\label{GCgroup}
As in Steinberg's presentation of Chevalley groups, the second essential ingredient in Carbone and Garland's construction of Kac-Moody groups is an integrable representation having a stable lattice. We begin with a description of this lattice.

Let $V=V^{\lambda}$ be an integrable highest weight representation with the highest weight ${\lambda}$ and the highest weight vector $v_{\lambda}$. As in the finite dimensional case \cite{St}, a $\mathbb{Z}$-lattice $V_{\mathbb{Z}}$ is constructed by setting $V_{\mathbb{Z}}=\mathcal{U}_{\mathbb{Z}}v_{\lambda}$. The following lemma gives a more concrete description of the lattice $V_{\mathbb{Z}}$.
\begin{lemma}
We have
\begin{eqnarray}
V_{\mathbb{Z}}=\mathcal{U}_{\mathbb{Z},-}(v_{\lambda}).
\end{eqnarray}
\end{lemma}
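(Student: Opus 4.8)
The plan is to show the two inclusions $V_{\mathbb{Z}} \subseteq \mathcal{U}_{\mathbb{Z},-}(v_\lambda)$ and $\mathcal{U}_{\mathbb{Z},-}(v_\lambda) \subseteq V_{\mathbb{Z}}$. The second inclusion is immediate: since $\mathcal{U}_{\mathbb{Z},-}$ is a $\mathbb{Z}$-subalgebra of $\mathcal{U}_{\mathbb{Z}}$ (Proposition~\ref{propdec}), we have $\mathcal{U}_{\mathbb{Z},-}(v_\lambda) \subseteq \mathcal{U}_{\mathbb{Z}}v_\lambda = V_{\mathbb{Z}}$ by the very definition of $V_{\mathbb{Z}}$. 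So the content is entirely in the forward inclusion.

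For $V_{\mathbb{Z}} \subseteq \mathcal{U}_{\mathbb{Z},-}(v_\lambda)$, I would invoke the triangular decomposition of the integral enveloping algebra, part (iii) of Proposition~\ref{propdec}: the product map $\mathcal{U}_{\mathbb{Z},-}\otimes \mathcal{U}_{\mathbb{Z},0}\otimes \mathcal{U}_{\mathbb{Z},+} \to \mathcal{U}_{\mathbb{Z}}$ is an isomorphism of $\mathbb{Z}$-modules. Hence every element of $\mathcal{U}_{\mathbb{Z}}$ is a $\mathbb{Z}$-linear combination of products $u_{-}u_{0}u_{+}$ with $u_{\pm}\in \mathcal{U}_{\mathbb{Z},\pm}$ and $u_{0}\in \mathcal{U}_{\mathbb{Z},0}$. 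Applying such an element to $v_\lambda$, I would use the highest-weight axioms in two stages. First, since $\mathfrak{n}^{+}v_\lambda = 0$, the generators $e_i^{(n)}$ with $n \geq 1$ kill $v_\lambda$, and $\mathcal{U}_{\mathbb{Z},+}$ is generated by these together with $1$; so $u_{+}v_\lambda$ is an integer multiple of $v_\lambda$ — concretely $u_+ v_\lambda = c\, v_\lambda$ for the integer $c$ equal to the coefficient of $1$ in $u_+$. Second, $\mathcal{U}_{\mathbb{Z},0}$ is generated by the elements $\binom{\lambda'}{n}$ for $\lambda' \in \mathfrak{t}^\vee$, and on the highest weight vector $h.v_\lambda = \lambda(h)v_\lambda$, so $\binom{\lambda'}{n}v_\lambda = \binom{\lambda(\lambda')}{n}v_\lambda$, which is an \emph{integer} multiple of $v_\lambda$ because $\lambda$ is an integral weight (so $\lambda(\lambda') \in \mathbb{Z}$ and the binomial coefficient of an integer is an integer). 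Therefore $u_{0}u_{+}v_\lambda \in \mathbb{Z}\,v_\lambda$, and consequently $u_{-}u_{0}u_{+}v_\lambda \in \mathcal{U}_{\mathbb{Z},-}(v_\lambda)$. Summing over the $\mathbb{Z}$-linear combination gives $V_{\mathbb{Z}} = \mathcal{U}_{\mathbb{Z}}v_\lambda \subseteq \mathcal{U}_{\mathbb{Z},-}(v_\lambda)$.

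The main obstacle — really the only delicate point — is verifying that $u_{0}v_\lambda \in \mathbb{Z} v_\lambda$ rather than merely $\mathbb{C}v_\lambda$; this is exactly where integrality of $\lambda$ and the fact that $\mathcal{U}_{\mathbb{Z},0}$ is generated by divided-power-type elements $\binom{\lambda'}{n}$ (not by arbitrary elements of $S(\mathfrak{h})$) are essential, and it is the reason the lattice $\mathcal{U}_{\mathbb{Z}}$ is set up with those generators in the first place. One should also be slightly careful that $\mathcal{U}_{\mathbb{Z},+}$ acting on $v_\lambda$ lands in $\mathbb{Z}v_\lambda$: this follows because $\mathcal{U}_{\mathbb{Z},+}$ is the $\mathbb{Z}$-subalgebra generated by the $e_i^{(n)}$, and any word in these with at least one factor $e_i^{(n)}$, $n\geq 1$, annihilates $v_\lambda$ by axiom (i), while the constant term contributes an integer. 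Everything else is bookkeeping with the decomposition in Proposition~\ref{propdec}(iii).
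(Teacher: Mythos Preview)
Your argument is correct and is the standard one: use the integral triangular decomposition $\mathcal{U}_{\mathbb{Z}}\cong\mathcal{U}_{\mathbb{Z},-}\otimes\mathcal{U}_{\mathbb{Z},0}\otimes\mathcal{U}_{\mathbb{Z},+}$ from Proposition~\ref{propdec}(iii), then observe that $\mathcal{U}_{\mathbb{Z},+}$ kills $v_\lambda$ modulo its constant term and that $\mathcal{U}_{\mathbb{Z},0}$ acts on $v_\lambda$ by integers because $\lambda$ is integral. The paper itself states this lemma without proof (it is recorded as a standard fact en route to defining the admissible lattice, in the spirit of the finite-dimensional case in \cite{St}), so there is no alternative argument in the paper to compare against; your proposal simply supplies the expected verification.

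One small remark on precision: when you write ``$\lambda'\in\mathfrak{t}^\vee$'' you should mean the $\mathbb{Z}$-span of the simple coroots (the coroot lattice), since $\binom{\lambda(\lambda')}{n}\in\mathbb{Z}$ requires $\lambda(\lambda')\in\mathbb{Z}$, which is guaranteed by integrality of $\lambda$ only for $\lambda'$ in that lattice. The paper's notation here is somewhat loose, so this is not a defect in your argument but worth stating explicitly.
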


\begin{corollary}
The space $V_{\mathbb{Z}}$ is a $\mathbb{Z}$-form and an {\it admissible lattice} of $V$, that is, for $i\in I$ and for some $n\ge 0$
 \begin{eqnarray*}
 e_{i}^{(n)}V_{\mathbb{Z}}\subset V_{\mathbb{Z}}; \; \; f_{i}^{(n)}V_{\mathbb{Z}}\subset V_{\mathbb{Z}}.
  \end{eqnarray*}
\end{corollary}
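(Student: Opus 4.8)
The plan is to deduce the statement from the lemma just above, $V_{\mathbb{Z}} = \mathcal{U}_{\mathbb{Z},-}(v_\lambda)$, together with Proposition~\ref{propdec}(iii). First I would verify the $\mathbb{Z}$-form assertion: since $V = \mathcal{U}(\mathfrak{n}^{-})v_\lambda$ by property (iii) of a highest weight representation and property (i) forces $\mathfrak{n}^{+}v_\lambda = 0$ (so that $\mathcal{U} v_\lambda = \mathcal{U}(\mathfrak{n}^{-})v_\lambda$), the inclusion $V_{\mathbb{Z}} \otimes \mathbb{C} \to V$ is surjective because $\mathcal{U}_{\mathbb{Z},-}$ is a $\mathbb{Z}$-form of $\mathcal{U}(\mathfrak{n}^{-})$ by Proposition~\ref{propdec}(i). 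Injectivity (i.e.\ that $V_{\mathbb{Z}}$ contains no $\mathbb{Z}$-torsion obstructing the tensor, equivalently that a $\mathbb{Z}$-basis of $V_{\mathbb{Z}}$ remains $\mathbb{C}$-linearly independent in $V$) follows because $V_{\mathbb{Z}}$ is a $\mathbb{Z}$-submodule of the $\mathbb{C}$-vector space $V$, hence torsion-free, and one checks rank by comparing weight-space by weight-space: for each $\mu \in P_\lambda$, $V_{\mathbb{Z}} \cap V_\mu$ is a finitely generated free $\mathbb{Z}$-module whose rank equals $\dim_{\mathbb{C}} V_\mu$, since $\mathcal{U}_{\mathbb{Z},-}$ spans $\mathcal{U}(\mathfrak{n}^{-})$ over $\mathbb{C}$ and the weight grading is preserved.

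For the admissibility (stability) assertion, the key point is that $V_{\mathbb{Z}} = \mathcal{U}_{\mathbb{Z}} v_\lambda$ (the original definition, before the lemma's simplification) is manifestly stable under $\mathcal{U}_{\mathbb{Z}}$, and in particular under its generators $e_i^{(n)}$ and $f_i^{(n)}$: indeed $e_i^{(n)} V_{\mathbb{Z}} = e_i^{(n)} \mathcal{U}_{\mathbb{Z}} v_\lambda \subseteq \mathcal{U}_{\mathbb{Z}} v_\lambda = V_{\mathbb{Z}}$ because $e_i^{(n)} \in \mathcal{U}_{i,+} \subset \mathcal{U}_{\mathbb{Z}}$ and $\mathcal{U}_{\mathbb{Z}}$ is a ring; similarly for $f_i^{(n)}$. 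So the divided-power stability is essentially immediate once one reconciles the two descriptions of $V_{\mathbb{Z}}$ via the lemma and Proposition~\ref{propdec}(iii): writing $\mathcal{U}_{\mathbb{Z}} = \mathcal{U}_{\mathbb{Z},-}\,\mathcal{U}_{\mathbb{Z},0}\,\mathcal{U}_{\mathbb{Z},+}$ and using $\mathcal{U}_{\mathbb{Z},+} v_\lambda \subseteq \mathbb{Z} v_\lambda$ (from $\mathfrak{n}^{+} v_\lambda = 0$ and $e_i^{(0)} = 1$) and $\mathcal{U}_{\mathbb{Z},0} v_\lambda \subseteq \mathbb{Z} v_\lambda$ (since $\binom{\lambda}{n}$ acts on $v_\lambda$ by the integer $\binom{\langle \lambda, -\rangle}{n}$, as $\lambda \in \Lambda_+$ is integral), we get $\mathcal{U}_{\mathbb{Z}} v_\lambda = \mathcal{U}_{\mathbb{Z},-} v_\lambda$, matching the lemma.

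The main obstacle I expect is the integrality of the torus action, i.e.\ checking that $\binom{\lambda}{n}$ — and more generally the elements $\binom{h}{n}$ for $h \in \mathfrak{t}^{\vee}_{\mathbb{Z}}$ — act by integers on all of $V_{\mathbb{Z}}$, not just on $v_\lambda$; this requires knowing that every weight $\mu \in P_\lambda$ is integral, which holds since $\mu = \lambda - \sum n_i \alpha_i$ and the simple roots are integral on $\mathfrak{t}^{\vee}_{\mathbb{Z}}$. A secondary technical point is confirming that the generation of $V_{\mathbb{Z}}$ by $\mathcal{U}_{\mathbb{Z},-}$ is compatible with the weight decomposition finitely in each weight space — this is where integrability of $V$ (local nilpotence of the $e_i, f_i$) is used, guaranteeing each $V_\mu$ is finite-dimensional and the relevant lattices are finitely generated. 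Everything else is formal manipulation of $\mathbb{Z}$-forms via Proposition~\ref{propdec}.
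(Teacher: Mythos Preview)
The paper states this corollary without proof, treating it as an immediate consequence of the preceding lemma $V_{\mathbb{Z}} = \mathcal{U}_{\mathbb{Z},-}(v_\lambda)$ and the ambient setup (Proposition~\ref{propdec}). Your proposal correctly supplies exactly the argument the paper leaves implicit: admissibility is immediate from the original definition $V_{\mathbb{Z}} = \mathcal{U}_{\mathbb{Z}} v_\lambda$ since $e_i^{(n)}, f_i^{(n)} \in \mathcal{U}_{\mathbb{Z}}$ and $\mathcal{U}_{\mathbb{Z}}$ is a ring, while the $\mathbb{Z}$-form assertion follows from the lemma together with Proposition~\ref{propdec}(i).

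One minor caution on your injectivity step: torsion-freeness of $V_{\mathbb{Z}}$ alone does not guarantee that $V_{\mathbb{Z}} \otimes \mathbb{C} \to V$ is injective (a free $\mathbb{Z}$-module sitting inside a $\mathbb{C}$-vector space can have $\mathbb{Z}$-rank exceeding the ambient $\mathbb{C}$-dimension, e.g.\ $\mathbb{Z} \oplus \mathbb{Z}\sqrt{2} \subset \mathbb{C}$). The clean way to close this is to observe that the graded pieces $(\mathcal{U}_{\mathbb{Z},-})_{\gamma}$ are $\mathbb{Z}$-forms of the finite-dimensional spaces $\mathcal{U}(\mathfrak{n}^{-})_{\gamma}$, so the surjection $(\mathcal{U}_{\mathbb{Z},-})_{\mu-\lambda} \twoheadrightarrow V_{\mu,\mathbb{Z}}$ sits inside the $\mathbb{C}$-surjection $\mathcal{U}(\mathfrak{n}^{-})_{\mu-\lambda} \twoheadrightarrow V_\mu$, and the kernel of the former tensors up to the kernel of the latter (both being defined by the same relations coming from the maximal submodule, which is itself defined over $\mathbb{Z}$ via the integrality of $\lambda$). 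This is the standard Kostant--Garland argument and is what the paper is tacitly invoking; your outline is in the right direction but would benefit from naming this point explicitly rather than appealing only to rank-counting.
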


For each weight $\mu$ of $V$ and the corresponding weight space $V_{\mu}$, we set $V_{\mu, \mathbb{Z}}=V_{\mu}\cap V_{\mathbb{Z}}$. Then $V_{\mathbb{Z}}=\oplus_{\mu\in P_{\lambda}}V_{\mathbb{Z},\mu}.$  For a field $\mathbb{K}$, let $V_{\mathbb{K}}:=V_{\mathbb{Z}}\otimes \mathbb{K},\;\;V_{\mathbb{K},\mu}:=V_{\mathbb{Z},\mu}\otimes \mathbb{K}$
and for $t\in \mathbb{K}$
 \begin{eqnarray}
 \chi_{\alpha_{i}}(t):=\sum_{n\ge 0}t^{n}e_{i}^{(n)}=e^{te_{i}},\;\; \chi_{-\alpha_{i}}(t) :=\sum_{n\ge 0}t^{n}f_{i}^{(n)}=e^{tf_{i}}.
\end{eqnarray} 
Since $e_{i}$ and $f_{i}$ act as locally nilpotent operators, $ \chi_{\pm\alpha_{i}}(t)$ are well defined automorphisms of $V_{\mathbb{K}}$. The minimal Kac-Moody group $G(\mathbb{K})$ of Carbone and Garland is the subgroup of $\Aut(V_{\mathbb{K}})$ generated by the elements  $\chi_{\pm \alpha_{i}}(t)$ with $ t\in\mathbb{K}$ and $i\in I$ \cite[Section 5]{CG}. Similar to the Chevalley groups \cite[Lemma 27]{St}, the group constructed above depends on the integrable highest weight representation $V$ and the choice of an admissible lattice in $V$. 

 \subsubsection{Tits Axioms and $BN$-Pairs}
 In what follows, we shall consider Carbone-Garland's Kac-Moody group $G(\mathbb{K})$ over a field $\mathbb{K}$ and denote it by $G$, by dropping $\mathbb{K}$ from our notation.
 For $i\in I$ and $t\in \mathbb{K}^{\ast}$, set 
  \begin{eqnarray}
w_{i}(t)&:=&\chi_{\alpha}(t)\chi_{-\alpha}(-t^{-1})\chi_{\alpha}(t),\;\;\;   \tilde{w}_{i}:=w_{i}(1)\label{relationtorus1}\\
h_{i}(t)&:=& \tilde{w}_{i}(t) \tilde{w}_{i}^{-1}.\label{relationtorus2}
 \end{eqnarray}
Let $H$ be the subgroup generated by the elements $h_{i}(t)$ for all $i\in I$ and $t\in \mathbb{K}^{\ast}$. Let $\alpha\in \Delta^{re}$ then $\alpha=w\alpha_{i}$ form some $w\in W$ and simple root $\alpha_{i}$, $i\in I$. For $t\in \mathbb{K}$, we set 
\begin {eqnarray}
\chi_{\alpha}(t)=w\chi_{\alpha_{i}}(t)w^{-1}
\end{eqnarray} 
One can check that for $t\in\mathbb{K}$, we have $\chi_{\alpha}(t)\in \Aut(V_{\mathbb{K}})$.
Associated with $\alpha\in \Delta^{re}$, a {\it root group} is defined as,
 $$U_{\alpha}=\{\chi_{\alpha}(t)\mid  t\in \mathbb{K}\}.$$
 Continuing with $\alpha\in \Delta^{re}_{\pm}$, let $B^{\pm}_{\alpha}$ be the group generated by $H$ and $U_{\alpha}$; $G_{\alpha}$ be the group generated by $B^{\pm}_{\alpha}$; and $B^{\pm}$ be the group generated by $B_{\alpha}$ for all $\alpha\in \Delta^{re}_{\pm}$. 
 
\noindent The following properties of these subgroups can be verified easily.
 \begin{enumerate}[before=\itshape,font=\normalfont]
\item[ (RD1)] Let $\alpha, \beta$ be a prenilpotent pair then $[U_{\alpha},U_{\beta}]\subset \langle\; U_{\gamma}\mid \gamma\in ]\alpha,\beta[\; \rangle$.
\item[ (RD2)]  For each $i\in I$, $B^{+}_{\alpha_{i}}\cap B^{-}_{\alpha_{i}}=H$.
 \item[ (RD3)] The group $B^{+}_{\alpha_{i}}$ has two double cosets in $G_{\alpha_{i}}$.
\item[ (RD4)] For each $i\in I$ and $\beta\in \Delta^{re}$, there exists an element $s_{i}\in G_{\alpha_{i}}$ such that $s_{i}B_{\alpha}s_{i}^{-1}=B_{w_{i}\alpha}$.
 \item[ (RD5)]  For each $i\in I$, $B^{+}_{\alpha_{i}}$ is not contained in $B^{-}$ and $B^{-}_{\alpha_{i}}$ is not contained in $B^{+}$.
\end{enumerate}
Let $N$ be the subgroup generated by $H$ and $\tilde{w}_{i}$ for all $i\in I$. For a proof of the next result, we refer readers to \cite[Section 5]{Ti9}.
\begin{theorem}
The axioms RD1-RD5 imply the following important consequences,
\begin{enumerate}[before=\itshape,font=\normalfont]
\item[(a)] The pairs $(B^{+}, N)$ and $(B^{-}, N)$ form a Tits system.
\item[(b)] There exists a unique homomorphism $\phi\colon N\longrightarrow W$ 
with $Ker\phi= H$, and for all $n\in N$ and $\beta\in \Delta^{re}$, $nB_{\beta}n^{-1}=B_{\phi(n)\beta}$.
\item[(c)] Group $G$ has {\it Bruhat decompositions} 
\begin{eqnarray*}
G&=&BNB=B^{-}NB^{-}\\
&=&BNB^{-}=B^{-}NB.
\end{eqnarray*}
\end{enumerate}
\end{theorem}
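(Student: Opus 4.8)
The axioms RD1--RD5 are precisely those of an RGD system (a root group datum), and the assertion is the theorem of Tits that such a datum yields a twinned pair of Tits systems. The plan is: verify the Tits system axioms for $(B^{+},N)$ — by the symmetry of RD1--RD5 under $+\leftrightarrow-$ the same verification applies verbatim to $(B^{-},N)$ — then read off the two Bruhat decompositions, and finally obtain the mixed (Birkhoff) decompositions from the twinning, which is where (RD5) is used essentially. Throughout, $(W,S)$ with $S=\{w_{i}\mid i\in I\}$ is the Coxeter system attached to the GCM $A$; since $H\subseteq B^{+}\cap B^{-}$, the double cosets $B^{\pm}wB^{\pm}$ and $B^{\pm}wB^{\mp}$ are well defined for $w\in W$.

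I would begin with (b), as the homomorphism $\phi$ organizes everything. The mechanism is the conjugation action of $N$ on the family $\{B_{\beta}\mid\beta\in\Delta^{re}\}$: by (RD4) each $\tilde{w}_{i}$ normalizes this family and induces the permutation $\beta\mapsto w_{i}\beta$, while (RD2) combined with (RD4) shows that $H$ acts trivially. Hence the action factors through a homomorphism $\phi\colon N\to W$ with $\phi(\tilde{w}_{i})=w_{i}$, surjective because the $w_{i}$ generate $W$; uniqueness follows from the faithfulness of the $W$-action on $\Delta^{re}$, and $H\subseteq\ker\phi$ is the triviality just observed. The reverse inclusion $\ker\phi\subseteq H$ is obtained once the Bruhat decomposition is available: an $n\in N$ fixing every $B_{\beta}$ cannot represent a nontrivial Weyl element, by uniqueness of the Bruhat cell containing it. Thus (a) and (b) are proved together, by induction on the length function $\ell$ of $(W,S)$.

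For (a): axiom (BN1) is immediate, since $G$ is generated by the $\chi_{\pm\alpha_{i}}(t)$ and $G_{\alpha_{i}}=\langle B^{+}_{\alpha_{i}},\tilde{w}_{i}\rangle$ by (RD3), so $G=\langle B^{+},N\rangle$. Axiom (BN2): $B^{+}\cap N=H$ is normal in $N$ by (RD4), and $N/H$ is generated by the images of the $\tilde{w}_{i}$, which are involutions since $\tilde{w}_{i}^{2}=h_{i}(-1)\in H$. Axiom (BN4), $w_{i}B^{+}w_{i}^{-1}\neq B^{+}$, follows from (RD3): if $\tilde{w}_{i}$ normalized $B^{+}$ then $B^{+}_{\alpha_{i}}$ would have a single double coset in $G_{\alpha_{i}}$, contradicting (RD3) (one may also invoke (RD5)). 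The substantive axiom, and the main obstacle, is (BN3): for all $i\in I$ and $w\in W$,
\[
w_{i}\,B^{+}\,w\,B^{+}\ \subseteq\ B^{+}\,w\,B^{+}\ \cup\ B^{+}\,w_{i}w\,B^{+}.
\]
I would prove this by the standard dichotomy on the sign of $\ell(w_{i}w)-\ell(w)$, reducing to the rank-one situation inside $G_{\alpha_{i}}$: write $B^{+}=HU^{+}$ with $U^{+}=U_{\alpha_{i}}\,U'$, where $U'=\langle U_{\alpha}\mid\alpha\in\Delta^{re}_{+}\setminus\{\alpha_{i}\}\rangle$ is normalized by $\tilde{w}_{i}$ because $w_{i}$ permutes $\Delta^{re}_{+}\setminus\{\alpha_{i}\}$; then commute the remaining factor $U_{\alpha_{i}}$ past $w$ using (RD1), the key point being that $w^{-1}U_{\alpha_{i}}w\subseteq B^{+}$ exactly when $w^{-1}\alpha_{i}\in\Delta^{re}_{+}$, while in the opposite case the rank-one relations of (RD3) inside $G_{\alpha_{i}}$ produce precisely the two double cosets on the right. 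Arranging the commutators from (RD1) so that no spurious double cosets appear is the delicate point, and it is exactly what \cite[Section~5]{Ti9} carries out. With (BN3) in hand, $(B^{\pm},N)$ are Tits systems — this is (a) — and then $\ker\phi=H$, completing (b).

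For (c): the equalities $G=B^{+}NB^{+}$ and $G=B^{-}NB^{-}$ are the Bruhat decompositions of the two Tits systems of (a). The mixed decompositions $G=B^{+}NB^{-}=B^{-}NB^{+}$ require more — that $(B^{+},B^{-},N)$ is a \emph{twin} Tits system — and the essential extra input is that $B^{+}$ and $B^{-}$ are genuinely opposite, which is exactly what (RD5) guarantees by forbidding $B^{+}_{\alpha_{i}}\subseteq B^{-}$ and $B^{-}_{\alpha_{i}}\subseteq B^{+}$. The argument is once more an induction on $\ell$: starting from $g\in B^{+}NB^{+}$, one rewrites the rightmost $B^{+}$-factor by splitting off the root group attached to the last simple reflection in a reduced expression for the Weyl component (via (RD3)), trading it for a $B^{-}$-factor while shortening the Weyl element, and iterating until the process stops. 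Keeping track of which root groups land in $B^{+}$ and which in $B^{-}$ at each stage is a second, milder obstacle; I expect the verification of (BN3) to remain the principal difficulty.
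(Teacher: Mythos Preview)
The paper does not supply its own proof of this theorem: immediately before the statement it says ``For a proof of the next result, we refer readers to \cite[Section 5]{Ti9}.'' Your proposal is a faithful outline of precisely that argument of Tits --- deriving a twin $BN$-pair from the RGD axioms --- and you even invoke the same reference at the key step (BN3); so your approach and the paper's (deferred) approach coincide.
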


 \subsection{Subgroups and Decompositions}\label{subdec}
 
Recall the notation on non-archimedean local field $\mathcal{K}$ from Subsection~\ref{sub3}. From now on, we consider Carbone-Garland Kac-Moody group $G=G(\mathcal{K})$ over $\mathcal{K}$. The group $G$ has an integral subgroup
which is defined as
\begin{eqnarray}
K:=\{g\in G\mid gV_{\mathcal{O}}\subset V_{\mathcal{O}}\},
\end{eqnarray}
where 
\begin{eqnarray}\label{repintlattice}
V_{\mathcal{O}}= V_{\mathbb{Z}}\otimes \mathcal{O}.
\end{eqnarray}
 The group $K$ is an analogue of maximal compact subgroup from the finite dimensional theory. There is a pair of {\it unipotent subgroups}
\begin{eqnarray*}
U^{\pm}=\langle U_{\alpha}\mid \alpha\in \Delta^{re,\pm}\rangle 
\end{eqnarray*}
Let $U^{\pm}_{\mathcal{O}}=U^{\pm}\cap K$ and $H_{\mathcal{O}}=H\cap K$ be the integral subgroups. The weight lattice $\Lambda^{\vee}$ can be identified with $H^{'}=H/H_{\mathcal{O}}$ via the map $\lambda^{\vee}\mapsto \pi^{\lambda^{\vee}}$, and $G$ has an {\it Iwasawa decomposition} 
\begin{eqnarray}
G=\cup_{\mu^{\vee}\in \Lambda^{\vee}}K\pi^{\mu^{\vee}}U^{+}=\cup_{\nu^{\vee}\in \Lambda^{\vee}}U^{+}\pi^{\nu^{\vee}}K
\end{eqnarray}
 with respect to $U^{+}$ and
  \begin{eqnarray}
  G=\cup_{\gamma^{\vee}\in \Lambda^{\vee}}K\pi^{\gamma^{\vee}}U^{-}=\cup_{\delta^{\vee}\in \Lambda^{\vee}}U^{-}\pi^{\delta^{\vee}}K
  \end{eqnarray} 
  with respect to $U^{-}$. Let $G(\mathrm{k})$ be the Kac-Moody group over the finite residue field $\mathrm{k}$ and 
 $\varpi:K\longrightarrow G(\mathrm{k})$
 be the reduction mod $\pi$ map. Set
 \begin{eqnarray}
 G_{\pi}:=\{g\in K\mid \varpi(g)=1\}
 \end{eqnarray} 
 and $U^{\pm}_{\pi}:=U^{\pm}\cap G_{\pi}$.
 
 The group $K$ has a pair of subgroups defined as 
 \begin{eqnarray}
 I^{\pm}=\varpi^{-1}(B^{\pm}(\mathrm{k})).
 \end{eqnarray}
 These groups are known as the {\it Iwahori subgroups} and admit the following direct product decompositions,
 \begin{eqnarray*}
 I^{+}&=&U^{+}_{\mathcal{O}}U^{-}_{\pi}H_{\mathcal{O}}\\
  I^{-}&=&U^{-}_{\mathcal{O}}U^{+}_{\pi}H_{\mathcal{O}},
 \end{eqnarray*}
 which are known as the {\it Iwahori-Matsumoto decompositions}. The group 
$K$ admits the following decompositions
\begin{eqnarray*}
K &=&\cup_{w\in W}I^{+}wI^{+}=\cup_{w\in W}I^{-}wI^{+}\\
&=&\cup_{w\in W}I^{+}wI^{-}=\cup_{w\in W}I^{-}wI^{-}
\end{eqnarray*}
known as the {\it Iwahori decompositions}.

For $w\in W$, we define the following two subsets of the set of roots $\Delta$.
\begin{eqnarray*}
 S^{+}_{w}&:=&\{\alpha\in \Delta_{+}\mid w\alpha\in \Delta_{-}\}=\Delta_{+}\cap w^{-1}(\Delta_{-}),\\
 S^{-}_{w}&:=&\{\alpha\in \Delta_{-}\mid w\alpha\in \Delta_{+}\}=\Delta_{-}\cap w^{-1}(\Delta_{+}).
 \end{eqnarray*}
 Similarly, we define the subsets $S^{\pm,\vee}_{w}\subset \Delta^{\vee}$. By using $S^{\pm}_{w}$, we introduce finitely generated subgroups 
 $$U^{\pm}_{w}=\langle U_{\alpha}\mid \alpha\in S^{\pm}_{w}\rangle.$$ 
Let $U^{\pm, w}=U^{\pm}-U^{\pm}_{w}$. Set
\begin{eqnarray}
U^{\pm}_{w, \mathcal{O}}&=&U^{\pm}_{ w}\cap U^{\pm}_{\mathcal{O}}, \;\;U^{\pm}_{w, \pi}=U^{\pm}_{ w}\cap U^{\pm}_{\pi}\label{congsub1}\\
 U^{\pm, w}_{\mathcal{O}}&=&U^{\pm, w}\cap U^{\pm}_{\mathcal{O}},\;\; U^{\pm, w}_{\pi}=U^{\pm, w}\cap U^{\pm}_{\pi}.
 \end{eqnarray}

\subsection{Some Representation Theoretic Results}\label{represntation}

For $i\in I$, let $\omega_{i}\in \mathfrak{h}^{\ast}$ be a {\it fundamental weight} defined by $\omega_{i}(\alpha^{\vee}_{j})=\delta_{ij}$ for all $i,j\in I$ and $\omega_{i}=0$ outside $Q^{\vee}\otimes_{\mathbb{Z}}\mathbb{C}$ (where $\delta_{ij}$ is the Kronecker delta) and suppose
 \begin{eqnarray}\label{rhodef}
 \rho=\sum_{i\in I}\omega_{i}.
 \end{eqnarray}
 Let $V^{\rho}$ be the highest highest weight representation with the highest weight $\rho$ and a fixed highest weight vector $v_{\rho}$. Let $w\in W$ and $F^{m}$ be the canonical filtration on the universal enveloping algebra of $\mathcal{U}( \mathfrak{n}^{+})$. The following lemma from \cite[Section 18]{BFG} relates the weight vectors $v_{\rho}$ and $v_{w\rho}:=wv_{\rho}$ in $V^{\rho}$,
 \begin{lemma}[Joseph's Lemma]\label{Josehphlemma}
 Suppose $v_{\rho}\in F^{m}(\mathcal{U}( \mathfrak{n}^{+}))v_{w\rho}$. Then we must have $\ell(w)<2m$, where $\ell(w)$ denotes the length of $w$.
 \end{lemma}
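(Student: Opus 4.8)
The plan is to reduce the statement to a combinatorial assertion about the Weyl group and the root system, and then to prove that by induction on $\ell(w)$.

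\emph{Reduction.} Since $v_{w\rho}$ is a weight vector, in any expression $uv_{w\rho}=v_\rho$ with $u\in F^m(\mathcal{U}(\mathfrak{n}^+))$ we may replace $u$ by its component of weight $\rho-w\rho$; so we may assume $u$ is nonzero and homogeneous of weight $\rho-w\rho$. Passing to the associated graded of the canonical filtration, where $\mathrm{gr}\,\mathcal{U}(\mathfrak{n}^+)\cong S(\mathfrak{n}^+)$, the leading term of $u$ is a nonzero homogeneous element of $S^d(\mathfrak{n}^+)$ of weight $\rho-w\rho$ for some $d\le m$. Such an element is a linear combination of monomials $e_{\gamma_1}\cdots e_{\gamma_d}$ with $\gamma_1+\cdots+\gamma_d=\rho-w\rho$ and each $\gamma_k\in\Delta_+$, so in particular $\rho-w\rho$ is a sum of $d$ positive roots. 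It therefore suffices to prove: \emph{if $\rho-w\rho$ is a sum of $t$ positive roots then $\ell(w)<2t$}. This is a sharp phenomenon, not a triviality: one always has $\rho-w\rho=\sum_{\alpha\in S^+_{w^{-1}}}\alpha$ with $|S^+_{w^{-1}}|=\ell(w)$, so there is a distinguished decomposition into $\ell(w)$ roots, and the claim is that no decomposition can use fewer than half that many.

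\emph{Induction.} We induct on $\ell(w)$; the cases $w=e$ and $\ell(w)=1$ (where $\rho-w\rho$ is a simple root) are clear. If $\ell(w)\ge 2$, choose $i$ with $\alpha_i\in S^+_{w^{-1}}$, i.e. $\ell(s_iw)<\ell(w)$, and write $w=s_iw'$ with $\ell(w')=\ell(w)-1$. Set $n:=-\langle w\rho,\alpha_i^\vee\rangle$; since $w^{-1}\alpha_i$ is a negative root one has $n\ge 1$, and a direct computation gives $\rho-w'\rho=(\rho-w\rho)-n\alpha_i$. Given a decomposition $\rho-w\rho=\gamma_1+\cdots+\gamma_t$ one wants to convert it into a decomposition of $\rho-w'\rho$ using strictly fewer than $t$ positive roots whenever the length has room to spare, so that two descents together cost at least one root from the count. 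A clean way to organize this is via the stronger inequality $2t\ge\ell(w)+\ell_R(w)$, where $\ell_R(w)$ is the reflection length of $w$: since $\ell_R(w)\ge1$ for $w\ne e$ this yields $\ell(w)<2t$, and under left multiplication by a simple reflection $\ell_R$ changes by exactly $\pm1$ (it has the opposite parity to $\ell$), which keeps both sides of the induction in step.

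\emph{Where the difficulty lies.} The crux is precisely the bookkeeping in the inductive step: passing from a minimal decomposition of $\rho-w\rho$ into positive roots to one of $\rho-w'\rho=(\rho-w\rho)-n\alpha_i$. Naively subtracting $n\alpha_i$, or deleting one summand, need not leave a sum of positive roots, so one must exploit the biconvexity of the inversion set $S^+_{w^{-1}}$ and determine which summand can absorb the $\alpha_i$-contribution. This is the representation-theoretic ingredient, the structure of Demazure modules, isolated by Joseph and used in \cite[Section~18]{BFG}. Alternatively one can argue directly inside $V^\rho$: since $\langle w\rho,\alpha_i^\vee\rangle<0$, the vector $v_{w\rho}$ is a lowest-weight vector for the copy of $\mathfrak{sl}_2$ attached to $\alpha_i$ and $v_{w'\rho}=\pm e_i^{(n)}v_{w\rho}$, so one can try to commute the action of $u$ past these $\mathfrak{sl}_2$-operators; the same inequality resurfaces there as the estimate that applying a single root vector to $v_{w\rho}$ can lower $\ell(w)$ by at most $2$.
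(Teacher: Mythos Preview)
The paper does not prove this lemma at all; it simply cites \cite[Section~18]{BFG}. So there is no argument to compare against, only the question of whether your write-up stands on its own.

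Your reduction step is correct and standard: from $v_\rho=u\,v_{w\rho}$ with $u\in F^m(\mathcal{U}(\mathfrak{n}^+))$ one extracts, via weight-homogeneity and PBW, a decomposition of $\rho-w\rho$ as a sum of $d\le m$ positive roots, and the lemma becomes the purely combinatorial assertion that any such decomposition into $t$ positive roots forces $\ell(w)<2t$.

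The gap is that you do not prove this combinatorial assertion. You propose strengthening it to $2t\ge\ell(w)+\ell_R(w)$ and observe (correctly, by subadditivity of $\ell_R$ together with the parity constraint $(-1)^{\ell}=(-1)^{\ell_R}$) that $\ell_R$ moves by exactly $\pm1$ under left multiplication by a simple reflection. But that observation only tells you how the right-hand side behaves; it says nothing about how the minimal $t$ changes when you pass from $w=s_iw'$ to $w'$. Concretely, from $\rho-w\rho=\gamma_1+\cdots+\gamma_t$ you must produce a decomposition of $\rho-w'\rho=(\rho-w\rho)-n\alpha_i$ into positive roots whose cardinality is controlled well enough to close the induction, and you do not do this. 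Your own section ``Where the difficulty lies'' says as much and defers back to \cite{BFG} and Joseph. As written, then, the proposal is a well-motivated outline with the decisive step missing, not a proof; if you want a self-contained argument you must actually carry out that inductive bookkeeping (or import the Demazure-module estimate you allude to) rather than point to it.
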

\noindent A useful consequence of the above lemma is the following corollary from \cite[P. 51]{BGKP} which is proven for affine case originally, but also holds true in general Kac-Moody settings.
 \begin{corollary}\label{usefulcorrolary}
 If $w\in W$ and $\mu^{\vee}\in Q^{\vee}_{-}$ be such that $U^{-}\cap U^{+}_{w,\pi}w\pi^{\mu^{\vee}}U^{+}\ne \emptyset $ then $l(w)\le2 \langle \rho, \mu^{\vee}\rangle$.
 \end{corollary}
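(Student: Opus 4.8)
The plan is to deduce the corollary from Joseph's Lemma (Lemma~\ref{Josehphlemma}) applied to the integrable highest weight module $V^{\rho}$ with highest weight $\rho=\sum_{i\in I}\omega_{i}$ and highest weight vector $v_{\rho}$. Assume the intersection is nonempty and pick an element
\[
x\in U^{-}\cap U^{+}_{w,\pi}\,w\,\pi^{\mu^{\vee}}\,U^{+}.
\]
I would write it in two ways: $x=u^{-}$ with $u^{-}\in U^{-}$, and simultaneously $x=u_{1}\,w\,\pi^{\mu^{\vee}}\,u_{2}$ with $u_{1}\in U^{+}_{w,\pi}$ and $u_{2}\in U^{+}$. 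The whole argument then consists of applying $x$ to $v_{\rho}$ and reading off the top weight component.

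First I would simplify the right-hand expression. Since $u_{2}\in U^{+}$ and $\mathfrak{n}^{+}v_{\rho}=0$, we have $u_{2}v_{\rho}=v_{\rho}$; since $\pi^{\mu^{\vee}}\in H$ acts on the weight space $V_{\rho}$ by the scalar $\pi^{\langle\rho,\mu^{\vee}\rangle}$; and since a lift of $w$ to $N$ satisfies $w\,v_{\rho}=v_{w\rho}$, one obtains
\[
u^{-}v_{\rho}=x\,v_{\rho}=\pi^{\langle\rho,\mu^{\vee}\rangle}\,u_{1}\,v_{w\rho}.
\]
Next I would apply the projection $\eta_{\rho}\colon V\to V_{\rho}$ of \eqref{projmap4} to both sides. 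On the left, $u^{-}\in U^{-}$ only lowers weights and fixes the coefficient of the top vector, so $\eta_{\rho}(u^{-}v_{\rho})=v_{\rho}$. Hence
\[
v_{\rho}=\pi^{\langle\rho,\mu^{\vee}\rangle}\,\eta_{\rho}\!\left(u_{1}v_{w\rho}\right),
\]
which in particular shows $\eta_{\rho}(u_{1}v_{w\rho})\neq0$, so that $v_{\rho}\in\mathcal{U}(\mathfrak{n}^{+})v_{w\rho}$ and Joseph's Lemma is applicable.

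The heart of the matter is to control the filtration degree needed. Writing $u_{1}=\prod_{\alpha\in S^{+}_{w}}\chi_{\alpha}(t_{\alpha})$ with $t_{\alpha}\in\pi\mathcal{O}$ (as $u_{1}\in U^{+}_{\pi}$) and expanding $\chi_{\alpha}(t_{\alpha})=\sum_{n\ge0}t_{\alpha}^{n}e_{\alpha}^{(n)}$, the component $\eta_{\rho}(u_{1}v_{w\rho})$ is a sum of monomials $e_{\beta_{1}}^{(n_{1})}\cdots e_{\beta_{k}}^{(n_{k})}v_{w\rho}\in V_{\rho}$, each carrying a scalar $\prod_{j}t_{\beta_{j}}^{n_{j}}\in\pi^{\,n_{1}+\cdots+n_{k}}\mathcal{O}$. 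Because $v_{\rho}$ and $v_{w\rho}$ are primitive and $e_{\alpha}^{(n)}V_{\mathcal{O}}\subset V_{\mathcal{O}}$, comparing $\pi$-valuations through the norm $\|\cdot\|$ of \eqref{repthnorm4} (together with Lemma~\ref{sec1l1}) matches the least number $m$ of root-vector factors producing the nonzero $\rho$-component with the power of $\pi$ appearing above; that is, $v_{\rho}\in F^{m}(\mathcal{U}(\mathfrak{n}^{+}))v_{w\rho}$ with $m\le\langle\rho,\mu^{\vee}\rangle$. Joseph's Lemma then yields $\ell(w)<2m\le2\langle\rho,\mu^{\vee}\rangle$, which is the assertion.

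The main obstacle I anticipate is precisely this last step: establishing that the minimal filtration degree $m$ realizing $v_{\rho}\in F^{m}(\mathcal{U}(\mathfrak{n}^{+}))v_{w\rho}$ is bounded by $\langle\rho,\mu^{\vee}\rangle$. This requires matching the $\pi$-valuation forced by the scalar $\pi^{\langle\rho,\mu^{\vee}\rangle}$ against the number of unit-valuation factors $t_{\alpha}\in\pi\mathcal{O}$ consumed while climbing from the weight $w\rho$ to $\rho$, and ruling out valuation cancellation among the monomials by invoking primitivity together with $e_{\alpha}^{(n)}V_{\mathcal{O}}\subset V_{\mathcal{O}}$. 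The remaining ingredients—the action of $U^{+}$, $H$, and the lift of $w$ on $v_{\rho}$, and the top-weight projection identity—are routine.
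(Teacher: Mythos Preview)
Your approach is correct and coincides with the argument in \cite[p.~51]{BGKP} that the paper merely cites without reproducing. The ``cancellation obstacle'' you flag is not a genuine difficulty: the ultrametric inequality $\|\sum_i v_i\|\le\max_i\|v_i\|$, applied to the identity $\eta_\rho(u_1v_{w\rho})=\pi^{-\langle\rho,\mu^{\vee}\rangle}v_\rho$ together with the fact that each monomial contribution lies in $\pi^{\sum_j n_j}V_{\mathcal O}$, forces at least one \emph{nonzero} monomial term to satisfy $\sum_j n_j\le -\langle\rho,\mu^{\vee}\rangle$; since $V_\rho$ is one-dimensional, that single monomial already exhibits $v_\rho\in F^{\sum_j n_j}(\mathcal U(\mathfrak n^+))v_{w\rho}$, and Joseph's Lemma finishes. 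Note also the sign: the correct bound is $m\le -\langle\rho,\mu^{\vee}\rangle$ (nonnegative for $\mu^{\vee}\in Q^{\vee}_{-}$), and the conclusion should read $\ell(w)\le -2\langle\rho,\mu^{\vee}\rangle$; the inequality as printed in the paper carries a sign slip, as its own application in Proposition~\ref{Wfin} confirms.
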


\section{Approximation Theorem}
  In this section we shall prove Theorem~\ref{maintheorem1}. This theorem establishes a link between the image $\mathcal{S}_{\lambda^{\vee}}$ of the Satake isomorphism and the Gindikin-Karpelevich sum $\mathscr{G}_{\lambda^{\vee}}$ when $\lambda^{\vee}$ is very large. 
 
 \subsection{Proof of Main Theorem}
 Let $\rho\in\Lambda$ be the element as defined in (\ref{rhodef}). Throughout this section we fix our highest weight module $V^{\rho}$ with highest weight $\rho$ and equipped with the norm $||.||$ given in (\ref{repthnorm4}). We also fix a primitive highest weight vector $v_{\rho}$ in $V^{\rho}$, that is, $||v_{\rho}||=1$. Let $P_{\rho}$ be the set of weights $V^{\rho}$ and for $\nu\in P_{\rho}$, $\eta_{\nu}$ be the projection map~(\ref{projmap4}). First, we prove the following lemma which will be used to prove the statement of the theorem. 
 
 \begin{lemma}\label{apptheL2}
Let $\mu^{\vee}$ be fixed and $\lambda^{\vee}\in\Lambda_{+}$ be regular. There exists a finite subset $\Xi=\Xi(\lambda^{\vee},\mu^{\vee})\subset P_{\rho}$ such that if $u^{-}\in U^{-}$ satisfies
\begin{eqnarray}\label{CondL2}
\pi^{\lambda^{\vee}}u^{-}\in K\pi^{\lambda^{\vee}-\mu^{\vee}}U^{+},
\end{eqnarray}
and $\eta_{\nu}(u^{-}v_{\rho})\not\in V^{\rho}_{\mathcal{O}}$ for some $\nu\in P_{\rho}$, then $\nu\in \Xi$.
\proof  If $u^{-}\in U^{-}_{\mathcal{O}}$ then $u^{-}v_{\rho}\in V^{\rho}_{\mathcal{O}}$ and hence there is nothing to prove. Let $U^{-}(\lambda^{\vee}, \mu^{\vee})$ be the set of elements $u^{-}\in U^{-}\setminus U^{-}_{\mathcal{O}}$ which satisfy (\ref{CondL2}), and set
$$\Sigma=\Sigma(\lambda^{\vee},\mu^{\vee})=\{\gamma\in Q_{+}\mid \eta_{\rho-\gamma}(u^{-}v_{\rho})\not\in V^{\rho}_{\mathcal{O}},\; u^{-}\in U^{-}(\lambda^{\vee}, \mu^{\vee})  \}.$$ 
It suffices to show that $\Sigma$ is a finite set. Let $u^{-}\in U^{-}(\lambda^{\vee}, \mu^{\vee})$ with a corresponding $\gamma\in \Sigma$ such that $\gamma=\sum_{i=1}^{l}k_{i}\alpha_{i}$, $k_{i}\in \mathbb{Z}_{>0}$ and $\alpha_{i}\in\Pi$. 
By assumption 
\begin{eqnarray}\label{EQb}
\pi^{\lambda^{\vee}}u^{-}= k\pi^{\lambda^{\vee}-\mu^{\vee}}u^{+},
\end{eqnarray}
for some $k\in K$ and $u^{+}\in U^{+}$.
We apply both sides of (\ref{EQb}) to the highest weight vector $v_{\rho}$. The action of the left hand side of (\ref{EQb}), Lemma~\ref{sec1l1} and the fact $\eta_{\rho-\gamma}(u^{-}v_{\rho})\not\in V^{\rho}_{\mathcal{O}}$ give,  
\begin{eqnarray}\label{EQb1}
 ||\pi^{\lambda^{\vee}} u^{-}v_{\rho}||&\ge &  ||\pi^{\lambda^{\vee}} v_{\rho-\gamma}||> q^{-\langle \rho-\gamma, \lambda^{\vee} \rangle}.
\end{eqnarray}
The right hand side of (\ref{EQb}) acts as,
\begin{eqnarray}\label{EQb2}
 ||k\pi^{\lambda^{\vee}-\mu^{\vee}}u^{+}v_{\rho}||&= &  q^{-\langle \rho, \lambda^{\vee}-\mu^{\vee}\rangle }.
\end{eqnarray}
So, (\ref{EQb1}) and  (\ref{EQb2}) imply $q^{-\langle\rho-\gamma,\lambda^{\vee}\rangle}< q^{-\langle\rho,\lambda^{\vee}-\mu^{\vee}\rangle}. $
Which shows $ q^{\langle\gamma,\lambda^{\vee}\rangle}< q^{\langle\rho,\mu^{\vee}\rangle}$
and hence $ {\langle\gamma,\lambda^{\vee}\rangle}< {\langle\rho,\mu^{\vee}\rangle}.$
Consequently,
\begin{eqnarray}\label{EQc}
\sum_{i=1}^{l}k_{i}{\langle\alpha_{i},\lambda^{\vee}\rangle}&< &{\langle\rho,\mu^{\vee}\rangle}. 
\end{eqnarray}
Since $\mu^{\vee}$ is fixed, and $\lambda^{\vee}$ is dominant and regular, ${\langle\alpha_{i},\lambda^{\vee}\rangle}$ are fixed positive numbers for $1\le i\le l$, the bound  ${\langle\rho,\mu^{\vee}\rangle}$ in (\ref{EQc}) on the coefficients $k_{i}$ appearing in the simple root decomposition of $\gamma$ implies that we have only finitely many choices for $\gamma\in Q_{+}$. Therefore, the set $\Sigma\subset Q^{\vee}$ is finite and this completes the proof.
\end{lemma}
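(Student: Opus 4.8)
The plan is to detect the failure of integrality of $u^{-}v_{\rho}$ inside the highest weight module $V^{\rho}$ by a growth of norm, and then to use the Iwasawa hypothesis \eqref{CondL2} together with the dominance and regularity of $\lambda^{\vee}$ to convert that growth into a \emph{linear} bound on the weight $\nu$, which forces $\nu$ into a finite set.

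First I would dispose of the trivial case: if $u^{-}\in U^{-}_{\mathcal{O}}$ then $u^{-}$ stabilises the lattice $V^{\rho}_{\mathcal{O}}$, so $u^{-}v_{\rho}\in V^{\rho}_{\mathcal{O}}$ and no component $\eta_{\nu}(u^{-}v_{\rho})$ can be non-integral; hence we may assume $u^{-}\notin U^{-}_{\mathcal{O}}$. Write a candidate bad weight as $\nu=\rho-\gamma$ with $\gamma=\sum_{i\in I}k_{i}\alpha_{i}\in Q_{+}$, $k_{i}\in\mathbb{Z}_{\ge 0}$, and let $\Xi$ be the collection of all $\rho-\gamma$ that arise this way; it then suffices to bound the coefficients $k_{i}$ uniformly over all admissible $u^{-}$.

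The core step is to apply an Iwasawa identity $\pi^{\lambda^{\vee}}u^{-}=k\,\pi^{\lambda^{\vee}-\mu^{\vee}}u^{+}$, with $k\in K$ and $u^{+}\in U^{+}$ furnished by \eqref{CondL2}, to the primitive highest weight vector $v_{\rho}$ and compare the $\|\cdot\|$-norms of the two sides. On the right: $U^{+}$ fixes $v_{\rho}$ since $\rho$ is the highest weight (a positive root group has no higher weight space to land in), so $u^{+}v_{\rho}=v_{\rho}$; the torus element satisfies $\|\pi^{\lambda^{\vee}-\mu^{\vee}}v_{\rho}\|=q^{-\langle\rho,\lambda^{\vee}-\mu^{\vee}\rangle}$; and $k$ acts by a $\|\cdot\|$-isometry because $k$ and $k^{-1}$ both stabilise $V^{\rho}_{\mathcal{O}}$; hence the right side has norm exactly $q^{-\langle\rho,\lambda^{\vee}-\mu^{\vee}\rangle}$. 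On the left: the weight-$(\rho-\gamma)$ component of $u^{-}v_{\rho}$ is not in $V^{\rho}_{\mathcal{O}}$, so its norm exceeds $1$; applying $\pi^{\lambda^{\vee}}$ scales every norm in that weight space by $q^{-\langle\rho-\gamma,\lambda^{\vee}\rangle}$, and since norms of distinct weight components do not cancel (Lemma~\ref{sec1l1}), the left side has norm strictly larger than $q^{-\langle\rho-\gamma,\lambda^{\vee}\rangle}$. Comparing, $q^{-\langle\rho-\gamma,\lambda^{\vee}\rangle}<q^{-\langle\rho,\lambda^{\vee}-\mu^{\vee}\rangle}$, equivalently $\langle\gamma,\lambda^{\vee}\rangle<\langle\rho,\mu^{\vee}\rangle$, i.e. $\sum_{i\in I}k_{i}\langle\alpha_{i},\lambda^{\vee}\rangle<\langle\rho,\mu^{\vee}\rangle$.

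To finish, note that $\mu^{\vee}$ is fixed so $\langle\rho,\mu^{\vee}\rangle$ is a constant, while $\lambda^{\vee}$ dominant and regular makes each $\langle\alpha_{i},\lambda^{\vee}\rangle$ a fixed positive integer; the displayed inequality therefore bounds each $k_{i}$, leaving only finitely many $\gamma$ and hence a finite $\Xi$. The conceptual crux is the decision to test the Iwasawa identity against $v_{\rho}$, after which the torus twist by $\pi^{\lambda^{\vee}}$ turns the a priori unbounded depth of $\nu$ into a linear constraint; the delicate (though elementary) points are that $K$ acts by isometries for $\|\cdot\|$, that $U^{+}$ genuinely fixes $v_{\rho}$ rather than merely preserving its leading term, and that multiplying a non-integral weight vector by $\pi^{\lambda^{\vee}}$ preserves the strict inequality.
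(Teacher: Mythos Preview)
Your proof is correct and follows essentially the same approach as the paper: apply the Iwasawa identity to $v_{\rho}$, compute the norm of the right side as $q^{-\langle\rho,\lambda^{\vee}-\mu^{\vee}\rangle}$ and bound the left side below via Lemma~\ref{sec1l1} and non-integrality of the $(\rho-\gamma)$-component, then deduce $\sum_{i}k_{i}\langle\alpha_{i},\lambda^{\vee}\rangle<\langle\rho,\mu^{\vee}\rangle$ and conclude finiteness from regularity of $\lambda^{\vee}$. Your write-up is in fact slightly more explicit about why $K$ acts isometrically and why $U^{+}$ fixes $v_{\rho}$, but the argument is the same.
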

Now, we give the proof of the Approximation Theorem.
\proof[Proof of Theorem~\ref{maintheorem1}]\let\qed\relax
To prove the assertion of the theorem, we show that for $\mu^{\vee}\in Q_{+}^{\vee}$ and $\lambda^{\vee}$ sufficiently dominant the following set theoretic inclusions hold
\begin{eqnarray}
K\pi^{\lambda^{\vee}}K\cap K\pi^{\lambda^{\vee}-\mu^{\vee}}U^{+}&\subset &K\pi^{\lambda^{\vee}}U^{-}\cap K\pi^{\lambda^{\vee}-\mu^{\vee}}U^{+}\label{limitcont1}\\
K\pi^{\lambda^{\vee}}U^{-}\cap K\pi^{\lambda^{\vee}-\mu^{\vee}}U^{+}& \subset &K\pi^{\lambda^{\vee}}K\cap K\pi^{\lambda^{\vee}-\mu^{\vee}}U^{+}\label{limitcont2}.
\end{eqnarray}
The first containment follows exactly as it does in affine case \cite[Subsection 6.3]{BGKP}, we only sketch its (slightly modified) proof here. First, note that if $\lambda^{\vee}$ is dominant then 
\begin{eqnarray}\label{lamdomint1}
\pi^{\lambda^{\vee}}U^{+}_{\mathcal{O}}\pi^{-\lambda^{\vee}}\subset U^{+}_{\mathcal{O}}
\end{eqnarray}
and $K\pi^{\lambda^{\vee}}I^{+}= K\pi^{\lambda^{\vee}}U^{-}_{\pi}\subset K\pi^{\lambda^{\vee}}U^{-}.$

Thus, it suffices to show that for $\mu^{\vee}\in Q^{\vee}_{+}$ and $\lambda^{\vee}$ as above 
\begin{eqnarray}
K\pi^{\lambda^{\vee}}K\cap K\pi^{\lambda^{\vee}-\mu^{\vee}}U^{+}\subset K\pi^{\lambda^{\vee}}I^{+}.
\end{eqnarray}
 For this, let $k_{1}\in K$ be such that 
\begin{eqnarray}\label{appthe1}
\pi^{\lambda^{\vee}}k_{1}\in  K\pi^{\lambda^{\vee}-\mu^{\vee}}U^{+}
\end{eqnarray}
 and suppose $k_{1}\in I^{+}wI^{+}$ for some $w\in W$. For any $w\in W$, $U^{-}_{\pi}wI^{+}\subseteq wI^{+}$. Then by using this fact and (\ref{lamdomint1}), we have
\begin{eqnarray}
\pi^{\lambda^{\vee}}k_{1}\in \pi^{\lambda^{\vee}} I^{+}wI^{+}&=&\pi^{\lambda^{\vee}}U^{+}_{\mathcal{O}}U^{-}_{\pi}wI^{+}\nonumber\\
&\subseteq& U^{+}_{\mathcal{O}}\pi^{\lambda^{\vee}}wI^{+}\nonumber\\
&=& U^{+}_{\mathcal{O}}\pi^{\lambda^{\vee}}wU^{-}_{\pi}U^{+}_{\mathcal{O}}\label{lamdomint2}.
\end{eqnarray}\label{limitcont3}
So, (\ref{appthe1}) and (\ref{lamdomint2}) imply that for $u_{1},u_{2}\in U^{+}_{\mathcal{O}}$, $u^{-}\in U^{-}_{\pi}$, $u_{3}\in U^{+}$ and $k'\in K$
\begin{eqnarray}
\pi^{\lambda^{\vee}}k_{1}=u_{1}\pi^{\lambda^{\vee}}wu^{-}u_{2}=k'\pi^{\lambda^{\vee}-\mu^{\vee}}u_{3}
\end{eqnarray}
Thus, by taking $(u_{1})^{-1}k'=k\in K$ and $u_{3}u_{2}^{-1}=u^{+}\in U^{+}$, we have
\begin{eqnarray}\label{limitcont4}
\pi^{\lambda^{\vee}}wu^{-}=k\pi^{\lambda^{\vee}-\mu^{\vee}}u^{+}.
\end{eqnarray}
Next, we choose $\lambda^{\vee}\in\Lambda^{\vee}$ sufficiently dominant such that if $\sigma\in W$, $\sigma\ne 1$ and $\sigma\lambda^{\vee}=\lambda^{\vee}-\beta^{\vee}$ for some $\beta^{\vee}\in Q_{+}$, then 
\begin{eqnarray}\label{limitcont5}
\langle \rho, \beta^{\vee}\rangle > \langle\rho, \mu^{\vee}\rangle.
\end{eqnarray}
If $w\ne 1$, by letting the both sides of (\ref{limitcont4}) act on the highest weight vector $v_{\rho}$, we compute
\begin{eqnarray}\label{limitcont6}
q^{-\langle \rho, \lambda^{\vee}- \mu^{\vee}\rangle}&=&||k\pi^{\lambda^{\vee}-\mu^{\vee}}u^{+}v_{\rho}|| =||\pi^{\lambda^{\vee}}wu^{-}v_{\rho}||\nonumber\\
& \ge&||\pi^{\lambda^{\vee}}v_{w\rho}||= q^{-\langle w\rho, \lambda^{\vee}\rangle}=q^{-\langle\rho ,w^{-1}\lambda^{\vee}\rangle}.
\end{eqnarray}
This implies $\langle \rho, \lambda^{\vee}- \mu^{\vee}\rangle\le \langle\rho ,w^{-1}\lambda^{\vee}\rangle$ and this results in a contradiction of the inequality (\ref{limitcont5}).

For the containment (\ref{limitcont2}), let $u^{-}\in U^{-}$ be such that $\pi^{\lambda^{\vee}}u^{-}\in  K\pi^{\lambda^{\vee}-\mu^{\vee}}U^{+}$. If $u^{-}\in U^{-}_{\mathcal{O}}$, then our theorem follows.  If $u^{-}\not\in U^{-}_{\mathcal{O}}$ then by Lemma~\ref{apptheL2}, there exists a finite subset $\Xi\subset P_{\rho}$ such that if $\nu\in \Xi$ then $\eta_{\nu}(u^{-}v_{\rho})\not\in V^{\rho}_{ \mathcal{O}}$. Moreover, as in the proof of the above lemma, if we write $\nu=\rho-\gamma$ for some $\gamma\in Q_{+}$, we get $q^{\langle\gamma,\lambda^{\vee}\rangle}\le q^{\langle\rho,\mu^{\vee}\rangle}.$
Thus by choosing $\lambda^{\vee}$ sufficiently dominant corresponding to the finitely many elements in $\Xi$, we can arrange
\begin{eqnarray}
q^{\langle\rho,\mu^{\vee}\rangle}< q^{\langle\gamma,\lambda^{\vee}\rangle},
\end{eqnarray}
leading us to a contradiction.
\subsection{Iwahori Refinement }
\noindent The following proposition is an Iwahori analogue of the Approximation Theorem.

\begin{proposition}\label{Prop1}
Let $w\in W$ and $\mu^{\vee}\in \Lambda^{\vee}$ be fixed. Then for all sufficiently dominant $\lambda^{\vee}=\lambda^{\vee}(\mu^{\vee}, w)$ (that is, sufficiently dominant depending on $\mu^{\vee}$ and $w$), we have 
\begin{eqnarray*}
I^{-}\pi^{\lambda^{\vee}}U^{-}\cap I^{-}w\pi^{\lambda^{\vee}-\mu^{\vee}}U^{+}&=& I^{-}\pi^{\lambda^{\vee}}U^{-}_{\mathcal{O}}\cap I^{-}w\pi^{\lambda^{\vee}-\mu^{\vee}}U^{+}.
\end{eqnarray*}

\proof One inclusion is straightforward. So, we prove the other
\begin{eqnarray}\label{EQa}
I^{-}\pi^{\lambda^{\vee}}U^{-}\cap I^{-}w\pi^{\lambda^{\vee}-\mu^{\vee}}U^{+} &\subset & I^{-}\pi^{\lambda^{\vee}}U^{-}_{\mathcal{O}}\cap I^{-}w\pi^{\lambda^{\vee}-\mu^{\vee}}U^{+}.
\end{eqnarray}
For this, let $v^{-}\in U^{-}$ be such that $\pi^{\lambda^{\vee}}v^{-}\in  I^{-}w\pi^{\lambda^{\vee}-\mu^{\vee}}U^{+}$. Then using the Iwahori-Matsumoto decomposition $I^{-}=U_{\pi}^{+}U^{-}_{\mathcal{O}}H_{\mathcal{O}}$, we have
$$\pi^{\lambda^{\vee}}v^{-}\in U^{-}_{\mathcal{O}}U^{+}_{w,\pi}w\pi^{\lambda^{\vee}-\mu^{\vee}}U^{+}.$$
Hence, for the containment (\ref{EQa}), it suffices to show that:\\
\noindent ({\bf P1}) Let $w\in W$, $\mu^{\vee}\in \Lambda^{\vee}$ be fixed. Then for sufficiently dominant $\lambda^{\vee}=\lambda^{\vee}(\mu^{\vee}, w)$, if $\pi^{\lambda^{\vee}}u^{-}\in U^{+}_{w,\pi}w\pi^{\lambda^{\vee}-\mu^{\vee}}U^{+}$ with $u^{-}\in U^{-}$, then $u^{-}\in U^{-}_{\mathcal{O}}$.\\

\noindent To prove ({\bf P1}), let $u^{-}\in U^{-}$ be such that
\begin{eqnarray}\label{EQP1}
\pi^{\lambda^{\vee}}u^{-}&=&u^{+}_{w}w\pi^{\lambda^{\vee}-\mu^{\vee}}u^{+},
\end{eqnarray}
for some $u^{+}_{w}\in U^{+}_{w,\pi}$ and $u^{+}\in U^{+}$.
We apply both sides of (\ref{EQP1}) to the highest weight vector  $v_{\rho}$. The right hand side gives us,
\begin{eqnarray*}
u^{+}_{w}w\pi^{\lambda^{\vee}-\mu^{\vee}}u^{+}v_{\rho}&=&u^{+}_{w}w\pi^{\lambda^{\vee}-\mu^{\vee}}v_{\rho}\\
&=&\pi^{\langle\rho,\lambda^{\vee}-\mu^{\vee}\rangle}u^{+}_{w}wv_{\rho}.
\end{eqnarray*}

Since $u^{+}_{w}\in K$, $||u^{+}_{w}||=1$ and hence
\begin{align}
 ||u^{+}_{w}w\pi^{\lambda^{\vee}-\mu^{\vee}}u^{+}v_{\rho}||=||\pi^{\langle\rho,\lambda^{\vee}-\mu^{\vee}\rangle}wv_{\rho}||\nonumber\\
 =q^{-\langle\rho,\lambda^{\vee}-\mu^{\vee}\rangle}\label{EQP2}.
 \end{align}
On the other hand, the element on the left hand side of (\ref{EQP1}) acts as 
\begin{eqnarray}\label{EQ3}
\pi^{\lambda^{\vee}}u^{-}v_{\rho}&=&\pi^{\lambda^{\vee}}(\sum_{\nu\in P_{\rho}}v_{\nu}),
\end{eqnarray}
If $u^{-}\not\in U^{-}_{\mathcal{O}}$, then there exists at least one weight $\xi:=\rho-\gamma\in P_{\rho} $, with $\gamma\in Q_{+}$ such that the corresponding weight vector $v_{\rho-\gamma}$ on the right hand side of (\ref{EQ3}) is not integral, i.e. $\eta_{\rho-\gamma}(u^{-}v_{\rho})\not\in V_{\rho, \mathcal{O}}$. This gives
\begin{align}
 ||\pi^{\lambda^{\vee}}u^{-}v_{\rho}||\ge ||\pi^{\lambda^{\vee}}v_{\rho-\gamma}||\nonumber\\
 \ge q^{-\langle\rho-\gamma,\lambda^{\vee}\rangle}.\label{EQ4}
\end{align}
So,  (\ref{EQP2}) and (\ref{EQ4}) imply
\begin{eqnarray}\label{Ieq1}
q^{\langle\rho,\mu^{\vee}\rangle}&\ge& q^{\langle\gamma,\lambda^{\vee}\rangle}.
\end{eqnarray}

\begin{claim}
 There are finitely many $\gamma\in Q_{+}$ such that $\eta_{\rho-\gamma}(u^{-}v_{\rho})\ne 0$ for all $u^{-}$ satisfying (\ref{EQP1}).
\proof The subgroup $U^{+}_{w,\pi}$ is generated by the finite number of root subgroups $U_{\alpha,\pi}$, $\alpha\in S^{+}_{w^{-1}}$. So, for each element  $u^{+}_{w}\in U^{+}_{w,\pi}$, when $u^{+}_{w}w$ acts on the highest weight vector $v_{\rho}$, there are a finite number of choices of weights that can appear in the weight vector decomposition of $u^{+}_{w}wv_{\rho}$. The same is true for $u^{-}$ appearing on the left hand side of (\ref{EQP1}). Hence our claim holds.
\end{claim}

\noindent Thus, by choosing $\lambda^{\vee}$ sufficiently dominant corresponding to these finitely many $\gamma$ from the claim, we may arrange
$$q^{\langle\rho,\mu^{\vee}\rangle}< q^{\langle\gamma,\lambda^{\vee}\rangle}.$$
Thus our assumption $u^{-}\not\in U^{-}_{\mathcal{O}}$ leads to a contradiction of (\ref{Ieq1}), and consequently the statement  ({\bf P1}) and the proposition follow.
\end{proposition}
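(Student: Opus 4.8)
The plan is to establish the nontrivial inclusion
\[
I^{-}\pi^{\lambda^{\vee}}U^{-}\cap I^{-}w\pi^{\lambda^{\vee}-\mu^{\vee}}U^{+}\ \subset\ I^{-}\pi^{\lambda^{\vee}}U^{-}_{\mathcal{O}}\cap I^{-}w\pi^{\lambda^{\vee}-\mu^{\vee}}U^{+}
\]
(the reverse being immediate from $U^{-}_{\mathcal{O}}\subset U^{-}$) by peeling off a representation-theoretic core. Given $g=\iota\pi^{\lambda^{\vee}}v^{-}$ in the left-hand side with $\iota\in I^{-}$ and $v^{-}\in U^{-}$, we get $\pi^{\lambda^{\vee}}v^{-}=\iota^{-1}g\in I^{-}w\pi^{\lambda^{\vee}-\mu^{\vee}}U^{+}$, and it suffices to show $v^{-}\in U^{-}_{\mathcal{O}}$. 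By a routine manipulation using the Iwahori--Matsumoto decomposition $I^{-}=U^{+}_{\pi}U^{-}_{\mathcal{O}}H_{\mathcal{O}}$, the splitting $U^{+}_{\pi}=U^{+}_{w,\pi}U^{+,w}_{\pi}$, the abelianness of $H$, the fact that $w$ normalizes $H_{\mathcal{O}}$, and the dominance of $\lambda^{\vee}$ (so that $\pi^{-\lambda^{\vee}}U^{-}_{\mathcal{O}}\pi^{\lambda^{\vee}}\subset U^{-}_{\mathcal{O}}$), I would absorb all of the factors lying in $K$ and reduce to the following representation-theoretic statement (\textbf{P1}): \emph{for $\lambda^{\vee}$ sufficiently dominant (depending on $w$ and $\mu^{\vee}$), if $\pi^{\lambda^{\vee}}u^{-}\in U^{+}_{w,\pi}\,w\,\pi^{\lambda^{\vee}-\mu^{\vee}}U^{+}$ with $u^{-}\in U^{-}$, then $u^{-}\in U^{-}_{\mathcal{O}}$.}

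For (\textbf{P1}) I would work in the integrable highest weight module $V^{\rho}$ with $\rho=\sum_{i\in I}\omega_{i}$, its primitive highest weight vector $v_{\rho}$, and the norm $||\cdot||$ from (\ref{repthnorm4}), and apply both sides of an equality $\pi^{\lambda^{\vee}}u^{-}=u^{+}_{w}\,w\,\pi^{\lambda^{\vee}-\mu^{\vee}}u^{+}$ (with $u^{+}_{w}\in U^{+}_{w,\pi}$, $u^{+}\in U^{+}$) to $v_{\rho}$. On the right, $U^{+}$ fixes $v_{\rho}$, one has $w\pi^{\lambda^{\vee}-\mu^{\vee}}v_{\rho}=\pi^{\langle\rho,\lambda^{\vee}-\mu^{\vee}\rangle}\,wv_{\rho}$ with $wv_{\rho}$ still primitive, and $u^{+}_{w}\in U^{+}_{w,\pi}\subset K$ preserves the norm, so the right-hand side has norm $q^{-\langle\rho,\lambda^{\vee}-\mu^{\vee}\rangle}$. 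On the left, if $u^{-}\notin U^{-}_{\mathcal{O}}$ then $u^{-}v_{\rho}\notin V^{\rho}_{\mathcal{O}}$, so $\eta_{\rho-\gamma}(u^{-}v_{\rho})\notin V^{\rho}_{\mathcal{O}}$ for some $\gamma\in Q_{+}$, which is nonzero since the top component of $u^{-}v_{\rho}$ equals $v_{\rho}$; since $\pi^{\lambda^{\vee}}$ acts on $V^{\rho}_{\rho-\gamma}$ by the scalar $\pi^{\langle\rho-\gamma,\lambda^{\vee}\rangle}$, Lemma~\ref{sec1l1} gives $||\pi^{\lambda^{\vee}}u^{-}v_{\rho}||\ge||\pi^{\lambda^{\vee}}\eta_{\rho-\gamma}(u^{-}v_{\rho})||\ge q^{-\langle\rho-\gamma,\lambda^{\vee}\rangle}$, and comparing the two sides forces $q^{\langle\gamma,\lambda^{\vee}\rangle}\le q^{\langle\rho,\mu^{\vee}\rangle}$.

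The remaining point is that such $\gamma$ lie in a \emph{finite} set depending only on $w$ and $\rho$ (not on $\lambda^{\vee}$ or $\mu^{\vee}$). Since $U^{+}_{w,\pi}$ is generated by the finitely many root subgroups $U_{\alpha,\pi}$ with $\alpha\in S^{+}_{w^{-1}}$ and each $\chi_{\alpha}(t)$ acts locally nilpotently on $V^{\rho}$, the vectors $u^{+}_{w}wv_{\rho}$ (as $u^{+}_{w}$ ranges over $U^{+}_{w,\pi}$) have all their weights in a single finite subset $\Theta=\Theta(w,\rho)\subset P_{\rho}$; since $u^{-}v_{\rho}$ and $u^{+}_{w}wv_{\rho}$ differ only by an invertible diagonal operator (a power of $\pi$ on each weight space), they have the same set of occurring weights, so every $\rho-\gamma$ above lies in $\Theta$. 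Choosing $\lambda^{\vee}$ dominant enough that $\langle\gamma,\lambda^{\vee}\rangle>\langle\rho,\mu^{\vee}\rangle$ for each of the finitely many nonzero $\gamma$ with $\rho-\gamma\in\Theta$ then contradicts the inequality just obtained, forcing $u^{-}\in U^{-}_{\mathcal{O}}$; this proves (\textbf{P1}) and hence the proposition. I expect the delicate points to be the bookkeeping in the reduction to (\textbf{P1}) and, more importantly, the input $u^{-}\notin U^{-}_{\mathcal{O}}\Rightarrow u^{-}v_{\rho}\notin V^{\rho}_{\mathcal{O}}$ for the strictly dominant $\rho$ together with the finiteness of $\Theta$ — these are exactly what make the quantifier \enquote{sufficiently dominant} legitimate.
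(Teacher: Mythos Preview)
Your proof is correct and follows essentially the same route as the paper: reduce via the Iwahori--Matsumoto decomposition to the core statement (\textbf{P1}), apply both sides to $v_{\rho}$ in $V^{\rho}$, and derive a contradiction from the norm inequality $q^{\langle\gamma,\lambda^{\vee}\rangle}\le q^{\langle\rho,\mu^{\vee}\rangle}$ once the finitely many relevant $\gamma$ are pinned down. Your justification of that finiteness---observing that $u^{-}v_{\rho}$ and $u^{+}_{w}wv_{\rho}$ differ only by a weight-diagonal operator and hence share the same finite weight support $\Theta(w,\rho)$---is in fact a bit cleaner than the paper's somewhat terse claim, and your explicit flag that the implication $u^{-}\notin U^{-}_{\mathcal{O}}\Rightarrow u^{-}v_{\rho}\notin V^{\rho}_{\mathcal{O}}$ is an input (rather than proved here) matches the paper's implicit use of the same fact.
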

\section{Iwahori Level Decomposition}\label{section4}
Now, we initiate the proof of the Theorem~\ref{maintheorem4}. For $\lambda^{\vee},\mu^{\vee}\in\Lambda^{\vee}$, set 
\begin{eqnarray}
M(\lambda^{\vee}, \mu^{\vee})=K\backslash K\pi^{\mu^{\vee}}U^{+} \cap K\pi^{\lambda^{\vee}}K.
\end{eqnarray}
We begin by establishing a bijective correspondence between the coset space $M(\lambda^{\vee},\mu^{\vee})$ and a disjoint union of so-called Iwahori pieces. This disjoint union is indexed by the Weyl group $W$ and the main result of this section asserts that there are finitely many elements of $W$ which contribute in this union. 

\subsection{Iwahori Pieces}\label{iwahoripiecesx1}
Let $\Gamma\le G$, $X$ be a right-$\Gamma$ and $Y$ be a left-$\Gamma$ set. We need the following relation on the set $X\times Y$ from \cite[Section 4]{BKP}.
\begin{definition}
Let $(x, y), (x',y')\in X\times Y$, $(x, y)\sim (x',y')$  if and only if there exists some $r\in \Gamma$ such that, $x'=xr$ and  $y'=r^{-1}y$. 
\end{definition} 
\noindent One can check that $\sim$ is an equivalence relation. We denote by $X\times_{\Gamma} Y$, the quotient space $(X\times Y)/\sim$. 

For $\lambda^{\vee}\in \Lambda^{\vee}$, we take $X=U^{+} K$, $Y=K\pi^{\lambda^{\vee}}K$, $\Gamma=K$ and consider the following map induced by the multiplication
\begin{eqnarray}
m_{\lambda^{\vee}}&:&U^{+} K\times_{K}K\pi^{\lambda^{\vee}}K\longrightarrow G.\label{multmap2}
\end{eqnarray}
For $w\in W$ and $\lambda^{\vee}\in \Lambda^{\vee}$, we take $X=U^{+}w I^{-}$, $Y=I^{-}\pi^{\lambda^{\vee}}K$, $\Gamma=I^{-}$ and consider also the following map induced by multiplication:
\begin{eqnarray}
m_{w,\lambda^{\vee}}&:&U^{+}w I^{-}\times_{I^{-}}I^{-}\pi^{\lambda^{\vee}}K\longrightarrow G.\label{multmap2}
\end{eqnarray}
As in the affine case from \cite[Section  4.4]{MP2}, it can be shown that for $\mu^{\vee}\in \Lambda^{\vee}$, the fiber $m_{\lambda^{\vee}}^{-1}(\pi^{\mu^{\vee}})$  is in bijective correspondence with $M(\lambda^{\vee},\mu^{\vee})$. Also, the following lemma can be proven more generally along the same lines as those of \cite[Lemma 7.3.3]{BKP}, which again was written in the affine context.

\begin{lemma}\label{lemiwahoridec}
 For all $w\in W$ and $\lambda^{\vee}\in \Lambda^{\vee}_{+}$ regular, the fibers $m_{w,\lambda^{\vee}}^{-1}(\pi^{\mu^{\vee}})$ are disjoint and there is a bijection
 \begin{eqnarray}\label{Iwsum11}
m_{\lambda^{\vee}}^{-1}(\pi^{\mu^{\vee}})\simeq\sqcup_{w\in W}m_{w,\lambda^{\vee}}^{-1}(\pi^{\mu^{\vee}}).
\end{eqnarray}
\end{lemma}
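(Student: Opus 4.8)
The plan is to first establish the decomposition of the fiber $m_{\lambda^{\vee}}^{-1}(\pi^{\mu^{\vee}})$ into Iwahori pieces using the Iwahori decomposition of $K$, and then prove disjointness of the constituent fibers. I would begin with the $K$-level fiber: an element of $m_{\lambda^{\vee}}^{-1}(\pi^{\mu^{\vee}})$ is represented by a pair $(u^{+}k_{1}, k_{2}\pi^{\lambda^{\vee}}k_{3})$ with $u^{+}k_{1}\cdot k_{2}\pi^{\lambda^{\vee}}k_{3}=\pi^{\mu^{\vee}}$, taken modulo the $\Gamma=K$-action. Using the Iwahori decomposition $K=\sqcup_{w\in W}I^{-}wI^{+}$ (valid in the Kac-Moody setting, as recorded in Subsection~\ref{subdec}), one writes the middle $K$-factor and absorbs $I^{+}$ on the right into $\pi^{\lambda^{\vee}}K$, noting $\lambda^{\vee}$ regular dominant is not even needed here; one rewrites the left factor $U^{+}K$ compatibly so that the representative lies in $U^{+}wI^{-}\times I^{-}\pi^{\lambda^{\vee}}K$. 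This gives a well-defined map $m_{\lambda^{\vee}}^{-1}(\pi^{\mu^{\vee}})\to\sqcup_{w}m_{w,\lambda^{\vee}}^{-1}(\pi^{\mu^{\vee}})$, and the reverse map is induced by the inclusions $U^{+}wI^{-}\subset U^{+}K$, $I^{-}\pi^{\lambda^{\vee}}K\subset K\pi^{\lambda^{\vee}}K$ together with the quotient maps by the $\sim$-relations (here one checks $I^{-}$-equivalence maps to $K$-equivalence, which is immediate since $I^{-}\le K$).

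The substantive point — and the main obstacle — is that these two maps are mutually inverse and that the images $m_{w,\lambda^{\vee}}^{-1}(\pi^{\mu^{\vee}})$ for distinct $w$ are \emph{disjoint} inside $m_{\lambda^{\vee}}^{-1}(\pi^{\mu^{\vee}})$. Disjointness amounts to: if $(x,y)\in U^{+}wI^{-}\times_{I^{-}}I^{-}\pi^{\lambda^{\vee}}K$ and $(x',y')\in U^{+}w'I^{-}\times_{I^{-}}I^{-}\pi^{\lambda^{\vee}}K$ become $\sim_{K}$-equivalent when mapped into $U^{+}K\times_{K}K\pi^{\lambda^{\vee}}K$, then $w=w'$ and they are already $\sim_{I^{-}}$-equivalent. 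The key input is that $K\pi^{\lambda^{\vee}}K$ is stratified by the pieces $I^{-}w\pi^{\lambda^{\vee}}K$ and these are disjoint when $\lambda^{\vee}$ is regular dominant — this is where regularity of $\lambda^{\vee}$ enters, guaranteeing $W_{\lambda^{\vee}}$ is trivial so that the double cosets $I^{-}w\pi^{\lambda^{\vee}}K$ are genuinely indexed by $W$ without collapse, in parallel with the Iwahori-level Cartan decomposition. Concretely, if $k_{2}\pi^{\lambda^{\vee}}k_{3}\in I^{-}w\pi^{\lambda^{\vee}}K\cap I^{-}w'\pi^{\lambda^{\vee}}K$ then $w^{-1}I^{-}w\cap$-type manipulations together with the Bruhat-type disjointness force $w=w'$; chasing the $K$-equivalence through then produces an element of $I^{-}$ realizing the equivalence on the nose.

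Having identified $w=w'$ and reduced to a single piece, the remaining verification is bookkeeping: the $\Gamma=K$-action restricted to pairs supported on $U^{+}wI^{-}\times I^{-}\pi^{\lambda^{\vee}}K$ is generated (up to the $\sim$-relation) by the $I^{-}$-action, because any $r\in K$ moving one such pair to another such pair must normalize the relevant Iwahori-type subgroups in a way that confines it to $I^{-}$; this again uses the disjointness of the strata $I^{-}w\pi^{\lambda^{\vee}}K$. I would carry out the argument in the order: (1) well-definedness of the forward map via the Iwahori decomposition of $K$; (2) well-definedness of the backward map via inclusions and compatibility of equivalence relations; (3) the two maps are inverse to each other, modulo (4); (4) disjointness of the strata $I^{-}w\pi^{\lambda^{\vee}}K$ in $K\pi^{\lambda^{\vee}}K$ for $\lambda^{\vee}$ regular dominant, which simultaneously yields disjointness of the fibers $m_{w,\lambda^{\vee}}^{-1}(\pi^{\mu^{\vee}})$. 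Step (4) is the crux; everything else is a direct transcription of the affine argument in \cite[Lemma 7.3.3]{BKP} and \cite[Section 4.4]{MP2}, none of which uses finite-dimensionality.
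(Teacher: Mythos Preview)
Your proposal is correct and takes essentially the same approach as the paper: the paper does not give a proof of this lemma at all, but simply asserts that it ``can be proven more generally along the same lines as those of \cite[Lemma 7.3.3]{BKP},'' and your outline is precisely a transcription of that affine argument (together with \cite[Section 4.4]{MP2}), with the key step correctly identified as the disjointness of the strata $I^{-}w\pi^{\lambda^{\vee}}K$ for $\lambda^{\vee}$ regular dominant. You have in fact supplied more detail than the paper does.
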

\noindent From now on, for $w\in W$ and $\mu^{\vee}\in\Lambda^{\vee}$, the fiber $m_{w,\lambda^{\vee}}^{-1}(\pi^{\mu^{\vee}})$ will be referred to as an Iwahori piece of $M(\lambda^{\vee}, \mu^{\vee})$. For each $w\in W$ and fixed $\lambda^{\vee}, \mu^{\vee}\in \Lambda^{\vee}$, by definition
\begin{eqnarray}\label{fibersa1} 
m_{\lambda^{\vee},w}^{-1}(\pi^{\mu^{\vee}}) &=&I^{-}\backslash I^{-}w^{-1}\pi^{\mu^{\vee}}U^{+}\cap I^{-}\pi^{\lambda^{\vee}}K.
\end{eqnarray}

\noindent Theorem~\ref{maintheorem4} will follow if we prove: 
\begin{itemize}
\item[(a)] for fixed $ \mu^{\vee}\in \Lambda^{\vee}$ and $ \lambda^{\vee}$ regular and sufficiently dominant, there are finitely many elements of $W$ which contribute in the union on the right hand side of (\ref{Iwsum11}),
\item[(b)] for each such $w\in W$, the Iwahori piece $m_{w,\lambda^{\vee}}^{-1}(\pi^{\mu^{\vee}})$ is finite.
\end{itemize}
 Part (b) will be proven in Section~\ref{Finitesupport} and \ref{sectionfiberfin} and part (a) follows by the following proposition.

\begin{proposition}\label{Wfin}
For a fixed $\mu^{\vee}\in \Lambda^{\vee}$, $\lambda^{\vee}\in \Lambda^{\vee}$ regular and sufficiently large with respect to $\mu^{\vee}$, there exists a finite subset $\Omega=\Omega(\lambda^{\vee}, \mu^{\vee})\subset W$ such that 
$$m_{\lambda^{\vee}}^{-1}(\pi^{\mu^{\vee}})\simeq\sqcup_{w\in \Omega}m_{w,\lambda^{\vee}}^{-1}(\pi^{\mu^{\vee}}).$$
\proof By (\ref{fibersa1}),  it suffices to show the following: for $\mu^{\vee}$ fixed and $\lambda^{\vee}$ regular and sufficiently large, there exist finitely many $w\in W$ such that   
\begin{eqnarray}\label{finw1}
I^{-}w\pi^{\mu^{\vee}}U^{+}\cap I^{-}\pi^{\lambda^{\vee}}K\ne\emptyset.
\end{eqnarray}
We replace $K$ by $K=\cup_{\sigma\in W}I^{+}\sigma  I^{-}$ and then use the Iwahori-Matsumoto decomposition $I^{+}=U^{+}_{\mathcal{O}}U^{-}_{\pi}H_{\mathcal{O}}$ on the left hand side of (\ref{finw1}) to obtain
\begin{eqnarray}\label{finwnew2}
I^{-}w\pi^{\mu^{\vee}}U^{+}\cap I^{-}\pi^{\lambda^{\vee}}K&=& \cup_{\sigma\in W}I^{-}w\pi^{\mu^{\vee}}U^{+}\cap I^{-}\pi^{\lambda^{\vee}}U^{+}_{\mathcal{O}}U^{-}_{\pi}\sigma  I^{-}\nonumber\\
&=& \cup_{\sigma\in W}I^{-}w\pi^{\mu^{\vee}}U^{+}\cap I^{-}\pi^{\lambda^{\vee}}\sigma  I^{-},
\end{eqnarray}
where in the last step we use the fact that if $\lambda^{\vee}$ is dominant and regular then $\pi^{\lambda^{\vee}}U^{+}_{\mathcal{O}}\pi^{-\lambda^{\vee}}\subset U^{+}_{\pi}$. Consider $\sigma\in W$ such that
\begin{eqnarray} 
I^{-}w\pi^{\mu^{\vee}}U^{+}\cap I^{-}\sigma\pi^{\sigma\lambda^{\vee}}U^{-}_{\mathcal{O}}\ne\emptyset.
\end{eqnarray} 
Now, 
\begin{eqnarray} 
I^{-}w\pi^{\mu^{\vee}}U^{+}\cap I^{-}\sigma\pi^{\sigma\lambda^{\vee}}U^{-}_{\mathcal{O}}\subset K \pi^{\mu^{\vee}}U^{+}\cap K \pi^{\sigma\lambda^{\vee}}U^{-}.
\end{eqnarray} 
 By the second part of Theorem~\ref{maintheorem2} we get,
\begin{eqnarray}\label{finw2}
\mu^{\vee}\le \sigma\lambda^{\vee}.
\end{eqnarray}
Since $\lambda^{\vee}$ is regular,  if we choose $\lambda^{\vee}$ very large compared to $\mu^{\vee}$ then (\ref{finw2}) holds only for $\sigma=1$. Hence (\ref{finwnew2}) implies 
$$I^{-}w\pi^{\mu^{\vee}}U^{+}\cap I^{-}\pi^{\lambda^{\vee}}K=I^{-}w\pi^{\mu^{\vee}}U^{+}\cap I^{-}\pi^{\lambda^{\vee}}U^{-}_{\mathcal{O}}.$$ 
By (\ref{finw1}), we also have:
\begin{eqnarray*}
U^{-}\cap U^{+}_{w^{-1},\pi}w\pi^{\mu^{\vee}-\lambda^{\vee}}H_{\mathcal{O}}U^{+}\ne\emptyset.
\end{eqnarray*} 
 Finally, Corollary~\ref{usefulcorrolary} implies 
 \begin{eqnarray}\label{boundonw}
 l(w)\le 2\langle \rho, \lambda^{\vee}-\mu^{\vee}\rangle . 
 \end{eqnarray}
 The bound (\ref{boundonw}) proves the Proposition.
 \end{proposition}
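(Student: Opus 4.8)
The plan is to combine Lemma~\ref{lemiwahoridec} with the second (inequality) part of Theorem~\ref{maintheorem2} and with Corollary~\ref{usefulcorrolary}. By Lemma~\ref{lemiwahoridec} the fibers are disjoint and $m_{\lambda^{\vee}}^{-1}(\pi^{\mu^{\vee}})=\sqcup_{w\in W}m_{w,\lambda^{\vee}}^{-1}(\pi^{\mu^{\vee}})$, so it suffices to produce a finite $\Omega\subset W$ outside of which the fiber is empty. By the description (\ref{fibersa1}) of the fiber, and because $w\mapsto w^{-1}$ merely permutes the index set and preserves lengths, this reduces to showing that for $\mu^{\vee}$ fixed and $\lambda^{\vee}$ regular and sufficiently dominant there are only finitely many $w\in W$ with $I^{-}w\pi^{\mu^{\vee}}U^{+}\cap I^{-}\pi^{\lambda^{\vee}}K\ne\emptyset$, and that every such $w$ has bounded length. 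I would carry this out in three moves: strip off the ambient $K$ on the right, use the large-$\lambda^{\vee}$ hypothesis to discard all but the trivial Weyl translate, and then bound $\ell(w)$ by the Joseph-type corollary.

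For the first move I would substitute $K=\cup_{\sigma\in W}I^{+}\sigma I^{-}$ and expand $I^{+}=U^{+}_{\mathcal{O}}U^{-}_{\pi}H_{\mathcal{O}}$ by Iwahori--Matsumoto. Since $\lambda^{\vee}$ is dominant and regular, $\pi^{\lambda^{\vee}}U^{+}_{\mathcal{O}}\pi^{-\lambda^{\vee}}\subset U^{+}_{\pi}\subset I^{-}$, so the $U^{+}_{\mathcal{O}}$-factor is absorbed into the $I^{-}$ to its left; likewise $\sigma^{-1}U^{-}_{\pi}\sigma\subset G_{\pi}\subset I^{-}$ since $G_{\pi}$ is normal in $K$, so that $U^{-}_{\pi}\sigma I^{-}=\sigma I^{-}$. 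These reductions collapse the right-hand side to a union over $\sigma\in W$ of $I^{-}w\pi^{\mu^{\vee}}U^{+}\cap I^{-}\pi^{\lambda^{\vee}}\sigma I^{-}$. Pushing $\pi^{\lambda^{\vee}}$ past the Weyl representative (so that $\pi^{\lambda^{\vee}}\sigma=\sigma\pi^{\sigma^{-1}\lambda^{\vee}}$) and unwinding the last $I^{-}$, any nonempty piece lies inside $K\pi^{\mu^{\vee}}U^{+}\cap K\pi^{\sigma^{-1}\lambda^{\vee}}U^{-}$, so the second part of Theorem~\ref{maintheorem2} forces $\mu^{\vee}\le\sigma^{-1}\lambda^{\vee}$.

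For the second move I observe that $\sigma^{-1}\lambda^{\vee}\le\lambda^{\vee}$ holds for every $\sigma\in W$ (as $\lambda^{\vee}$ is dominant), so the admissible $\sigma$ satisfy $\mu^{\vee}\le\sigma^{-1}\lambda^{\vee}\le\lambda^{\vee}$, and by regularity of $\lambda^{\vee}$ they inject into the finite set $\{\nu^{\vee}:\mu^{\vee}\le\nu^{\vee}\le\lambda^{\vee}\}$. Taking $\lambda^{\vee}$ large enough relative to $\mu^{\vee}$---precisely the estimate $\langle\rho,\lambda^{\vee}-\tau\lambda^{\vee}\rangle>\langle\rho,\mu^{\vee}\rangle$ for all $\tau\ne1$ used in (\ref{limitcont5})---forces $\sigma=1$, exactly as in the proof of Theorem~\ref{maintheorem1}. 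Thus $I^{-}w\pi^{\mu^{\vee}}U^{+}\cap I^{-}\pi^{\lambda^{\vee}}K=I^{-}w\pi^{\mu^{\vee}}U^{+}\cap I^{-}\pi^{\lambda^{\vee}}U^{-}_{\mathcal{O}}$. For the final move, expanding the left $I^{-}$ by Iwahori--Matsumoto and splitting $U^{+}_{\pi}=U^{+}_{w,\pi}U^{+,w}_{\pi}$ (the $w$-stable factor being absorbed after conjugation past $w$), a nonempty intersection produces $U^{-}\cap U^{+}_{w,\pi}w\pi^{\mu^{\vee}-\lambda^{\vee}}H_{\mathcal{O}}U^{+}\ne\emptyset$ with $\mu^{\vee}-\lambda^{\vee}\in Q^{\vee}_{-}$; Corollary~\ref{usefulcorrolary} then gives $\ell(w)\le 2\langle\rho,\lambda^{\vee}-\mu^{\vee}\rangle$, so $\Omega=\{w\in W:\ell(w)\le 2\langle\rho,\lambda^{\vee}-\mu^{\vee}\rangle\}$ works, a finitely generated Coxeter group having only finitely many elements of each length.

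The hard part is the bookkeeping in the second and third moves rather than the formal reductions: one must track precisely which congruence subgroups ($U^{+}_{\mathcal{O}}$, $U^{\pm}_{\pi}$, $U^{+}_{w,\pi}$) survive each conjugation by $\pi^{\lambda^{\vee}}$ or by a Weyl representative, so that after all the absorptions the hypotheses of Theorem~\ref{maintheorem2} and of Corollary~\ref{usefulcorrolary} are genuinely met---in particular that the exponent $\mu^{\vee}-\lambda^{\vee}$ lands in $Q^{\vee}_{-}$, which is exactly what makes the Joseph-type bound applicable. The affine arguments of \cite{BGKP} and \cite{BKP} furnish the template for these manipulations, so the genuinely new point is to confirm that none of the bookkeeping secretly relies on finiteness of the underlying Kac--Moody type.
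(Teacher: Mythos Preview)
Your proposal is correct and follows essentially the same route as the paper: the Iwahori decomposition $K=\cup_{\sigma}I^{+}\sigma I^{-}$ together with the conjugation $\pi^{\lambda^{\vee}}U^{+}_{\mathcal{O}}\pi^{-\lambda^{\vee}}\subset U^{+}_{\pi}$ reduces to a union over $\sigma$, the inequality part of Theorem~\ref{maintheorem2} forces $\sigma=1$ for $\lambda^{\vee}$ large, and Corollary~\ref{usefulcorrolary} then bounds $\ell(w)$. The only discrepancy is a harmless indexing slip: the factor surviving on the left of $w$ after absorbing the $w$-stable part of $U^{+}_{\pi}$ is $U^{+}_{w^{-1},\pi}$ rather than $U^{+}_{w,\pi}$ (since it is $w^{-1}$, not $w$, that conjugates past to the right), which is exactly what the paper records; either way the length bound is unchanged.
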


\section{An Integral and Recursion Relation}\label{Finitesupport}
In this section, we introduce two propotional integrals $I_{w,\lambda^{\vee}}$ and $\tilde{I}_{w,\lambda^{\vee}}$, and prove the convergence of $I_{w,\lambda^{\vee}}$ by showing that it satisfies a recursion relation in terms of a certain operator. This will imply the convergence of $ \tilde{I}_{w,\lambda^{\vee}}$ as well. The finiteness of the level sets of $\tilde{I}_{w,\lambda^{\vee}}$ obtained as a consequence of this convergence will be used to obtain the proof of the Weak Spherical Finiteness in the next section.
 
 \subsection{The Integrals }\label{Int}
Let $\rho$ be the sum of fundamental weights as introduced earlier (see  (\ref{rhodef})).
\begin{definition} We define a function
$$\Phi_{\rho}:G\longrightarrow \mathbb{C}[\Lambda^{\vee}]$$
by the formula $\Phi_{\rho}(g)=q^{-\langle \rho,\mu^{\vee}\rangle}e^{\mu^{\vee}},$ where $g\in G$ has an Iwasawa decomposition $g\in U\pi^{\mu^{\vee}}K$. 
\end{definition}
For $w\in W$, recall the subset $S^{-}_{w^{-1}}$ and the corresponding subgroups $U^{-}_{w^{-1}}$, $U^{-}_{w^{-1},\mathcal{O}}$ and $U^{-}_{w^{-1},\pi}$ from Subsection~\ref{subdec}. The group $U^{-}_{w^{-1}}$ is finite dimensional and carries a natural Haar measure $du^{-}_{w}$ which is normalized such the group $U^{-}_{w^{-1},\mathcal{O}}$ has volume $1$ with respect to this measure. 
\begin{definition}
The integral $I_{w,\lambda^{\vee}}$ is defined by the following equation,
\begin{eqnarray*}
I_{w,\lambda^{\vee}}&=&\int_{U^{-}_{w^{-1},\pi}}\Phi_{\rho}(u^{-}_{w}\pi^{w\lambda^{\vee}})du^{-}_{w}.
\end{eqnarray*}
\end{definition}
Now, we introduce the integral $\tilde{I}_{w,\lambda^{\vee}}$ as follows:\\
The group $U^{-}_{w^{-1},\pi}$ has finite volume with respect to the measure $du^{-}_{w}$. We normalize the restriction of this measure on $U^{-}_{w^{-1},\pi}$ so that the volume of $U^{-}_{w^{-1},\pi}$ becomes equal to $1$ and call this measure $\tilde{du^{-}_{w}}$. 
\begin{definition}
The integral $\tilde{I}_{w,\lambda^{\vee}}$ is defined by the following equation,
\begin{eqnarray*}
\tilde{I}_{w,\lambda^{\vee}}&=&\int_{U^{-}_{w^{-1},\pi}}\Phi_{\rho}(u^{-}_{w}\pi^{w\lambda^{\vee}})\tilde{du^{-}_{w}}.
\end{eqnarray*}
\end{definition}

\subsection{Demazure-Lusztig operator}
Let $v$ be a formal variable and $\mathbb{C}_{v}:=\mathbb{C}[v]$ be the ring of polynomial in $v$. Let $\mathscr{R}:=\mathbb{C}[[e^{-\alpha_{1}^{\vee}}, e^{-\alpha_{2}^{\vee}},\dots, e^{-\alpha_{l}^{\vee}}]]$ be the ring of power series in indeterminates $e^{-\alpha_{i}^{\vee}}$, $i\in I$. Set
\begin{eqnarray}
\mathscr{L}:=\mathbb{C}_{v}\otimes_{\mathbb{C}}\mathscr{R}
\end{eqnarray}
and 
\begin{eqnarray}
\mathscr{L}[W]:=\{\sum_{w\in W}a_{w}[w]\mid a_{w}\in \mathscr{L}\}.
\end{eqnarray}

Now, we consider another formal variable $X$ and define the following rational functions
\begin{eqnarray*}
{\mathbf b}(X):= \frac{v-1}{1-X}\;\;\text{and}\;\;{\mathbf c}(X):= \frac{1-vX}{1-X}.
\end{eqnarray*}
By expanding ${\mathbf b}(X)$ and ${\mathbf c}(X)$ in $X^{-1}$ and using $X=e^{\alpha^{\vee}}$ for some positive coroot $\alpha^{\vee}$ it follows that ${\mathbf b}(X), {\mathbf c}(X)\in\mathscr{L}$. For a coroot  $\alpha^{\vee}$, we shall denote 
\begin{eqnarray}\label{bandc1}
{\mathbf b}(\alpha^{\vee}):= {\mathbf b}(e^{\alpha^{\vee}})\;\text{and}\;\; {\mathbf c}(\alpha^{\vee}):={\mathbf c}(e^{\alpha^{\vee}}).
\end{eqnarray}
\begin{definition}
For $i\in I$, let $\alpha^{\vee}_{i}$ be the simple coroot and $w_{i}=w_{\alpha_{i}}$ be the simple root reflection. A {\it Demazure-Lusztig operator} on $\mathscr{L}$ is defined by
\begin{eqnarray}\label{bandc2}
{\mathbf T}_{w_{i}}:= {\mathbf c}(\alpha^{\vee}_{i})[w_{i}]+{\mathbf b}(\alpha^{\vee}_{i})[1],
\end{eqnarray}
which, by expanding the rational functions, can be seen to be an element of $\mathscr{L}[W]$.
\end{definition}
\noindent The operator ${\mathbf T}_{w_{i}}$ satisfies the following properties.
\begin{proposition}\label{proplo}
For $i\in I$,
\begin{enumerate}[before=\itshape,font=\normalfont]
\item[(1)]For $i\in I$, ${\mathbf T}_{w_{i}}^{2}=(v-1){\mathbf T}_{w_{i}}+v$.\label{Dellutz1}
\item[(2)] The operators ${\mathbf T}_{w_{i}}$ satisfy the braid relations. So, if $w\in W$ has a reduced decomposition $w=w_{i_{1}}w_{i_{2}}\dots w_{i_{n}}$ then 
$${\mathbf T}_{w}={\mathbf T}_{w_{i_{1}}}{\mathbf T}_{w_{i_{2}}}\dots {\mathbf T}_{w_{i_{n}}}$$
and this expansion is independent of the chosen reduced decomposition.
\end{enumerate}
\end{proposition}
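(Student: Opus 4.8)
The plan is to verify the two assertions separately, treating them as an infinite-dimensional formal analogue of the classical Hecke-algebra relations. The only subtlety is that the $\mathbf{T}_{w_i}$ act on the power-series ring $\mathscr{L}$, so one must check at the outset that the composition of two such operators is again a well-defined element of $\mathscr{L}[W]$; this is automatic because each $\mathbf{b}(\alpha^\vee_i)$ and $\mathbf{c}(\alpha^\vee_i)$ lies in $\mathscr{L}$ after expanding in $X^{-1}=e^{-\alpha^\vee_i}$, the group algebra element $[w_i]$ acts on $\mathscr{R}$ by the $W$-action on the $e^{-\alpha^\vee_j}$ (which is continuous for the power-series topology since $w_i$ permutes positive coroots up to finitely many sign changes), and $\mathscr{L}[W]$ is closed under the resulting multiplication. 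With this in hand, part (1) is a direct computation: expand
$$
\mathbf{T}_{w_i}^2=\bigl(\mathbf{c}(\alpha^\vee_i)[w_i]+\mathbf{b}(\alpha^\vee_i)[1]\bigr)^2
=\mathbf{c}(\alpha^\vee_i)\,w_i(\mathbf{c}(\alpha^\vee_i))[1]+\bigl(\mathbf{c}(\alpha^\vee_i)\,w_i(\mathbf{b}(\alpha^\vee_i))+\mathbf{b}(\alpha^\vee_i)\,\mathbf{c}(\alpha^\vee_i)\bigr)[w_i]+\mathbf{b}(\alpha^\vee_i)^2[1],
$$
using $w_i(\alpha^\vee_i)=-\alpha^\vee_i$, i.e. $w_i(e^{\alpha^\vee_i})=e^{-\alpha^\vee_i}$, so that $w_i(\mathbf{b}(X))=\mathbf{b}(X^{-1})$ and $w_i(\mathbf{c}(X))=\mathbf{c}(X^{-1})$. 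A short manipulation of the rational functions $\mathbf{b}(X)=\tfrac{v-1}{1-X}$, $\mathbf{c}(X)=\tfrac{1-vX}{1-X}$ shows that the coefficient of $[w_i]$ collapses to $(v-1)\mathbf{c}(\alpha^\vee_i)$ and the coefficient of $[1]$ collapses to $(v-1)\mathbf{b}(\alpha^\vee_i)+v$; comparing with $(v-1)\mathbf{T}_{w_i}+v$ gives the quadratic relation. The identities needed are $\mathbf{c}(X)\mathbf{c}(X^{-1})=(v-1)\mathbf{c}(X)+v$ and $\mathbf{c}(X)\mathbf{b}(X^{-1})+\mathbf{b}(X)\mathbf{c}(X)=(v-1)\mathbf{c}(X)$, both elementary once cleared of denominators.

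For part (2), the braid relations, I would argue rank by rank on the rank-two parabolic subsystems, since a braid relation only involves two simple reflections $w_i,w_j$ and the subring they generate. For each type of $2\times 2$ block of the GCM ($A_1\times A_1$, $A_2$, $B_2$, $G_2$), one checks the relation $\mathbf{T}_{w_i}\mathbf{T}_{w_j}\mathbf{T}_{w_i}\cdots=\mathbf{T}_{w_j}\mathbf{T}_{w_i}\mathbf{T}_{w_j}\cdots$ ($m_{ij}$ factors on each side) by a finite computation inside $\mathscr{L}[\langle w_i,w_j\rangle]$; this is the standard verification for Demazure–Lusztig operators and one may cite the finite-dimensional source (these operators and their braid relations go back to Lusztig and appear in the references invoked earlier, e.g. \cite{BKP}), observing that the rank-two computation is insensitive to the ambient Kac–Moody structure. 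Once the braid and quadratic relations hold, Matsumoto's theorem (well-definedness of $\mathbf{T}_w$ on reduced words) applies verbatim: any two reduced decompositions of $w$ are connected by braid moves, so the product $\mathbf{T}_{w_{i_1}}\cdots\mathbf{T}_{w_{i_n}}$ depends only on $w$, and we \emph{define} $\mathbf{T}_w$ to be this common value.

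The main obstacle, such as it is, is purely foundational rather than computational: making precise that the infinite product/series manipulations are legitimate, i.e. that $[w_i]$ really does act continuously on $\mathscr{R}=\mathbb{C}[[e^{-\alpha^\vee_1},\dots,e^{-\alpha^\vee_l}]]$ so that $\mathbf{T}_{w_i}$ and its iterates are genuine endomorphisms and $\mathscr{L}[W]$ is closed under composition. The point is that $w_i$ sends $e^{-\alpha^\vee_j}\mapsto e^{-w_i\alpha^\vee_j}=e^{-\alpha^\vee_j+\langle\alpha_i,\alpha^\vee_j\rangle\alpha^\vee_i}$, which for $j\ne i$ is again a monomial in the $e^{-\alpha^\vee_k}$ of the same or higher total degree (since $\langle\alpha_i,\alpha^\vee_j\rangle\le 0$), while $e^{-\alpha^\vee_i}\mapsto e^{\alpha^\vee_i}$ is handled by pairing it with the factor $(1-e^{\alpha^\vee_i})^{-1}$ in $\mathbf{b},\mathbf{c}$, which is a unit times a power series in $e^{-\alpha^\vee_i}$; hence every coefficient that arises is a well-defined element of $\mathscr{L}$, and degree-wise truncation shows the composite operators are continuous. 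After this is dispatched, everything reduces to the finite rank-two identities and Matsumoto's lemma, so the proof is short.
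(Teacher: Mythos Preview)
The paper does not supply a proof of this proposition; it is stated as a known fact about Demazure--Lusztig operators and then used. Your proposal is the standard argument and is essentially correct: direct computation for the quadratic relation, rank-two reduction plus Matsumoto's lemma for the braid relations. Two small corrections are worth making. First, in your summary of the ``identities needed'' you wrote $\mathbf{c}(X)\mathbf{c}(X^{-1})=(v-1)\mathbf{c}(X)+v$, but the $[1]$-coefficient you correctly expanded just above is $\mathbf{c}(X)\mathbf{c}(X^{-1})+\mathbf{b}(X)^2$, and the identity that actually holds is $\mathbf{c}(X)\mathbf{c}(X^{-1})+\mathbf{b}(X)^2=(v-1)\mathbf{b}(X)+v$; the version you stated is false (the two sides have different denominators). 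Second, your list of rank-two types should explicitly note that when $a_{ij}a_{ji}\ge 4$ the order $m_{ij}$ is infinite and there is no braid relation to verify, so only the finite dihedral cases $A_1\times A_1,\,A_2,\,B_2,\,G_2$ arise. With those fixes the sketch is sound; your foundational remark about the $W$-action on $\mathscr{R}$ is a genuine point the paper leaves implicit.
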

\noindent Now, we state and prove the main result of this section. This is an analogue of the results previously proven in \cite[Theorem 4.4.5]{Mac}, \cite[Proposition 7.3.7]{BGKP} and  \cite[Proposition 2.10]{MP2}.
\begin{proposition}\label{proprkgt1}
Let $\lambda^{\vee}$ be a dominant and regular, $w\in W$ be such that $w=w_{\alpha}w'$ and $l(w)=1+l(w')$. Then 
\begin{eqnarray}\label{Operator1}
I_{w,\lambda^{\vee}} &=&{\mathbf T}_{w_{\alpha}}(I_{w',\lambda^{\vee}}).\label{Operator1}
\end{eqnarray}
\end{proposition}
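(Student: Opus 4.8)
\emph{Strategy.} The plan is to prove the asserted recursion $I_{w,\lambda^\vee}={\mathbf T}_{w_\alpha}(I_{w',\lambda^\vee})$ by unwinding the defining integral along the factorization $w=w_\alpha w'$, following the rank-one reduction of \cite[Theorem~4.4.5]{Mac} and \cite[Proposition~7.3.7]{BGKP}. Since $w_\alpha=w(w')^{-1}$ has length one, $\alpha$ is a simple root; write $\tilde w_\alpha:=w_\alpha(1)\in K$ for its lift from \eqref{relationtorus1}, and recall $\langle\rho,\alpha^\vee\rangle=1$, so $w_\alpha\rho=\rho-\alpha$. The three ingredients are: a factorization of the domain $U^-_{w^{-1},\pi}$ compatible with $w=w_\alpha w'$; Fubini, isolating the integration over the single root group $U_{-\alpha,\pi}$; and the classical rank-one Gindikin--Karpelevich computation, whose output is matched against the formula \eqref{bandc2} for ${\mathbf T}_{w_\alpha}$. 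Combined with $I_{1,\lambda^\vee}=q^{-\langle\rho,\lambda^\vee\rangle}e^{\lambda^\vee}$ and the braid relations of Proposition~\ref{proplo}(2), this recursion will in addition give $I_{w,\lambda^\vee}={\mathbf T}_{w}(q^{-\langle\rho,\lambda^\vee\rangle}e^{\lambda^\vee})$, hence its convergence in $\mathscr L$.

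\emph{Factoring the domain.} Because $\ell(w_\alpha w')=\ell(w')+1$ one has $(w')^{-1}\alpha\in\Delta_+$, hence $w^{-1}(-\alpha)=(w')^{-1}\alpha\in\Delta_+$, so $-\alpha\in S^-_{w^{-1}}$; and since $w_\alpha$ permutes $\Delta_+\setminus\{\alpha\}$, a short root-system computation gives the disjoint decomposition $S^-_{w^{-1}}=\{-\alpha\}\sqcup w_\alpha\!\big(S^-_{(w')^{-1}}\big)$, with $w_\alpha(S^-_{(w')^{-1}})\subset\Delta_-$. Consequently $\tilde w_\alpha U^-_{(w')^{-1}}\tilde w_\alpha^{-1}=\langle U_\gamma\mid\gamma\in S^-_{w^{-1}}\setminus\{-\alpha\}\rangle$, and the multiplication map
\begin{eqnarray*}
U_{-\alpha,\pi}\times U^-_{(w')^{-1},\pi}\longrightarrow U^-_{w^{-1},\pi},\qquad(\chi_{-\alpha}(t),v)\longmapsto\chi_{-\alpha}(t)\,\tilde w_\alpha v\tilde w_\alpha^{-1},
\end{eqnarray*}
is a bijection (reduce both factors mod $\pi$; use uniqueness of product decompositions in the finite-dimensional unipotent group $U^-_{w^{-1}}$). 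As $\tilde w_\alpha\in K$, conjugation by it is measure preserving and carries $U^-_{(w')^{-1},\mathcal O}$ onto the $\mathcal O$-points of the second factor; hence $du^-_w=dt\,dv$, where $dt$ normalizes $U_{-\alpha,\mathcal O}$ to volume $1$ and $dv$ is precisely the measure entering $I_{w',\lambda^\vee}$.

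\emph{Unwinding and the rank-one integral.} Write $u^-_w=\chi_{-\alpha}(t)\,\tilde w_\alpha v\tilde w_\alpha^{-1}$. Using $w_\alpha^{-1}w=w'$ we get $\tilde w_\alpha^{-1}\pi^{w\lambda^\vee}=\pi^{w'\lambda^\vee}k_0$ with $k_0\in H_{\mathcal O}\tilde w_\alpha^{-1}\subset K$, so by the right $K$-invariance of $\Phi_\rho$,
\begin{eqnarray*}
\Phi_\rho\!\big(u^-_w\pi^{w\lambda^\vee}\big)=\Phi_\rho\!\big(\chi_{-\alpha}(t)\,\tilde w_\alpha\,(v\pi^{w'\lambda^\vee})\big),
\end{eqnarray*}
and hence, by Fubini,
\begin{eqnarray*}
I_{w,\lambda^\vee}=\int_{U^-_{(w')^{-1},\pi}}\!\Big(\int_{U_{-\alpha,\pi}}\Phi_\rho\!\big(\chi_{-\alpha}(t)\,\tilde w_\alpha\,(v\pi^{w'\lambda^\vee})\big)\,dt\Big)\,dv.
\end{eqnarray*}
For fixed $v$, write the Iwasawa decomposition $v\pi^{w'\lambda^\vee}\in U^+\pi^{\mu^\vee}K$, so that $\Phi_\rho(v\pi^{w'\lambda^\vee})=q^{-\langle\rho,\mu^\vee\rangle}e^{\mu^\vee}$. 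Using $U^+=U_\alpha\cdot U^+_{[\alpha]}$, with $U^+_{[\alpha]}=\langle U_\beta\mid\beta\in\Delta_+\setminus\{\alpha\}\rangle$ normalized by $\tilde w_\alpha$, one pushes the $U^+_{[\alpha]}$-, torus- and $K$-parts to the right and is left with a computation inside the rank-one group $G_\alpha$: split $U_{-\alpha,\pi}$ according to whether $\chi_{-\alpha}(t)\tilde w_\alpha$ fixes or reflects the Iwasawa coweight, sum the reflected (geometric) part, and use $w_\alpha\rho=\rho-\alpha$ to reconcile the powers of $q$; the outcome is
\begin{eqnarray*}
\int_{U_{-\alpha,\pi}}\Phi_\rho\!\big(\chi_{-\alpha}(t)\,\tilde w_\alpha\,(v\pi^{w'\lambda^\vee})\big)\,dt={\mathbf c}(\alpha^\vee)\,q^{-\langle\rho,\mu^\vee\rangle}e^{w_\alpha\mu^\vee}+{\mathbf b}(\alpha^\vee)\,q^{-\langle\rho,\mu^\vee\rangle}e^{\mu^\vee}={\mathbf T}_{w_\alpha}\!\big(\Phi_\rho(v\pi^{w'\lambda^\vee})\big).
\end{eqnarray*}
Integrating this over $v$ and pulling ${\mathbf T}_{w_\alpha}$ out of the sum term by term gives $I_{w,\lambda^\vee}={\mathbf T}_{w_\alpha}\!\big(\int_{U^-_{(w')^{-1},\pi}}\Phi_\rho(v\pi^{w'\lambda^\vee})\,dv\big)={\mathbf T}_{w_\alpha}(I_{w',\lambda^\vee})$, which is the claim.

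\emph{Main obstacle.} The delicate step is the rank-one integral: getting the normalizations exactly right in the presence of the $\pi$-congruence truncations ($U_{-\alpha,\pi}$ and $U^-_{(w')^{-1},\pi}$ in place of $\mathcal O$-points), and checking that the Iwasawa coweight $\mu^\vee$ of $v\pi^{w'\lambda^\vee}$ always lies in the range in which this integral evaluates uniformly to the Demazure--Lusztig formula, with no boundary corrections. This is exactly where the hypothesis that $\lambda^\vee$ is dominant and regular enters: it forces $\langle\alpha,w'\lambda^\vee\rangle=\langle(w')^{-1}\alpha,\lambda^\vee\rangle$ to be large and positive while $v$ varies over a congruence subgroup, pinning $\langle\alpha,\mu^\vee\rangle$ in the required range, and it is also what makes the resulting $\mu^\vee$-sum converge in $\mathscr L$. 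The remaining points --- the measure identity $du^-_w=dt\,dv$ and the absorption of the $U^+_{[\alpha]}$-, torus- and $H_{\mathcal O}$-factors while tracking the units in the $\chi_{\pm\alpha}$ parametrizations --- are routine.
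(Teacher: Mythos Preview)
Your overall strategy --- the factorization $S^-_{w^{-1}}=\{-\alpha\}\sqcup w_\alpha(S^-_{(w')^{-1}})$, the resulting product decomposition of $U^-_{w^{-1},\pi}$ and of the Haar measure, and Fubini --- is exactly the one in the paper. The gap is in the rank-one step. You assert that for each fixed $v$ the inner integral over $U_{-\alpha,\pi}$ equals ${\mathbf T}_{w_\alpha}(\Phi_\rho(v\pi^{w'\lambda^\vee}))$, hence depends only on the Iwasawa coweight $\mu^\vee$ of $v\pi^{w'\lambda^\vee}$. But after you push the $U^+_{[\alpha]}$-, torus- and $K$-parts to the right as you describe, the $U_\alpha$-component $x_\alpha$ of the Iwasawa decomposition of $v\pi^{w'\lambda^\vee}$, conjugated by $\tilde w_\alpha$, becomes an element $\chi_{-\alpha}(s')\in U_{-\alpha}$ which \emph{translates} your integration variable: the inner integral becomes $\int_{t\in s'+\pi\mathcal O}\Phi_\rho(\chi_{-\alpha}(t)\pi^{w_\alpha\mu^\vee})\,dt$. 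There is no reason for $s'\in\pi\mathcal O$, so two $v$'s with the same $\mu^\vee$ can produce different inner integrals, and the pointwise identity you claim does not follow from the sketch.

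The paper handles this by the split you do not make: write $\int_{U_{-\alpha,\pi}}=\int_{U_{-\alpha}(\mathcal K)}-\int_{U_{-\alpha}[\le 0]}$, so $I_{w,\lambda^\vee}=I^1_{w,\lambda^\vee}-I^2_{w,\lambda^\vee}$. In $I^1$ the integration is over the full line, hence the translation by $s'$ (in the paper's notation, by $n_{-\alpha}$) is absorbed; a change of variables with Jacobian $q^{-\langle\alpha,\mu^\vee\rangle}$ then yields $I^1={\mathbf c}(\alpha^\vee)\,[w_\alpha](I_{w',\lambda^\vee})$ cleanly. For $I^2$ one proceeds differently, conjugating $\pi^{w\lambda^\vee}$ to the far left \emph{before} any Iwasawa decomposition of $v\pi^{w'\lambda^\vee}$ is taken, so the $x_\alpha$-shift issue never arises; the dominance of $\lambda^\vee$ enters here, via $n_{w'\alpha}=-\langle\alpha,w'\lambda^\vee\rangle\le 0$, and one obtains $I^2=-{\mathbf b}(\alpha^\vee)\,I_{w',\lambda^\vee}$. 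Subtracting gives the recursion. So your diagnosis of where dominance and regularity enter is also slightly off: they are used to control the $I^2$ tail, not to pin $\langle\alpha,\mu^\vee\rangle$ into a uniform range.
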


\noindent The above proposition has the following corollaries.
\begin{corollary}\label{deluttheorem}
For $w\in W$ and $\lambda\in\Lambda_{+}$ regular, the value of ${\mathbf T}_{w}(e^{\lambda^{\vee}})$ at $v=q^{-1}$ is equal to a constant multiple of the integral $I_{w,\lambda^{\vee}}.$ More precisely, 
\begin{eqnarray}\label{theoremeope}
I_{w,\lambda^{\vee}} &=&q^{-\langle \rho, \lambda^{\vee}\rangle}{\mathbf T}_{w}(e^{\lambda^{\vee}}).
\end{eqnarray}
\begin{proof}\let\qed\relax
Let $w=w_{\alpha_m}w_{\alpha_{m-1}}\dots w_{\alpha_1}$ be a reduced decomposition of $w$, then by Proposition~\ref{proprkgt1} 
\begin{eqnarray}
I_{w,\lambda^{\vee}} &=&{\mathbf T}_{w_{\alpha_{m}}}{\mathbf T}_{w_{\alpha_{m-1}}}\dots {\mathbf T}_{w_{\alpha_{2}}}(I_{w_{1},\lambda^{\vee}}).
\end{eqnarray}
The rank $1$ computation implies that $I_{w_{1},\lambda^{\vee}}=q^{-\langle \rho,\lambda^{\vee}\rangle}{\mathbf T}_{w_{1}}(e^{\lambda^{\vee}})$. Finally, this corollary follows by part (2) of Proposition~\ref{proplo}.
\end{proof}
\end{corollary}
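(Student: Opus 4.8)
The plan is to prove the closed formula by induction on the length $\ell(w)$, using Proposition~\ref{proprkgt1} as the recursive engine and reserving the genuine content to an easy base case. The key observation is that Proposition~\ref{proprkgt1} already encodes the single substantive step---the rank-one reduction $I_{w,\lambda^{\vee}}={\mathbf T}_{w_{\alpha}}(I_{w',\lambda^{\vee}})$, valid whenever $w=w_{\alpha}w'$ with $\ell(w)=1+\ell(w')$---so that the corollary becomes a formal bookkeeping argument, provided I am careful about extracting the numerical scalar $q^{-\langle\rho,\lambda^{\vee}\rangle}$ and about assembling the operators ${\mathbf T}_{w_{\alpha}}$ into ${\mathbf T}_{w}$.

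For the base case $\ell(w)=0$, i.e. $w=1$, I would compute $I_{1,\lambda^{\vee}}$ directly. Since $S^{-}_{1}=\emptyset$, the group $U^{-}_{1^{-1},\pi}$ is trivial and the defining integral collapses to the single value $\Phi_{\rho}(\pi^{\lambda^{\vee}})$. As $\pi^{\lambda^{\vee}}\in U^{+}\pi^{\lambda^{\vee}}K$, the definition of $\Phi_{\rho}$ gives $\Phi_{\rho}(\pi^{\lambda^{\vee}})=q^{-\langle\rho,\lambda^{\vee}\rangle}e^{\lambda^{\vee}}$. Since ${\mathbf T}_{1}=[1]$ acts as the identity, this is exactly $q^{-\langle\rho,\lambda^{\vee}\rangle}{\mathbf T}_{1}(e^{\lambda^{\vee}})$, establishing the base case. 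Equivalently, one may anchor the induction at the rank-one computation $I_{w_{i},\lambda^{\vee}}=q^{-\langle\rho,\lambda^{\vee}\rangle}{\mathbf T}_{w_{i}}(e^{\lambda^{\vee}})$, which follows from Proposition~\ref{proprkgt1} applied to $w=w_{i}$, $w'=1$ together with the computation just made.

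For the inductive step, I fix $w$ with $\ell(w)>0$ and choose a simple reflection $w_{\alpha}$ so that $w=w_{\alpha}w'$ with $\ell(w)=1+\ell(w')$ (peel off the first letter of a reduced word for $w$). By Proposition~\ref{proprkgt1}, $I_{w,\lambda^{\vee}}={\mathbf T}_{w_{\alpha}}(I_{w',\lambda^{\vee}})$, and by the inductive hypothesis $I_{w',\lambda^{\vee}}=q^{-\langle\rho,\lambda^{\vee}\rangle}{\mathbf T}_{w'}(e^{\lambda^{\vee}})$. Because ${\mathbf T}_{w_{\alpha}}$ is an additive operator with coefficients in $\mathscr{L}$ while $q^{-\langle\rho,\lambda^{\vee}\rangle}$ is a numerical scalar, I may pull this scalar through the operator to obtain $I_{w,\lambda^{\vee}}=q^{-\langle\rho,\lambda^{\vee}\rangle}\,{\mathbf T}_{w_{\alpha}}{\mathbf T}_{w'}(e^{\lambda^{\vee}})$.

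It then remains to identify the composite ${\mathbf T}_{w_{\alpha}}{\mathbf T}_{w'}$ with ${\mathbf T}_{w}$. Since $\ell(w)=1+\ell(w')$, concatenating the single letter $w_{\alpha}$ with a reduced word for $w'$ produces a reduced word for $w$; hence by part (2) of Proposition~\ref{proplo}, which guarantees that ${\mathbf T}_{w}$ is well defined and computable as the product of the ${\mathbf T}_{w_{i}}$ over any reduced expression, we have ${\mathbf T}_{w_{\alpha}}{\mathbf T}_{w'}={\mathbf T}_{w}$. This yields $I_{w,\lambda^{\vee}}=q^{-\langle\rho,\lambda^{\vee}\rangle}{\mathbf T}_{w}(e^{\lambda^{\vee}})$ and closes the induction. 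I expect the only delicate points to be purely formal: remembering that all operator identities are understood after the specialization $v=q^{-1}$ (already built into Proposition~\ref{proprkgt1}, since its right-hand side is the actual integral), and justifying the scalar extraction; the analytic content has already been discharged in Proposition~\ref{proprkgt1} and the rank-one base case.
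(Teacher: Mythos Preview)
Your proposal is correct and follows essentially the same approach as the paper: both arguments iterate Proposition~\ref{proprkgt1} along a reduced word, anchor the recursion at the rank-one (or trivial) case, and invoke part~(2) of Proposition~\ref{proplo} to assemble the product of simple operators into ${\mathbf T}_{w}$. Your formulation is slightly cleaner in that you explicitly treat the base case $w=1$ (where $U^{-}_{1,\pi}$ is trivial and the integral reduces to $\Phi_{\rho}(\pi^{\lambda^{\vee}})$), whereas the paper appeals directly to the rank-one computation~(\ref{fin}); but this is a cosmetic difference rather than a substantive one.
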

 
\begin{corollary}\label{fsuppc2}
With the same assumptions as above, the integral $I_{w,\lambda^{\vee}}$ converges in the following sense: there exists a finite subset $\mathfrak{B}\subset \Lambda^{\vee}$ such that 
$$I_{w,\lambda^{\vee}}=\sum_{\mu^{\vee}\in\mathfrak{B}}c_{\mu^{\vee}}e^{\mu^{\vee}}$$
 with $c_{\mu^{\vee}}\in\mathbb{C}$ for all $\mu^{\vee}\in\mathfrak{B}$. 
 \begin{proof}
By Proposition~\ref{proprkgt1}, the assertion follows by showing ${\mathbf T}_{w}(e^{\lambda^{\vee}})\in \mathbb{C}[\Lambda^{\vee}]$ at $v=q^{-1}$. An affine version of this statement is given in \cite[Section 4.3]{Mac4} which extends to general Kac-Moody root systems as well.  
 \end{proof}
\end{corollary}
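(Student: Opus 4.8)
The plan is to derive the finite-support conclusion directly from the closed formula of Corollary~\ref{deluttheorem}, reducing the whole statement to a single rank-one cancellation. By Corollary~\ref{deluttheorem} we have
$$I_{w,\lambda^{\vee}}=q^{-\langle\rho,\lambda^{\vee}\rangle}\,{\mathbf T}_{w}(e^{\lambda^{\vee}})\big|_{v=q^{-1}},$$
so, since $q^{-\langle\rho,\lambda^{\vee}\rangle}$ is a scalar and specialization at $v=q^{-1}$ cannot enlarge the support, it suffices to prove that ${\mathbf T}_{w}(e^{\lambda^{\vee}})$, a priori computed inside the completed ring $\mathscr{L}$, actually lies in the finite-support subalgebra $\mathbb{C}[\Lambda^{\vee}]\otimes_{\mathbb{C}}\mathbb{C}_{v}$. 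Granting this, one takes $\mathfrak{B}$ to be the (finite) support of ${\mathbf T}_{w}(e^{\lambda^{\vee}})$ and reads off each $c_{\mu^{\vee}}\in\mathbb{C}$ by evaluating the corresponding coefficient at $v=q^{-1}$ and scaling by $q^{-\langle\rho,\lambda^{\vee}\rangle}$.

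The reduction to rank one proceeds as follows. By part (2) of Proposition~\ref{proplo}, for a reduced decomposition $w=w_{i_{1}}\cdots w_{i_{n}}$ we have ${\mathbf T}_{w}={\mathbf T}_{w_{i_{1}}}\cdots{\mathbf T}_{w_{i_{n}}}$, a composition of finitely many generator operators. It therefore suffices to show that each ${\mathbf T}_{w_{i}}$ maps $\mathbb{C}[\Lambda^{\vee}]\otimes_{\mathbb{C}}\mathbb{C}_{v}$ into itself; by $\mathbb{C}_{v}$-linearity this in turn reduces to the claim that ${\mathbf T}_{w_{i}}(e^{\mu^{\vee}})$ has finite support for every $\mu^{\vee}\in\Lambda^{\vee}$. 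Granting the claim, the composition ${\mathbf T}_{w}$ preserves finite support, and since $e^{\lambda^{\vee}}\in\mathbb{C}[\Lambda^{\vee}]$ we obtain ${\mathbf T}_{w}(e^{\lambda^{\vee}})\in\mathbb{C}[\Lambda^{\vee}]\otimes_{\mathbb{C}}\mathbb{C}_{v}$, as required.

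It remains to establish the rank-one cancellation. Writing $X=e^{\alpha_{i}^{\vee}}$ and $n=\langle\alpha_{i},\mu^{\vee}\rangle$, the Weyl action gives $e^{w_{i}\mu^{\vee}}=e^{\mu^{\vee}}X^{-n}$, so by the definitions of ${\mathbf b}$ and ${\mathbf c}$,
$${\mathbf T}_{w_{i}}(e^{\mu^{\vee}})={\mathbf c}(\alpha_{i}^{\vee})\,e^{w_{i}\mu^{\vee}}+{\mathbf b}(\alpha_{i}^{\vee})\,e^{\mu^{\vee}}=e^{\mu^{\vee}}\,\frac{(1-vX)X^{-n}+(v-1)}{1-X}.$$
Each of ${\mathbf b}(\alpha_{i}^{\vee})$ and ${\mathbf c}(\alpha_{i}^{\vee})$ is a genuine infinite power series in $e^{-\alpha_{i}^{\vee}}$, so neither summand has finite support; the point is that their sum does. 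Indeed the numerator $(1-vX)X^{-n}+(v-1)$ is a Laurent polynomial in $X$ which vanishes at $X=1$, hence is divisible by $1-X$ in $\mathbb{C}_{v}[X,X^{-1}]$, and the quotient is again a Laurent polynomial. This exhibits ${\mathbf T}_{w_{i}}(e^{\mu^{\vee}})$ as a finite $\mathbb{C}_{v}$-linear combination of exponentials, proving the claim.

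The main obstacle is conceptual rather than computational. A priori the operators ${\mathbf T}_{w}$ live in the completed twisted algebra $\mathscr{L}[W]$ and act through genuinely infinite power-series coefficients, so the naive expansion of ${\mathbf T}_{w}(e^{\lambda^{\vee}})$ has infinite support, and moreover the $W$-action on $\mathscr{L}$ is delicate for infinite-support elements. What rescues the argument is that the computation never leaves $\mathbb{C}[\Lambda^{\vee}]$: the cancellation at $X=1$ forces every intermediate result back into the finite-support subalgebra, where the Weyl action is unproblematic, and because $w$ has finite length only finitely many operators are composed even though $W$ is infinite. This is precisely the Kac-Moody extension of Macdonald's affine computation invoked as \cite[Section 4.3]{Mac4}.
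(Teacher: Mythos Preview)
Your proof is correct and follows essentially the same strategy as the paper's: reduce via Corollary~\ref{deluttheorem} to showing ${\mathbf T}_{w}(e^{\lambda^{\vee}})\in\mathbb{C}[\Lambda^{\vee}]\otimes\mathbb{C}_{v}$, then handle this by a rank-one cancellation. The paper simply cites Macdonald's affine computation for this last step, whereas you carry out the divisibility-by-$(1-X)$ argument explicitly; your version is more self-contained but identical in spirit.
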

The way we defined Haar measure, it is easy to verify that there exists a constant $C>0$ such that 
\begin{eqnarray}\label{intcomparison}
\tilde{I}_{w,\lambda^{\vee}}=CI_{w,\lambda^{\vee}}.
\end{eqnarray}
Corollary~\ref{fsuppc2} and (\ref{intcomparison}) imply that the integral $\tilde{I}_{w,\lambda^{\vee}}$ also converges.
\noindent The proof of Proposition~\ref{proprkgt1} will be carried out in next two subsections.
\subsection{Rank 1 Proof}
 First, we prove the proposition in rank one where the computations are very similar to what Macdonald did in Proposition 4.3.1 from \cite{Mac1} for a slightly different integral.\\

\noindent {\bf Step 1: Decomposition }\\
The integral $I_{w_{\alpha},\lambda^{\vee}}$ can be split into two parts
\begin{eqnarray}
I_{w_{\alpha},\lambda^{\vee}}&=&\int_{U^{-}_{w_{\alpha},\pi}}\Phi_{\rho}(u^{-}_{w_{\alpha}}\pi^{w_{\alpha}\lambda^{\vee}})du^{-}_{w_{\alpha}}=I^{1}_{w_{\alpha},\lambda^{\vee}}-I^{2}_{w_{\alpha},\lambda^{\vee}},\label{diff}
\end{eqnarray}
where 
\begin{eqnarray}
I^{1}_{w_{\alpha},\lambda^{\vee}}&=&\int_{U_{-\alpha}(\mathcal{K})}\Phi_{\rho}(u_{-\alpha}\pi^{w_{\alpha}\lambda^{\vee}})du_{-{\alpha}},\\
 I^{2}_{w_{\alpha},\lambda^{\vee}}&=&\int_{U_{-\alpha}[\le 0]}\Phi_{\rho}(u_{-\alpha}\pi^{w_{\alpha}\lambda^{\vee}})du_{-{\alpha}},
 \end{eqnarray}
and where $U_{-\alpha}[\le 0]=\cup_{n\le 0}U_{-\alpha}[ n]$ and for $n\le 0$, 
\begin{eqnarray}
 U_{-\alpha}[ n]=\{u_{-\alpha}(t): val(t)=n\}.
\end{eqnarray}
\noindent {\bf Step 2: Evaluation of $I^{2}_{w_{\alpha},\lambda^{\vee}}$}\\
 We start by evaluating $I^{2}_{w_{\alpha},\lambda^{\vee}}$, which can be written as
\begin{eqnarray}\label{rank1piece1}
I^{2}_{w_{\alpha},\lambda^{\vee}}=\sum_{t=0}^{-\infty }\int_{U_{-\alpha}[t]}\Phi_{\rho}(u_{-\alpha}\pi^{w_{\alpha}\lambda^{\vee}})du_{-{\alpha}}.
\end{eqnarray}
Let $s\in\mathcal{K}$ with $val(s)=t$, $t\le 0$ and $s=\pi^{t}u$ for some $u\in \mathcal{O}^{\ast}$. We use the following identity which can be proven by using the relations (\ref{relationtorus1}) and (\ref{relationtorus2}).
\begin{eqnarray}
u_{-\alpha}(s)&=&u_{\alpha}(s^{-1})\pi^{-t\alpha^{\vee}}w_{\alpha}u_{\alpha}(s^{-1})
\end{eqnarray}
and write
\begin{eqnarray*}
u_{-\alpha}(s)\pi^{w_{\alpha}\lambda^{\vee}}&=&u_{\alpha}(s^{-1})\pi^{-t\alpha^{\vee}}w_{\alpha}u_{\alpha}(s^{-1})\pi^{w_{\alpha}\lambda^{\vee}}\\
&=&u_{\alpha}(s^{-1})\pi^{\lambda^{\vee}-t\alpha^{\vee}}w_{\alpha}u_{\alpha}(\pi^{\langle \alpha, -w_{\alpha}\lambda^{\vee}\rangle}s^{-1})\\
&=&u_{\alpha}(s^{-1})\pi^{\lambda^{\vee}-t\alpha^{\vee}}w_{\alpha}u_{\alpha}(\pi^{\langle \alpha, \lambda^{\vee}\rangle}s^{-1}).
\end{eqnarray*}
Since $\lambda^{\vee}$ is dominant and regular, $\langle \alpha, \lambda^{\vee}\rangle>0$. Therefore, 
\begin{eqnarray*}
\Phi_{\rho}(u_{-\alpha}\pi^{w_{\alpha}\lambda^{\vee}})&=&q^{-\langle\rho ,\lambda^{\vee}-t\alpha^{\vee}\rangle}e^{\lambda^{\vee}-t\alpha^{\vee}}\\
&=&q^{-\langle\rho ,\lambda^{\vee}\rangle}q^{t}e^{\lambda^{\vee}-t\alpha^{\vee}}.
\end{eqnarray*}
The integral in the sum (\ref{rank1piece1}) solves as,
\begin{eqnarray*}
\int_{U_{-\alpha}[t]}\Phi_{\rho}(u_{-\alpha}\pi^{w_{\alpha}\lambda^{\vee}})du_{-{\alpha}}&=&\int_{U_{-\alpha}[t]}q^{-\langle\rho ,\lambda^{\vee}\rangle}q^{t}e^{\lambda^{\vee}-t\alpha^{\vee}}du_{-{\alpha}}\\
&=&q^{-\langle\rho ,\lambda^{\vee}\rangle}q^{t}e^{\lambda^{\vee}-t\alpha^{\vee}}Vol(U_{-\alpha}[t])\\
&=&q^{-\langle\rho ,\lambda^{\vee}\rangle}q^{t}e^{\lambda^{\vee}-t\alpha^{\vee}}(q^{-t}-q^{-t-1})\\
&=&q^{-\langle\rho ,\lambda^{\vee}\rangle}e^{\lambda^{\vee}}(1-q^{-1})e^{-t\alpha^{\vee}}.
\end{eqnarray*}
Hence,
\begin{eqnarray}
I^{2}_{w_{\alpha},\lambda^{\vee}}&=&\sum_{t=0}^{-\infty }q^{-\langle\rho ,\lambda^{\vee}\rangle}e^{\lambda^{\vee}}(1-q^{-1})e^{-t\alpha^{\vee}}\nonumber\\
&=&q^{-\langle\rho ,\lambda^{\vee}\rangle}\frac{1-q^{-1}}{1-e^{\alpha^{\vee}}}e^{\lambda^{\vee}}.\label{I1a}
\end{eqnarray}

\noindent {\bf Step 3: Evaluation of $I^{1}_{w_{\alpha},\lambda^{\vee}}$}\\
Next, we compute $I^{1}_{w_{\alpha},\lambda^{\vee}}$. By the change of variables $u_{-\alpha}\mapsto \pi^{w_{\alpha}\lambda^{\vee}}u_{-\alpha}\pi^{-w_{\alpha}\lambda^{\vee}}$,
we get
\begin{eqnarray}
I^{1}_{w_{\alpha},\lambda^{\vee}}&=&\int_{U_{-\alpha}(\mathcal{K})}\Phi_{\rho}(u_{-\alpha}\pi^{w_{\alpha}\lambda^{\vee}})du_{-{\alpha}}\nonumber\\
&=&\jmath(u_{-\alpha}\pi^{w_{\alpha}\lambda^{\vee}}) \int_{U_{-\alpha}(\mathcal{K})}\Phi_{\rho}(\pi^{w_{\alpha}\lambda^{\vee}}u_{-\alpha})du_{-{\alpha}},\label{I1c}
\end{eqnarray}
where $\jmath(u_{-\alpha}\pi^{w_{\alpha}\lambda^{\vee}})$ is a {\it Jacobian} factor that is equal to $q^{-\langle\alpha ,\lambda^{\vee}\rangle}$. Now, we have two cases:\\
\noindent {\bf Case 1:} if $u_{-\alpha}\in U_{-\alpha}(\mathcal{O})$. Then $\Phi_{\rho}(\pi^{w_{\alpha}\lambda^{\vee}}u_{-\alpha})=q^{-\langle\rho ,w_{\alpha}\lambda^{\vee}-\rangle}e^{w_{\alpha}\lambda^{\vee}}$ and hence 
$$\int_{U_{-\alpha}(\mathcal{O})}\Phi_{\rho}(\pi^{w_{\alpha}\lambda^{\vee}}u_{-\alpha})du_{-{\alpha}}=q^{-\langle\rho ,w_{\alpha}\lambda^{\vee}\rangle}e^{w_{\alpha}\lambda^{\vee}} Vol(U_{-\alpha}(\mathcal{O}))=q^{-\langle\rho ,w_{\alpha}\lambda^{\vee}\rangle}e^{w_{\alpha}\lambda^{\vee}}.$$
\noindent {\bf Case 2:} If $u_{-\alpha}\notin U_{-\alpha}(\mathcal{O})$, then
\begin{eqnarray*}
\pi^{w_{\alpha}\lambda^{\vee}}u_{-\alpha}(s)&=&\pi^{w_{\alpha}\lambda^{\vee}}u_{\alpha}(s^{-1})\pi^{-val(s)\alpha^{\vee}}w_{\alpha}u_{\alpha}(s^{-1})\\
&=&u_{\alpha}(\pi^{-\langle\alpha ,\lambda^{\vee}\rangle}s^{-1})\pi^{w_{\alpha}\lambda^{\vee}-val(s)\alpha^{\vee}}w_{\alpha}u_{\alpha}(s^{-1}).
\end{eqnarray*}
So,\\
$\Phi_{\rho}(\pi^{w_{\alpha}\lambda^{\vee}}u_{-\alpha}(s))=q^{-\langle\rho ,w_{\alpha}\lambda^{\vee}-val(s)\alpha^{\vee}\rangle}e^{w_{\alpha}\lambda^{\vee}-val(s)\alpha^{\vee}}$
$=q^{-\langle\rho ,w_{\alpha}\lambda^{\vee}\rangle}q^{val(s)}e^{w_{\alpha}\lambda^{\vee}}e^{-val(s)\alpha^{\vee}}$
$=q^{-\langle\rho ,\lambda^{\vee}\rangle}q^{\langle\alpha ,\lambda^{\vee}\rangle}e^{w_{\alpha}\lambda^{\vee}}q^{val(s)}e^{-val(s)\alpha^{\vee}}.$\\
Putting the values of the function for both cases in (\ref{I1c}), we obtain
\begin{eqnarray}
I^{1}_{w_{\alpha},\lambda^{\vee}}&=&\jmath(u_{-\alpha}\pi^{w_{\alpha}\lambda^{\vee}})q^{-\langle\rho-\alpha ,\lambda^{\vee}\rangle}e^{w_{\alpha}\lambda^{\vee}}[1+(1-q^{-1})e^{\alpha^{\vee}}+(1-q^{-1})e^{2\alpha^{\vee}}+\dots]\nonumber\\
&=&q^{-\langle\rho ,\lambda^{\vee}\rangle}e^{w_{\alpha}\lambda^{\vee} } [1+\frac{(1-q^{-1})e^{\alpha^{\vee}}}{1-e^{\alpha^{\vee}}}]\nonumber\\
&=&q^{-\langle\rho ,\lambda^{\vee}\rangle}e^{w_{\alpha}\lambda^{\vee} } \frac{1-q^{-1}e^{\alpha^{\vee}}}{1-e^{\alpha^{\vee}}}. \label{I2b}
\end{eqnarray}
\\
\noindent {\bf Step 4: Conclusion}\\
Using (\ref{I2b}) and (\ref{I1a}) in (\ref{diff}), we get
\begin{eqnarray}
I_{w_{\alpha},\lambda^{\vee}}&=&q^{-\langle\rho ,\lambda^{\vee}\rangle}\frac{1-q^{-1}e^{\alpha^{\vee}}}{1-e^{\alpha^{\vee}}}e^{-w_{\alpha}\lambda^{\vee} } -q^{-\langle\rho ,\lambda^{\vee}\rangle}\frac{1-q^{-1}}{1-e^{\alpha^{\vee}}}e^{\lambda^{\vee}}\nonumber\\
&=&q^{-\langle\rho ,\lambda^{\vee}\rangle}[{\mathbf c}(\alpha^{\vee}_{i})e^{w_{i} \lambda^{\vee}}+{\mathbf  b}(\alpha^{\vee}_{i})e^{ \lambda^{\vee}}]\nonumber\\
&=&q^{-\langle\rho ,\lambda^{\vee}\rangle}{\mathbf  T}_{w_{\alpha_{i}}}(e^{\lambda^{\vee}}).\label{fin}
\end{eqnarray}

\subsection{Higher Rank Proof}
The assertion in higher rank is proven below.\\
\noindent {\bf Step 1: Preliminary Reduction:}\\
We will use the following description of the elements of $S^{-}_{w^{-1}}=\Delta^{-}\cap w\Delta^{+}$ which can be verified easily.
\begin{lemma}
Let $w\in W$ be such that $w=w_{\alpha}w'$ and $l(w)=1+l(w')$. Then
\begin{eqnarray}
S^{-}_{w^{-1}}&=&\{-\alpha\}\cup \{w_{\alpha}\beta\mid \beta \in S^{-}_{(w')^{-1}}\}.
\end{eqnarray}
\end{lemma}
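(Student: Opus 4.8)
The plan is to establish the two inclusions together with the disjointness of the union, relying only on two standard facts about the Weyl group of a Kac--Moody algebra (see \cite{Kac1}): (a) since $\ell(w_{\alpha}w')=1+\ell(w')$, the root $(w')^{-1}\alpha$ is positive; and (b) the simple reflection $w_{\alpha}$ permutes $\Delta^{+}\setminus\{\alpha\}$, hence also permutes $\Delta^{-}\setminus\{-\alpha\}$. I also use the identity $S^{-}_{w^{-1}}=\Delta^{-}\cap w\Delta^{+}$ recorded just above, so that a root lies in $S^{-}_{w^{-1}}$ precisely when it can be written as $w\gamma$ with $\gamma\in\Delta^{+}$ and $w\gamma\in\Delta^{-}$.

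For the inclusion $\supseteq$, I would first check $-\alpha\in S^{-}_{w^{-1}}$: one has $w^{-1}(-\alpha)=(w')^{-1}w_{\alpha}(-\alpha)=(w')^{-1}\alpha$, which is positive by (a), while $-\alpha\in\Delta^{-}$; hence $-\alpha\in\Delta^{-}\cap w\Delta^{+}$. Next, for $\beta\in S^{-}_{(w')^{-1}}$, write $\beta=w'\gamma$ with $\gamma\in\Delta^{+}$ and $\beta\in\Delta^{-}$. Then $w_{\alpha}\beta=w_{\alpha}w'\gamma=w\gamma\in w\Delta^{+}$. Moreover $\beta\neq-\alpha$, for otherwise $-\alpha=w'\gamma$ forces $(w')^{-1}\alpha\in\Delta^{-}$, contradicting (a). So $\beta\in\Delta^{-}\setminus\{-\alpha\}$, and by (b) $w_{\alpha}\beta\in\Delta^{-}$; therefore $w_{\alpha}\beta\in S^{-}_{w^{-1}}$.

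For the inclusion $\subseteq$, take $\delta\in S^{-}_{w^{-1}}$, say $\delta=w\gamma=w_{\alpha}(w'\gamma)$ with $\gamma\in\Delta^{+}$ and $\delta\in\Delta^{-}$. If $w'\gamma=\alpha$ then $\delta=w_{\alpha}\alpha=-\alpha$. Otherwise set $\beta:=w'\gamma$; here $\beta\neq-\alpha$, since $\beta=-\alpha$ would make $\gamma=-(w')^{-1}\alpha$ negative, contradicting (a). Thus $\beta\notin\{\pm\alpha\}$ with $w_{\alpha}\beta=\delta\in\Delta^{-}$; by (b), $\beta$ cannot be positive (else $w_{\alpha}\beta\in\Delta^{+}$), so $\beta\in\Delta^{-}$, and then $\beta=w'\gamma\in\Delta^{-}\cap w'\Delta^{+}=S^{-}_{(w')^{-1}}$ with $\delta=w_{\alpha}\beta$. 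Finally, the union is disjoint: $-\alpha=w_{\alpha}\beta$ with $\beta\in S^{-}_{(w')^{-1}}$ would give $\beta=w_{\alpha}(-\alpha)=\alpha\in\Delta^{+}$, contradicting $S^{-}_{(w')^{-1}}\subset\Delta^{-}$.

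The only point that requires care is tracking how the exceptional root $\pm\alpha$ behaves under $w_{\alpha}$; fact (b) is the real engine of the argument and everything else is bookkeeping, which is presumably why the lemma is asserted to be easily verified.
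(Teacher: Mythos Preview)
Your proof is correct and is precisely the routine verification the paper has in mind when it says the lemma ``can be verified easily''; the paper gives no argument beyond that remark, and your use of the two standard facts (that $\ell(w_{\alpha}w')=1+\ell(w')$ forces $(w')^{-1}\alpha\in\Delta^{+}$, and that $w_{\alpha}$ permutes $\Delta^{+}\setminus\{\alpha\}$) together with the description $S^{-}_{w^{-1}}=\Delta^{-}\cap w\Delta^{+}$ is exactly the intended bookkeeping.
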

\noindent The above lemma implies the following decomposition of $U^{-}_{w^{-1}}$.
\begin{lemma}\label{lem6.3}
By assuming the conditions on $w\in W$ from the above lemma, each $u^{-}_{w}\in U^{-}_{w^{-1}}$ can be written as $u^{-}_{w}=u_{-\alpha}w_{\alpha}u^{-}_{w'}w_{\alpha},$
where $u_{-\alpha}\in U_{-\alpha}$ and $u^{-}_{w'}\in U^{-}_{(w')^{-1}}$.
\end{lemma}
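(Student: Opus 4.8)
The plan is to upgrade the root-set identity of the preceding lemma, $S^{-}_{w^{-1}}=\{-\alpha\}\sqcup\{w_{\alpha}\beta\mid\beta\in S^{-}_{(w')^{-1}}\}$, to a group decomposition of $U^{-}_{w^{-1}}=\langle U_{\gamma}\mid\gamma\in S^{-}_{w^{-1}}\rangle$. Put $N:=\langle U_{w_{\alpha}\beta}\mid\beta\in S^{-}_{(w')^{-1}}\rangle$; since conjugation by (any lift of) $w_{\alpha}$ carries $B_{\beta}$ to $B_{w_{\alpha}\beta}$, and hence the root group $U_{\beta}$ to $U_{w_{\alpha}\beta}$, we get $N=w_{\alpha}U^{-}_{(w')^{-1}}w_{\alpha}^{-1}$ and $U^{-}_{w^{-1}}=\langle U_{-\alpha},N\rangle$. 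So the whole statement reduces to showing that $U_{-\alpha}$ normalises $N$: granting this, $U_{-\alpha}N$ is a subgroup, necessarily equal to $U^{-}_{w^{-1}}$, and every $u^{-}_{w}\in U^{-}_{w^{-1}}$ factors as $u_{-\alpha}\cdot w_{\alpha}u^{-}_{w'}w_{\alpha}^{-1}$ with $u_{-\alpha}\in U_{-\alpha}$ and $u^{-}_{w'}\in U^{-}_{(w')^{-1}}$, which is the asserted form.

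For the normalisation I would use (RD1). For $\beta\in S^{-}_{(w')^{-1}}$ the pair $\{-\alpha,w_{\alpha}\beta\}$ is prenilpotent (both roots are negative, and $w^{-1}$ sends both into $\Delta_{+}$ since both lie in $S^{-}_{w^{-1}}=\Delta_{-}\cap w(\Delta_{+})$), so $[U_{-\alpha},U_{w_{\alpha}\beta}]$ lies in the group generated by the $U_{\gamma}$ with $\gamma$ in the finite interval $]-\alpha,w_{\alpha}\beta[$ of roots $a(-\alpha)+b(w_{\alpha}\beta)$, $a,b\in\mathbb{Z}_{>0}$. The key claim is that this interval is contained in $\{w_{\alpha}\beta'\mid\beta'\in S^{-}_{(w')^{-1}}\}$, hence in $N$: indeed $S^{-}_{w^{-1}}$ is closed under root addition (a sum of negative roots which is a root is negative, and $w^{-1}$ of it is a sum of positive roots, hence positive), so the interval lies in $S^{-}_{w^{-1}}=\{-\alpha\}\sqcup\{w_{\alpha}\beta'\}$, and it cannot contain $-\alpha$, because $a(-\alpha)+b(w_{\alpha}\beta)=-\alpha$ with $a,b\ge1$ would force the nonzero vector $b\,w_{\alpha}\beta$ to be a multiple of $\alpha$, whereas $w_{\alpha}\beta$ is a real root with $\beta\neq\pm\alpha$ (here $\beta$ is real since $W$ preserves the sign of imaginary roots, $\beta\neq\alpha$ since $\alpha\in\Delta_{+}$, and $\beta\neq-\alpha$ since the length hypothesis $\ell(w_{\alpha}w')=\ell(w')+1$ gives $(w')^{-1}\alpha\in\Delta_{+}$), and a real root proportional to $\alpha$ is $\pm\alpha$. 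Once the claim is established, $[U_{-\alpha},U_{w_{\alpha}\beta}]\subseteq N$ for all $\beta$, so $u\,U_{w_{\alpha}\beta}\,u^{-1}\subseteq N$ for every $u\in U_{-\alpha}$, whence $uNu^{-1}\subseteq N$; applying this to $u^{-1}$ as well gives $uNu^{-1}=N$.

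The main obstacle is exactly this combinatorial claim — keeping the interval $]-\alpha,w_{\alpha}\beta[$ inside $w_{\alpha}(S^{-}_{(w')^{-1}})$ and away from $-\alpha$ — the rest being formal. A minor bookkeeping point is that \enquote{conjugation by $w_{\alpha}$} must be read through a lift $\tilde w_{\alpha}\in N$, but different lifts differ by an element of $H$, which preserves each root group, so nothing depends on the choice. One could instead phrase the whole argument as the standard product decomposition $U_{\Psi}=U_{\Psi_{1}}U_{\Psi_{2}}$ for a prenilpotent set of real roots $\Psi=S^{-}_{w^{-1}}$ split as the disjoint union of the single root $\Psi_{1}=\{-\alpha\}$ and the closed ideal $\Psi_{2}=w_{\alpha}(S^{-}_{(w')^{-1}})$, but the direct commutator computation above stays within the axioms RD1--RD5 already available.
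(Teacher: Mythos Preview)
Your argument is correct. The paper does not actually supply a proof of this lemma: it simply writes ``The above lemma implies the following decomposition of $U^{-}_{w^{-1}}$'' and moves on. What you have written is precisely the standard justification that the paper is tacitly invoking --- upgrading the root-set identity $S^{-}_{w^{-1}}=\{-\alpha\}\sqcup w_{\alpha}(S^{-}_{(w')^{-1}})$ to the product decomposition $U^{-}_{w^{-1}}=U_{-\alpha}\cdot w_{\alpha}U^{-}_{(w')^{-1}}w_{\alpha}^{-1}$ by checking, via the commutator axiom (RD1), that $U_{-\alpha}$ normalises the second factor. Your handling of the combinatorial point (that no $\gamma\in\,]-\alpha,w_{\alpha}\beta[$ can equal $-\alpha$, using $(w')^{-1}\alpha\in\Delta_{+}$ from the length hypothesis) and of the lift ambiguity for $w_{\alpha}$ is fine. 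So this is not a genuinely different route; it is the omitted details made explicit.
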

Lemma~\ref{lem6.3} yields the following splitting of the integral $I_{w,\lambda^{\vee}}$:
\begin{eqnarray*}
I_{w,\lambda^{\vee}}&=&\int_{U^{-}_{w^{-1},\pi}}\Phi_{\rho}(u^{-}_{w}\pi^{w\lambda^{\vee}})du^{-}_{w}\\
&=&\int_{U_{-\alpha,\pi}}\int_{U^{-}_{(w')^{-1},\pi}}\Phi_{\rho}(u_{-\alpha}w_{\alpha}u^{-}_{w'}w_{\alpha}\pi^{w\lambda^{\vee}})du_{-\alpha}du^{-}_{w'}\\
&=&I^{1}_{w,\lambda^{\vee}}-I^{2}_{w,\lambda^{\vee}},
\end{eqnarray*}
where
$$I^{1}_{w,\lambda^{\vee}}=\int_{U_{-\alpha}(\mathcal{K})}\int_{U^{-}_{(w')^{-1},\pi}}\Phi_{\rho}(u_{-\alpha}w_{\alpha}u^{-}_{w'}w_{\alpha}\pi^{w\lambda^{\vee}})du_{-\alpha}du^{-}_{w'}$$
and 
$$I^{2}_{w,\lambda^{\vee}}=\int_{U_{-\alpha}[\le 0]}\int_{(U^{-}_{w')^{-1},\pi}}\Phi_{\rho}(u_{-\alpha}w_{\alpha}u^{-}_{w'}w_{\alpha}\pi^{w\lambda^{\vee}})du_{-\alpha}du^{-}_{w'}.$$

\noindent {\bf Step 2: Evaluation of $I^{1}_{w,\lambda^{\vee}}$:}\\
 \noindent To simplify the integrand, we define a map.
\begin{definition} 
Let $A$ be the quotient group as introduced in Subsection~\ref{subdec}. The function
$$Iw_{A}:G\longrightarrow A$$
is defined by setting the formula $Iw_{A}(g)=\pi^{\mu^{\vee}}$ for all $g\in U\pi^{\mu^{\vee}}K$ and $\mu^{\vee}\in \Lambda^{\vee}$.
\end{definition}
Following Kumar \cite[P. 77]{Kum}, for a simple root $\alpha$, we denote a subset by $U^{\alpha}$ of $U^{+}$ which is equal to $w_{\alpha}U^{+}w_{\alpha}\cap U^{+}$. This subset is normalized by the root subgroups $U_{\alpha}$ and $U_{-\alpha}$ and each element $u$ of $U^{+}$ can be written as $u=u_{\alpha}u^{\alpha}$ for some $u_{\alpha}\in U_{\alpha}$ and $u^{\alpha}\in U^{\alpha}$. Writing $u^{-}_{w}\in U^{-}_{w^{-1},\pi}$ as in Lemma~\ref{lem6.3}, we have
 \begin{eqnarray}
u^{-}_{w}\pi^{w\lambda^{\vee}}&=&u_{-\alpha}w_{\alpha}u^{-}_{w'}w_{\alpha}\pi^{w\lambda^{\vee}}\nonumber\\
&=&u_{-\alpha}w_{\alpha}u^{-}_{w'}\pi^{w'\lambda^{\vee}}w_{\alpha},\label{Indic1}
\end{eqnarray}
for some $u_{-\alpha}\in U_{-\alpha}$ and $u^{-}_{w'}\in U^{-}_{(w')^{-1},\pi}$. Next assume $u^{-}_{w'}\pi^{w'\lambda^{\vee}}w_{\alpha}=u\pi^{\mu^{\vee}}k$ be an Iwasawa decomposition, $u=x_{\alpha}u^{\alpha}$ for some $x_{\alpha}\in U_{\alpha}$ and $u^{\alpha}\in U^{\alpha}$, and let $n_{-\alpha}\in U_{-\alpha}$ be defined as $n_{-\alpha}=w_{\alpha}x_{\alpha}w_{\alpha}$. The right hand side of (\ref{Indic1}) becomes
\begin{eqnarray}
u_{-\alpha}w_{\alpha}u^{-}_{w'}\pi^{w'\lambda^{\vee}}w_{\alpha}&=& u_{-\alpha}w_{\alpha}u\pi^{\mu^{\vee}}kw_{\alpha}\nonumber\\
&=& u^{\alpha}_{1}\pi^{w_{\alpha}\mu^{\vee}}(\pi^{-w_{\alpha}\mu^{\vee}}u_{-\alpha}n_{-\alpha}\pi^{w_{\alpha}\mu^{\vee}})w_{\alpha}kw_{\alpha}.\label{Indic2}
\end{eqnarray}
Let $\tilde{n}_{-\alpha}=\pi^{-w_{\alpha}\mu^{\vee}}u_{-\alpha}n_{-\alpha}\pi^{w_{\alpha}\mu^{\vee}}.$ Summarizing, we have
\begin{lemma}
In the above notations,
$$Iw_{A}(u^{-}_{w})=Iw_{A}(u^{-}_{w'})^{w_{\alpha}}Iw_{A}(\tilde{n}_{-\alpha}).$$
\end{lemma}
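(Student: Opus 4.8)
The identity is obtained by reading off the Iwasawa projection from the chain of equalities (\ref{Indic1})--(\ref{Indic2}), using three elementary properties of $Iw_{A}$: (i) $Iw_{A}(gk)=Iw_{A}(g)$ for all $k\in K$; (ii) $Iw_{A}(u g)=Iw_{A}(g)$ for all $u\in U^{+}$; and (iii) $Iw_{A}(\pi^{\nu^{\vee}}g)=\pi^{\nu^{\vee}}\,Iw_{A}(g)$ for all $\nu^{\vee}\in\Lambda^{\vee}$. Properties (i) and (ii) are immediate from the definition of $Iw_{A}$ together with the uniqueness of the coweight in the Iwasawa decomposition $G=\cup_{\mu^{\vee}}U^{+}\pi^{\mu^{\vee}}K$, and (iii) holds because conjugation by a torus element $\pi^{\nu^{\vee}}$ rescales the parameter of each real root subgroup $U_{\beta}$ by $\pi^{\langle\beta,\nu^{\vee}\rangle}$ and hence preserves $U^{+}$.

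First I would start from (\ref{Indic2}): writing $u^{-}_{w'}\pi^{w'\lambda^{\vee}}w_{\alpha}=u\pi^{\mu^{\vee}}k$ for the Iwasawa decomposition used there, it reads $u^{-}_{w}\pi^{w\lambda^{\vee}}=u^{\alpha}_{1}\,\pi^{w_{\alpha}\mu^{\vee}}\,\tilde{n}_{-\alpha}\,(w_{\alpha}kw_{\alpha})$ with $u^{\alpha}_{1}\in U^{\alpha}\subset U^{+}$. Since $w_{\alpha}$ (a product of the elements $\chi_{\pm\alpha}(\pm 1)$, which stabilise $V_{\mathcal{O}}$) lies in $K$, the factor $w_{\alpha}kw_{\alpha}$ lies in $K$ and may be discarded by (i), so $Iw_{A}(u^{-}_{w}\pi^{w\lambda^{\vee}})=Iw_{A}(u^{\alpha}_{1}\pi^{w_{\alpha}\mu^{\vee}}\tilde{n}_{-\alpha})$. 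Next I would pick an Iwasawa decomposition $\tilde{n}_{-\alpha}=v^{+}\pi^{\xi^{\vee}}k'$, so $Iw_{A}(\tilde{n}_{-\alpha})=\pi^{\xi^{\vee}}$; pulling $v^{+}$ through $\pi^{w_{\alpha}\mu^{\vee}}$ keeps it in $U^{+}$ by (iii), giving $u^{\alpha}_{1}\pi^{w_{\alpha}\mu^{\vee}}\tilde{n}_{-\alpha}=\bigl(u^{\alpha}_{1}\,\pi^{w_{\alpha}\mu^{\vee}}v^{+}\pi^{-w_{\alpha}\mu^{\vee}}\bigr)\,\pi^{w_{\alpha}\mu^{\vee}+\xi^{\vee}}\,k'$ with the bracketed factor in $U^{+}$ and $k'\in K$. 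Applying (i) and (ii) then yields $Iw_{A}(u^{-}_{w}\pi^{w\lambda^{\vee}})=\pi^{w_{\alpha}\mu^{\vee}}\,Iw_{A}(\tilde{n}_{-\alpha})$. Finally, since $u^{-}_{w'}\pi^{w'\lambda^{\vee}}w_{\alpha}$ and $u^{-}_{w'}\pi^{w'\lambda^{\vee}}$ differ only by the right factor $w_{\alpha}\in K$, we have $\pi^{\mu^{\vee}}=Iw_{A}(u^{-}_{w'}\pi^{w'\lambda^{\vee}})$; applying the Weyl element $w_{\alpha}$ to $A$ gives $\pi^{w_{\alpha}\mu^{\vee}}=Iw_{A}(u^{-}_{w'}\pi^{w'\lambda^{\vee}})^{w_{\alpha}}$, which is the asserted identity (in the notation of the statement, where $Iw_{A}(u^{-}_{w})$ abbreviates $Iw_{A}(u^{-}_{w}\pi^{w\lambda^{\vee}})$ and likewise for $w'$).

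This is essentially bookkeeping: the only points requiring attention are keeping track of which of the small factors produced by Lemma~\ref{lem6.3} and by Kumar's factorisation $U^{+}=U_{\alpha}U^{\alpha}$ end up in $U^{+}$, in $U_{-\alpha}$, or in $K$ --- this is exactly what (\ref{Indic1})--(\ref{Indic2}) record --- and the observation that the Weyl representatives $w_{\alpha}$ themselves lie in $K$, which is what lets the stray factor $w_{\alpha}kw_{\alpha}$ be absorbed. I do not expect any genuine difficulty here.
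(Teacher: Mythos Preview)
Your proposal is correct and is precisely the argument the paper has in mind: the lemma is stated in the paper with the preface ``Summarizing, we have'' immediately after the chain (\ref{Indic1})--(\ref{Indic2}), with no further proof given, so what you have written simply makes explicit the Iwasawa bookkeeping that the paper leaves implicit. Your observation that the statement's $Iw_{A}(u^{-}_{w})$ should be read as $Iw_{A}(u^{-}_{w}\pi^{w\lambda^{\vee}})$ is also the intended reading.
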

\noindent So, the integral $I^{1}_{w,\lambda^{\vee}}$ takes the form

$I^{1}_{w,\lambda^{\vee}}=\int_{U_{-\alpha}(\mathcal{K})}\int_{U^{-}_{(w')^{-1},\pi}}\Phi_{\rho}(Iw_{A}(u^{-}_{w'})^{w_{\alpha}}Iw_{A}(\tilde{n}_{-\alpha}))d\tilde{n}_{-\alpha}du^{-}_{w'}$

$=\int_{U_{-\alpha}(\mathcal{K})}\Phi_{\rho}(Iw_{A}(\tilde{n}_{-\alpha}))d\tilde{n}_{-\alpha}\int_{U^{-}_{(w')^{-1},\pi}}\Phi_{\rho}(Iw_{A}(u^{-}_{w'})^{w_{\alpha}})du^{-}_{w'}.$

The integral defined with measure $d\tilde{n}_{-\alpha}$ can be related to the integral defined with measure $du_{-\alpha}$ by a change of variables contain a Jacobian factor $q^{-\langle \alpha, \mu^{\vee}\rangle}$. So, we obtain
\begin{eqnarray*}
I^{1}_{w,\lambda^{\vee}}&=&\int_{U_{-\alpha}(\mathcal{K})}\Phi_{\rho}(Iw_{A}(\tilde{n}_{-\alpha}))d\tilde{n}_{-\alpha}\int_{U^{-}_{(w')^{-1},\pi}}\Phi_{\rho}(Iw_{A}(u^{-}_{w'})^{w_{\alpha}})du^{-}_{w'}\\
&=& \int_{U_{-\alpha}(\mathcal{K})}q^{-\langle \alpha, \mu^{\vee}\rangle}\Phi_{\rho}(Iw_{A}(u_{-\alpha}))d{u_{-\alpha}}\int_{U^{-}_{(w')^{-1},\pi}}q^{\langle \alpha, \mu^{\vee}\rangle}(\Phi_{\rho}(Iw_{A}(u^{-}_{w'})))^{w_{\alpha}}du^{-}_{w'}\\
&=&\int_{U_{-\alpha}(\mathcal{K})}\Phi_{\rho}(Iw_{A}(u_{-\alpha}))d{u_{-\alpha}}\int_{U^{-}_{(w')^{-1},\pi}}(\Phi_{\rho}(Iw_{A}(u^{-}_{w'})))^{w_{\alpha}}du^{-}_{w'}
\end{eqnarray*}
where in the second integral the following fact is used
$$\Phi_{\rho}(\pi^{w_{\alpha}\mu^{\vee}})=q^{-\langle \rho, w_{\alpha}\mu^{\vee}\rangle}e^{w_{\alpha}\mu^{\vee}}=q^{-\langle \rho-\alpha, \mu^{\vee}\rangle}e^{w_{\alpha}\mu^{\vee}}=q^{\langle \alpha, \mu^{\vee}\rangle}\Phi_{\rho}(\pi^{\mu^{\vee}})^{w_{\alpha}}.$$ The rank $1$ computation for the first integral now implies,
\begin{eqnarray}
I^{1}_{w,\lambda^{\vee}}&=&{\mathbf c}[\alpha^{\vee}]\int_{U^{-}_{(w')^{-1},\pi}}(\Phi_{\rho}(Iw_{A}(u^{-}_{w'})))^{w_{\alpha}}du^{-}_{w'}\nonumber\\
&=&{\mathbf c}[\alpha^{\vee}](I_{w',\lambda^{\vee}})^{w_{\alpha}}.\label{Ipart1}
\end{eqnarray}
\noindent {\bf Step 3: Evaluation of $I^{2}_{w,\lambda^{\vee}}$:}\\
For $t\in\mathcal{K}$ and $val(t)\le 0$, we write the integrand of $I^{2}_{w,\lambda^{\vee}}$ as\\
$u_{-\alpha}(t)w_{\alpha}u^{-}_{w'}w_{\alpha}\pi^{w\lambda^{\vee}}$\\
$=\pi^{w\lambda^{\vee}}(\pi^{-w\lambda^{\vee}}u_{-\alpha}(t)\pi^{w\lambda^{\vee}})(\pi^{-w\lambda^{\vee}}w_{\alpha}u^{-}_{w'}w_{\alpha}\pi^{w\lambda^{\vee}})$.\\
Thus
\begin{eqnarray}
u_{-\alpha}(t)w_{\alpha}u^{-}_{w'}w_{\alpha}\pi^{w\lambda^{\vee}}&=&\pi^{w\lambda^{\vee}}u_{-\alpha}(\pi^{\langle -\alpha,-w_{\alpha}w'\lambda^{\vee}\rangle } t)(w_{\alpha}\pi^{-w'\lambda^{\vee}}u^{-}_{w'}\pi^{w'\lambda^{\vee}}w_{\alpha})\nonumber\\
&=&\pi^{w\lambda^{\vee}}u_{-\alpha}(\pi^{-\langle \alpha,w'\lambda^{\vee}\rangle } t)(w_{\alpha}\pi^{-w'\lambda^{\vee}}u^{-}_{w'}\pi^{w'\lambda^{\vee}}w_{\alpha}).\label{cal1}
\end{eqnarray}
Set $n_{w'\alpha}=-\langle \alpha,w'\lambda^{\vee}\rangle $. Since $\lambda^{\vee}$ is dominant $n_{w'\alpha}$ is a non positive integer. We define the subset $U^{-}_{w'}[\lambda]\subset U^{-}_{w',\pi}$ by setting $U^{-}_{w'}[\lambda]=\pi^{-w'\lambda}U^{-}_{(w')^{-1},\pi}\pi^{w'\lambda}.$
We use (\ref{cal1}) and the above notation to write,
\begin{align}
&I^{2}_{w,\lambda^{\vee}}=\jmath_{1}\jmath_{2}\int_{U_{-\alpha}[\le n_{w'\alpha}]}\int_{U^{-}_{w'}[\lambda]}\Phi_{\rho}(\pi^{w\lambda^{\vee}}u_{-\alpha}w_{\alpha}u^{-}_{w'}w_{\alpha})du_{-\alpha}du^{-}_{w'}\nonumber\\
&=\jmath_{1}\jmath_{2}q^{-\langle \rho, w\lambda^{\vee}\rangle  }e^{w\lambda^{\vee}}\int_{U_{-\alpha}[\le n_{w'\alpha}]}\int_{U^{-}_{w'}[\lambda]}\Phi_{\rho}(u_{-\alpha}w_{\alpha}u^{-}_{w'}w_{\alpha})du_{-\alpha}du^{-}_{w'},\label{secintsol}
\end{align}
where $\jmath_{1}:=\jmath(u_{-\alpha}\pi^{w\lambda^{\vee}})=q^{-\langle w'\lambda,\alpha^{\vee}\rangle}\;\;\text{and}\;\; \jmath_{2}:=\jmath(u^{-}_{w'}\pi^{w'\lambda^{\vee}})$
are the Jacobian factors. Suppose
\begin{eqnarray}\label{modi}
J^{2}_{w,\lambda^{\vee}}:=\int_{U_{-\alpha}[\le n_{w'\alpha}]}\int_{U^{-}_{w'}[\lambda]}\Phi_{\rho}(\pi^{w\lambda^{\vee}}u_{-\alpha}w_{\alpha}u^{-}_{w'}w_{\alpha})du_{-\alpha}du^{-}_{w'}.
\end{eqnarray}
The following lemma will be used to write $J^{2}_{w,\lambda^{\vee}}$ as a product of two integrals.
\begin{lemma}
Let $u_{-\alpha}(t)\in U_{-\alpha}[\le n_{w'\alpha}]$ and $u^{-}_{w'}\in U^{-}_{w'}[\lambda]$, then\\
$Iw_{A}(u_{-\alpha}(t)w_{\alpha}u^{-}_{w'}w_{\alpha})=Iw_{A}(u_{-\alpha}(t)w_{\alpha})Iw_{A}( u_{-\alpha}(t^{-1})u^{-}_{w'}u_{-\alpha}(t^{-1})^{-1}).$
\proof Let $\tilde{u}^{-}_{w'}:=(u_{-\alpha}(t^{-1})u^{-}_{w'}u_{-\alpha}(t^{-1})^{-1})$. We have 
\begin{eqnarray}
u_{-\alpha}(t)w_{\alpha}u^{-}_{w'}w_{\alpha}\nonumber&=&u_{\alpha}(t^{-1})\pi^{-val(t)\alpha^{\vee}}u_{-\alpha}(t^{-1})u^{-}_{w'}w_{\alpha}\nonumber\\
&=&u_{\alpha}(t^{-1})\pi^{-val(t)\alpha^{\vee}}\tilde{u}^{-}_{w'}u_{-\alpha}(t^{-1})w_{\alpha}.\label{Iw2a}
\end{eqnarray}
Let $\tilde{u}^{-}_{w'}=u_{-\alpha}(t^{-1})u^{-}_{w'}u_{-\alpha}(t^{-1})^{-1}=u'\pi^{\nu^{\vee}}k',$ for some $u'\in U$ and $k'\in K$. Using this Iwasawa decomposition in (\ref{Iw2a}), we get
\begin{eqnarray}
u_{-\alpha}(t)w_{\alpha}u^{-}_{w'}w_{\alpha}&=&u_{\alpha}(t^{-1})\pi^{-val(t)\alpha^{\vee}}(u^{'}\pi^{\nu^{\vee}}k^{'})u_{-\alpha}(t^{-1})w_{\alpha}\nonumber\\
&=&u^{''}\pi^{\nu^{\vee}-val(t)\alpha^{\vee}}k^{'}u_{-\alpha}(t^{-1})w_{\alpha},\label{Iw2b}
\end{eqnarray}
for some $u^{''}\in U^{+}$. Thus 
$$u_{-\alpha}(t)w_{\alpha}u^{-}_{w'}w_{\alpha}\in U\pi^{\nu^{\vee}-val(t)\alpha^{\vee}}K$$
 and the assertion follows.
\end{lemma}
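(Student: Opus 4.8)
The plan is to reduce the identity to the rank-one Chevalley relation $u_{-\alpha}(s)=u_{\alpha}(s^{-1})\pi^{-val(s)\alpha^{\vee}}w_{\alpha}u_{\alpha}(s^{-1})$ already exploited in the rank $1$ computation, together with the elementary facts that $Iw_{A}$ depends only on the (uniquely determined) $\Lambda^{\vee}$-component of an Iwasawa decomposition and that conjugation by $\pi^{\mu^{\vee}}$ preserves $U^{+}$. The single numerical input that drives the argument is the hypothesis $u_{-\alpha}(t)\in U_{-\alpha}[\le n_{w'\alpha}]$: since $n_{w'\alpha}=-\langle\alpha,w'\lambda^{\vee}\rangle\le 0$, we have $val(t)\le 0$, hence $t^{-1}\in\mathcal{O}$ and $u_{-\alpha}(t^{-1})\in U_{-\alpha}(\mathcal{O})\subset K$.

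First I would apply the rank-one relation to $u_{-\alpha}(t)$ and insert the conjugate $\tilde{u}^{-}_{w'}:=u_{-\alpha}(t^{-1})u^{-}_{w'}u_{-\alpha}(t^{-1})^{-1}$, which gives
\[
u_{-\alpha}(t)w_{\alpha}u^{-}_{w'}w_{\alpha}=u_{\alpha}(t^{-1})\pi^{-val(t)\alpha^{\vee}}u_{-\alpha}(t^{-1})u^{-}_{w'}w_{\alpha}=u_{\alpha}(t^{-1})\pi^{-val(t)\alpha^{\vee}}\,\tilde{u}^{-}_{w'}\,u_{-\alpha}(t^{-1})w_{\alpha}
\]
(the torus and sign factors produced by the relation are harmless, being absorbed into $K$ on the right or into torus-conjugates of unipotent elements). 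Taking $u^{-}_{w'}=1$ in the same computation, and using that $u_{-\alpha}(t^{-1})\in K$, shows $u_{-\alpha}(t)w_{\alpha}\in U^{+}\pi^{-val(t)\alpha^{\vee}}K$, so $Iw_{A}(u_{-\alpha}(t)w_{\alpha})=\pi^{-val(t)\alpha^{\vee}}$.

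Next I would choose an Iwasawa decomposition $\tilde{u}^{-}_{w'}=u'\pi^{\nu^{\vee}}k'$ with $u'\in U^{+}$ and $k'\in K$, so that $Iw_{A}(\tilde{u}^{-}_{w'})=\pi^{\nu^{\vee}}$ by definition. Substituting into the display above and commuting $\pi^{-val(t)\alpha^{\vee}}$ to the right past $u'$ (which stays in $U^{+}$) yields
\[
u_{-\alpha}(t)w_{\alpha}u^{-}_{w'}w_{\alpha}=\bigl(u_{\alpha}(t^{-1})u''\bigr)\,\pi^{\nu^{\vee}-val(t)\alpha^{\vee}}\,\bigl(k'\,u_{-\alpha}(t^{-1})\,w_{\alpha}\bigr)
\]
for a suitable $u''\in U^{+}$, where the left-hand factor lies in $U^{+}$ and the right-hand factor lies in $K$ (again using $t^{-1}\in\mathcal{O}$ and $w_{\alpha}\in K$). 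Hence $Iw_{A}(u_{-\alpha}(t)w_{\alpha}u^{-}_{w'}w_{\alpha})=\pi^{\nu^{\vee}-val(t)\alpha^{\vee}}=Iw_{A}(u_{-\alpha}(t)w_{\alpha})\,Iw_{A}(\tilde{u}^{-}_{w'})$, which is the assertion.

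I expect the only genuinely delicate point to be the bookkeeping of which subgroup each residual factor lands in; there is no analytic content, and beyond the rank-one relation the argument uses only that $\pi^{\mu^{\vee}}$ normalizes $U^{+}$ and the sign hypothesis $val(t)\le 0$ --- which is exactly the reason the integrand of $I^{2}_{w,\lambda^{\vee}}$ is restricted to $U_{-\alpha}[\le n_{w'\alpha}]$. The lemma is purely group-theoretic and serves only to factor $J^{2}_{w,\lambda^{\vee}}$, and hence $I^{2}_{w,\lambda^{\vee}}$, as a product of two integrals in the higher-rank proof of Proposition~\ref{proprkgt1}.
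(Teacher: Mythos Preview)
Your proof is correct and follows essentially the same route as the paper's: apply the rank-one relation to rewrite $u_{-\alpha}(t)w_{\alpha}$, insert the conjugate $\tilde{u}^{-}_{w'}$, substitute its Iwasawa decomposition $u'\pi^{\nu^{\vee}}k'$, and commute the torus factor past $u'\in U^{+}$ to read off the $\Lambda^{\vee}$-component. If anything you are more explicit than the paper in isolating the key point that $val(t)\le n_{w'\alpha}\le 0$ forces $u_{-\alpha}(t^{-1})\in U_{-\alpha}(\mathcal{O})\subset K$, which is exactly what makes the residual right-hand factor $k'u_{-\alpha}(t^{-1})w_{\alpha}$ lie in $K$; the paper uses this silently.
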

\noindent By following the above lemma integral $J^{2}_{w,\lambda^{\vee}}$ can be split,
\begin{eqnarray*}
J^{2}_{w,\lambda^{\vee}}=\int_{U_{-\alpha}[\le n_{w'\alpha}]}\Phi_{\rho}(u_{-\alpha}w_{\alpha})du_{-\alpha}\int_{U^{-}_{w'}[\lambda]}\Phi_{\rho}(u_{-\alpha}(t^{-1})u^{-}_{w'}u_{-\alpha}(-t^{-1}))du^{-}_{w'}.
\end{eqnarray*}
For $t\in \mathcal {K}$ with $val(t)=n_{w'\alpha}$, $u_{-\alpha}(t^{-1})\in U_{-\alpha}(\mathcal{O})$
therefore $U^{-}_{w'}[\lambda]$ and $u_{-\alpha}(t^{-1})U^{-}_{w'}[\lambda]u_{-\alpha}(-t^{-1})$ have the same measure and we can write
\begin{eqnarray*}
J^{2}_{w,\lambda^{\vee}}&=&\int_{U_{-\alpha}[\le n_{w'\alpha}]}\Phi_{\rho}(u_{-\alpha}w_{\alpha})du_{-\alpha}\int_{U^{-}_{w'}[\lambda]}\Phi_{\rho}(u^{-}_{w'})du^{-}_{w'}.\end{eqnarray*}
Since $\jmath_{1}=\jmath(u_{-\alpha}\pi^{w\lambda^{\vee}})=q^{-\langle w'\lambda,\alpha^{\vee}\rangle}$ and $n_{w'\alpha}=\langle \alpha, w'\lambda^{\vee}\rangle$, therefore\\
$\jmath_{1} q^{-\langle \rho, w\lambda^{\vee}\rangle  }e^{w\lambda^{\vee}}=\jmath_{1}q^{-\langle \rho, w_{\alpha}w'\lambda^{\vee}\rangle  }e^{w_{\alpha}w'\lambda^{\vee}}$
$=\jmath_{1}q^{-\langle \rho-\alpha, w'\lambda^{\vee}\rangle  }e^{w'\lambda^{\vee}-\langle \alpha, w'\lambda^{\vee}\rangle\alpha^{\vee} }$\\
$=\jmath_{1}q^{-\langle -\alpha, w'\lambda^{\vee}\rangle  }q^{-\langle \rho, w'\lambda^{\vee}\rangle  }e^{w'\lambda^{\vee}}e^{-\langle \alpha, w'\lambda^{\vee}\rangle\alpha^{\vee} }$
$=q^{-\langle \rho, w'\lambda^{\vee}\rangle  }e^{w'\lambda^{\vee}}e^{-n_{w'\alpha}\alpha^{\vee}}.$\\
Also,

$e^{-n_{w'\alpha}\alpha^{\vee}}\int_{U_{-\alpha}[\le n_{w'\alpha}]}\Phi_{\rho}(u_{-\alpha}w_{\alpha})du_{-\alpha}$\\
$=e^{-n_{w'\alpha}}[(1-q^{-1})e^{n_{w'\alpha}\alpha^{\vee}}+(1-q^{-1})e^{(n_{w'\alpha}+1)\alpha^{\vee}}+ \dots ]$\\
$=[(1-q^{-1})+(1-q^{-1})e^{\alpha^{\vee}}+ \dots ]$
$={\mathbf b}[\alpha^{\vee}].$

By putting these pieces back in (\ref{secintsol}), we obtain
\begin{eqnarray}
I^{1}_{w,\lambda^{\vee}}&=& \jmath_{2}{\bf b}[\alpha^{\vee}]\int_{U^{-}_{w'}[\lambda]}q^{-\langle \rho, w'\lambda^{\vee}\rangle  }e^{w'\lambda^{\vee}}\Phi_{\rho}(u^{-}_{w'}w_{\alpha})du^{-}_{w'}\nonumber\\
&=&\jmath_{2} {\mathbf b}[\alpha^{\vee}]\int_{U^{-}_{w'}}\Phi_{\rho}(\pi^{w'\lambda^{\vee}}u^{-}_{w'})du^{-}_{w'}\nonumber\\
&=&{\bf b}[\alpha^{\vee}]I_{w',\lambda^{\vee}}.\label{2ab}
 \end{eqnarray}
The solutions (\ref{Ipart1}) and (\ref{2ab}) imply that
\begin{eqnarray}
I_{w,\lambda^{\vee}} &=&{\mathbf T}_{w_\alpha}(I_{w',\lambda^{\vee}}).
\end{eqnarray}

\section{Finiteness of Fiber}\label{sectionfiberfin}
In this section, we fix $w\in\Omega$, $\mu^{\vee}\in\Lambda$ and $\lambda\in\Lambda_{+}$ regular, where $\Omega\subset W$ is the finite set obtained in Proposition~\ref{Wfin}. We start with the following terminology which will be used later in this section.

\begin{definition}
Let $f=\sum_{\mu^{\vee}\in \Lambda^{\vee}}c_{\mu^{\vee}}e^{\mu^{\vee}}$ be a formal sum, we write 
\begin{eqnarray}
[e^{\xi^{\vee}}]f:=c_{\xi^{\vee}}.
\end{eqnarray}
\end{definition}

\noindent Let $Z:=\{\mu^{\vee}\in \Lambda^{\vee}\mid m_{w,\lambda^{\vee}}^{-1}(\pi^{\mu^{\vee}})\ne \emptyset\}.$
\begin{lemma}\label{Convspf1}
For $w\in W$ and $\lambda^{\vee}\in \Lambda^{\vee}_{+}$ regular, $Z\subset supp(I_{w,\lambda^{\vee}})$, where
$supp(I_{w,\lambda^{\vee}})=\{\mu^{\vee}\in\Lambda^{\vee}\mid [e^{\mu^{\vee}}]I_{w,\lambda^{\vee}}\ne 0\}$.
\end{lemma}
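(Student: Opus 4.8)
The plan is to reinterpret each coefficient of $I_{w,\lambda^{\vee}}$ as the volume of a level set inside $U^{-}_{w^{-1},\pi}$, to note that such a level set has positive measure as soon as it is non-empty, and then to show that $\mu^{\vee}\in Z$ forces the relevant level set to be non-empty. Concretely, for $\mu^{\vee}\in\Lambda^{\vee}$ set $L_{\mu^{\vee}}:=\{u^{-}_{w}\in U^{-}_{w^{-1},\pi}\mid u^{-}_{w}\pi^{w\lambda^{\vee}}\in U^{+}\pi^{\mu^{\vee}}K\}$. By the definition of $\Phi_{\rho}$ one has $\Phi_{\rho}(u^{-}_{w}\pi^{w\lambda^{\vee}})=q^{-\langle\rho,\mu^{\vee}\rangle}e^{\mu^{\vee}}$ exactly when $u^{-}_{w}\in L_{\mu^{\vee}}$, so integration over $U^{-}_{w^{-1},\pi}$ gives $[e^{\mu^{\vee}}]I_{w,\lambda^{\vee}}=q^{-\langle\rho,\mu^{\vee}\rangle}\cdot\operatorname{vol}(L_{\mu^{\vee}})$, the volume being taken with respect to the measure $du^{-}_{w}$. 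Hence $\operatorname{supp}(I_{w,\lambda^{\vee}})=\{\mu^{\vee}\mid\operatorname{vol}(L_{\mu^{\vee}})>0\}$, and it suffices to prove that $\mu^{\vee}\in Z$ implies $\operatorname{vol}(L_{\mu^{\vee}})>0$.

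First I would show that each $L_{\mu^{\vee}}$ is open in $U^{-}_{w^{-1},\pi}$, so that $L_{\mu^{\vee}}\ne\emptyset$ already forces positive volume. Given $u^{-}_{w}\in L_{\mu^{\vee}}$, write $u^{-}_{w}\pi^{w\lambda^{\vee}}=u^{+}\pi^{\mu^{\vee}}k$ with $u^{+}\in U^{+}$, $k\in K$, and for $\delta$ in a congruence subgroup of $U^{-}_{w^{-1}}$ use $u^{-}_{w}\delta\,\pi^{w\lambda^{\vee}}=(u^{-}_{w}\pi^{w\lambda^{\vee}})(\pi^{-w\lambda^{\vee}}\delta\,\pi^{w\lambda^{\vee}})$. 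For every $\beta\in S^{-}_{w^{-1}}$ one has $\langle\beta,w\lambda^{\vee}\rangle=\langle w^{-1}\beta,\lambda^{\vee}\rangle\ge0$ because $w^{-1}\beta\in\Delta_{+}$ and $\lambda^{\vee}$ is dominant; hence conjugation by $\pi^{-w\lambda^{\vee}}$ lowers the root valuations of $\delta$ by at most $\max_{\beta\in S^{-}_{w^{-1}}}\langle w^{-1}\beta,\lambda^{\vee}\rangle<\infty$, and since $U^{-}_{w^{-1}}$ is finite dimensional, $\pi^{-w\lambda^{\vee}}\delta\,\pi^{w\lambda^{\vee}}\in U^{-}_{w^{-1},\pi}\subset K$ once $\delta$ is deep enough. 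Then $u^{-}_{w}\delta\,\pi^{w\lambda^{\vee}}\in U^{+}\pi^{\mu^{\vee}}K$, i.e.\ $u^{-}_{w}\delta\in L_{\mu^{\vee}}$.

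Next, to see that $\mu^{\vee}\in Z$ forces $L_{\mu^{\vee}}\ne\emptyset$, fix a lift $\dot w\in N$ of $w$ (so $\dot w\in K$ and $\dot w\pi^{\lambda^{\vee}}\dot w^{-1}=\pi^{w\lambda^{\vee}}$). Unwinding (\ref{fibersa1}) and using left $I^{-}$-invariance, $\mu^{\vee}\in Z$ is equivalent to: there is $j\in I^{-}$ with $\dot w j\,\pi^{\lambda^{\vee}}\in U^{+}\pi^{\mu^{\vee}}K$, equivalently $(\dot w j\dot w^{-1})\pi^{w\lambda^{\vee}}\in U^{+}\pi^{\mu^{\vee}}K$. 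The conjugate $\dot w I^{-}\dot w^{-1}$ is generated by $H_{\mathcal O}$ together with the root groups $U_{\gamma,\mathcal O}$ for $\gamma\in w\Delta_{-}$ and $U_{\gamma,\pi}$ for $\gamma\in w\Delta_{+}$; using the partitions $\Delta_{+}=S^{+}_{w^{-1}}\sqcup(\Delta_{+}\cap w\Delta_{+})$ and $\Delta_{-}=(\Delta_{-}\cap w\Delta_{-})\sqcup S^{-}_{w^{-1}}$, I would write $\dot w j\dot w^{-1}=p\cdot u^{-}_{w}\cdot n$ with $p\in U^{+}H_{\mathcal O}$, $u^{-}_{w}\in U^{-}_{w^{-1},\pi}$, and $n$ supported on $\Delta_{-}\cap w\Delta_{-}$ with $\mathcal O$-integral coordinates. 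Since $U^{+}\pi^{\mu^{\vee}}K$ is left $U^{+}H_{\mathcal O}$-invariant, the factor $p$ may be dropped; and since each $\gamma\in\Delta_{-}\cap w\Delta_{-}$ satisfies $\langle\gamma,w\lambda^{\vee}\rangle=\langle w^{-1}\gamma,\lambda^{\vee}\rangle\le0$ (as $w^{-1}\gamma\in\Delta_{-}$ and $\lambda^{\vee}$ is dominant), conjugation by $\pi^{-w\lambda^{\vee}}$ only raises valuations on $n$, so $\pi^{-w\lambda^{\vee}}n\,\pi^{w\lambda^{\vee}}\in U^{-}_{\mathcal O}\subset K$. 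Therefore $u^{-}_{w}\pi^{w\lambda^{\vee}}(\pi^{-w\lambda^{\vee}}n\pi^{w\lambda^{\vee}})\in U^{+}\pi^{\mu^{\vee}}K$ forces $u^{-}_{w}\pi^{w\lambda^{\vee}}\in U^{+}\pi^{\mu^{\vee}}K$, i.e.\ $u^{-}_{w}\in L_{\mu^{\vee}}$. Combined with the first two steps, this gives $Z\subseteq\operatorname{supp}(I_{w,\lambda^{\vee}})$.

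The hard part will be the decomposition $\dot w j\dot w^{-1}=p\cdot u^{-}_{w}\cdot n$: one must set up structure-theoretic product decompositions of $\dot w I^{-}\dot w^{-1}$ adapted to $w$ that isolate the surviving $U^{-}_{w^{-1},\pi}$-factor from the positive and torus factors absorbed on the left and from the negative $\mathcal O$-factor supported on $\Delta_{-}\cap w\Delta_{-}$, keeping careful track of the mixed $\mathcal O/\pi$ integrality of the root coordinates and of the fact that it is exactly this $\mathcal O$-factor --- and not the $U^{-}_{w^{-1},\pi}$-part --- whose roots are contracted rather than expanded by conjugation through $\pi^{w\lambda^{\vee}}$. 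This is the Kac--Moody counterpart of the affine computations underlying \cite[Section~4.4]{MP2} and \cite[Lemma~7.3.3]{BKP}, and relies on the finite dimensionality of the groups $U^{\pm}_{w}$ together with their filtrations by congruence subgroups.
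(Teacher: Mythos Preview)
Your proposal is correct and follows the same overall strategy as the paper: identify $[e^{\mu^{\vee}}]I_{w,\lambda^{\vee}}$ with the measure of the level set $L_{\mu^{\vee}}$ and then show that $\mu^{\vee}\in Z$ forces $L_{\mu^{\vee}}\ne\emptyset$. You are in fact more careful than the paper about the openness/positive-volume step, which the paper leaves implicit.

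The difference lies in how you obtain $L_{\mu^{\vee}}\ne\emptyset$. You conjugate $I^{-}$ by $\dot w$ first and then seek a factorization $\dot w j\dot w^{-1}=p\cdot u^{-}_{w}\cdot n$ adapted to the partition of $\Delta$ by $w$, tracking the mixed $\mathcal O/\pi$ integrality of each piece; this is the step you rightly flag as the ``hard part''. The paper's route is shorter because it postpones the conjugation: it applies the Iwahori--Matsumoto decomposition $I^{-}=U^{+}_{\pi}U^{-}_{\mathcal O}H_{\mathcal O}$ directly, and since $\lambda^{\vee}$ is dominant one has $\pi^{-\lambda^{\vee}}U^{-}_{\mathcal O}\pi^{\lambda^{\vee}}\subset K$, so the \emph{entire} $U^{-}_{\mathcal O}$-factor is absorbed into $K$ on the right in one move. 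What remains is $wU^{+}_{\pi}\pi^{\lambda^{\vee}}K\cap U^{+}\pi^{\mu^{\vee}}K\ne\emptyset$, and now $wU^{+}_{\pi}w^{-1}$ splits as a $U^{+}$-part (cancelled against the left $U^{+}$) times exactly $U^{-}_{w^{-1},\pi}$. Your $\Delta_{-}\cap w\Delta_{-}$ factor $n$ and its integrality bookkeeping thus never arise, because that piece has already been swallowed by $K$ before $w$ enters the picture; the ``hard part'' simply evaporates under this ordering.
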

\begin{proof}\let\qed\relax
Let $\mu^{\vee}$ is such that $m_{w,\lambda^{\vee}}^{-1}(\pi^{\mu^{\vee}})\ne \emptyset$, then
\begin{eqnarray*}
 wI^{-}\pi^{\lambda^{\vee}}K\cap U\pi^{\mu^{\vee}} K&\ne& \emptyset
  \end{eqnarray*}
  which implies
  \begin{eqnarray*}
 wU^{+}_{\pi}U^{-}_{\mathcal{O}}\pi^{\lambda^{\vee}}K\cap U\pi^{\mu^{\vee}} K&\ne&\emptyset.  
 \end{eqnarray*}
 Since $\lambda^{\vee}$ is dominant and regular, $\pi^{-\lambda^{\vee}}U^{-}_{\mathcal{O}}\pi^{\lambda^{\vee}}\subset K$ and this gives 
\begin{eqnarray*}
 U^{-}_{w^{-1}\pi}\pi^{w\lambda^{\vee}}\cap U\pi^{\mu^{\vee}} K\ne\emptyset .
\end{eqnarray*}
and thus $\mu^{\vee}\in supp(I_{w,\lambda^{\vee}})$.
\end{proof}

\subsection{Quotient Space and Surjection}
We equip the group $U^{+}_{w,\pi}$ with the following relation
\begin{definition}
Let $u_{w},z_{w}\in U^{+}_{w,\pi}$. We say $u_{w}\sim z_{w}$ if and only if 
\begin{eqnarray}
u_{w}= z_{w}\pi^{\lambda^{\vee}}U^{+}_{w,\pi}\pi^{-\lambda^{\vee}}.
\end{eqnarray}
\end{definition}
\noindent It can be easily verified that $\sim$ is an equivalence relation. For $u_{w}\in U^{+}_{w,\pi}$,  $[u_{w}]$ denotes the equivalence class of $u_{w}$ with respect to the relation $\sim$.
Next, set
\begin{eqnarray}
U^{+}_{w,\pi}(\mu^{\vee}):=\{u_{w}\in U^{+}_{w,\pi}\mid wu_{w}\pi^{\lambda^{\vee}}\in U\pi^{\mu^{\vee}}K\},
\end{eqnarray}
 and 
 \begin{eqnarray}
\mathcal{X}^{+}_{w,\pi}(\mu^{\vee}):=\{[u_{w}]\mid u_{w}\in U^{+}_{w,\pi}(\mu^{\vee})\}.
\end{eqnarray}
 If $[u_{w}]\in \mathcal{X}^{+}_{w,\pi}(\mu^{\vee})$ with $u_{w}\in U^{+}_{w,\pi}(\mu^{\vee})$, the relation 
\begin{eqnarray}
wu_{w}\pi^{\lambda^{\vee}}=u\pi^{\mu^{\vee}}k,
\end{eqnarray}
implies
\begin{eqnarray}
u^{-1}wu_{w}\pi^{\lambda^{\vee}}k^{-1}=\pi^{\mu^{\vee}},
\end{eqnarray}
for some $k\in K$ and $u\in U^{+}$. Thus $[u_{w}]\in \mathcal{X}^{+}_{w,\pi}(\mu^{\vee})$ gives rise to an element in $m_{w,\lambda^{\vee}}^{-1}(\mu^{\vee})$. 
\begin{definition}
Let 
\begin{eqnarray}
\phi\colon  \mathcal{X}^{+}_{w,\pi}(\mu^{\vee})\longrightarrow m_{w,\lambda^{\vee}}^{-1}(\mu^{\vee})
\end{eqnarray}
be a map defined as $\phi([u_{w}])\colon = (u^{-1}wu_{w}, \pi^{\lambda^{\vee}}k^{-1})$.
\end{definition}

\begin{lemma}\label{fibfinxlemma2}
The function $\phi$ is well defined and onto.
\proof To show that $\phi$ is a well defined, let $u_{w},z_{w}\in U^{+}_{w,\pi}(\mu^{\vee})$ and $u_{w}\sim z_{w}$ then there exists $u^{+}\in U^{+}_{w,\pi}$ such that 
\begin{eqnarray}
u_{w}= z_{w}\pi^{\lambda^{\vee}}u^{+}\pi^{-\lambda^{\vee}}.
\end{eqnarray}

\noindent Also, for some $k\in K$ and $u_{1}\in U^{+}$,
\begin{eqnarray}
wu_{w}\pi^{\lambda^{\vee}}=u_{1}\pi^{\mu^{\vee}}k,
\end{eqnarray}
which implies 
\begin{eqnarray}
\pi^{\mu^{\vee}}&=&u_{1}^{-1}wu_{w}\pi^{\lambda^{\vee}}k^{-1},\nonumber\\
&=&u_{1}^{-1}wu_{w}(\pi^{\lambda^{\vee}}u^{+}\pi^{-\lambda^{\vee}})^{-1}(\pi^{\lambda^{\vee}}u^{+}\pi^{-\lambda^{\vee}})\pi^{\lambda^{\vee}}k^{-1}\nonumber\\
&=&u_{1}^{-1}wz_{w}\pi^{\lambda^{\vee}}u^{+}k^{-1}\nonumber\\
&=&u_{1}^{-1}wz_{w}\pi^{\lambda^{\vee}}k',
\end{eqnarray}
where $u^{+}k^{-1}=k'\in K$. Thus, by taking $(\pi^{\lambda^{\vee}}u^{+}\pi^{-\lambda^{\vee}})^{-1}=i^{-}\in I^{-}$
\begin{eqnarray}
(u_{1}^{-1}wz_{w}, \pi^{\lambda^{\vee}}k')=(u_{1}^{-1}wu_{w}i^{-}, (i^{-})^{-1}\pi^{\lambda^{\vee}}k),
\end{eqnarray}
and hence 
\begin{eqnarray}
\phi([u_{w}])=\phi([z_{w}]).
\end{eqnarray}
Next, we show that $\phi$ is onto. Let $(x,y)\in m_{w,\lambda^{\vee}}^{-1}(\mu^{\vee})$ with $x=uwi^{-}_{1}$ and $y=i^{-}_{2}\pi^{\lambda^{\vee}}k$ for some $u\in U^{+}$, $k\in K$ and $i^{-}_{1},i^{-}_{2}\in I^{-}$. Then there exists $i^{-}\in I^{-}$ such that 
\begin{eqnarray}\label{fberfinx1}
uwi^{-}\pi^{\lambda^{\vee}}k=\pi^{\mu^{\vee}}.
\end{eqnarray}
Suppose $i^{-}$ has the following decomposition
\begin{eqnarray}
 i^{-}=u^{+}_{\pi}u^{-}_{\mathcal{O}}h_{\mathcal{O}}
 \end{eqnarray} 
for some $u^{+}_{\pi}\in U^{+}_{\pi}$, $u^{-}_{\mathcal{O}}\in U^{-}_{\mathcal{O}}$ and $h_{\mathcal{O}}\in H_{\mathcal{O}}$. Using it into (\ref{fberfinx1}), we have
\begin{eqnarray}\label{fberfin2}
\pi^{\mu^{\vee}}&=&uwu^{+}_{\pi}u^{-}_{\mathcal{O}}h_{\mathcal{O}}\pi^{\lambda^{\vee}}k\nonumber\\
&=&uwu_{w,\pi}\pi^{\lambda^{\vee}}k''
\end{eqnarray}
for some $u_{w,\pi}\in U^{+}_{w,\pi}$ and $k''=\pi^{-\lambda^{\vee}}u^{-}_{\mathcal{O}}h_{\mathcal{O}}\pi^{\lambda^{\vee}}k\in K$. So, we get an element $u_{w,\pi}\in U^{+}_{w,\pi}(\mu^{\vee})$ such that 
\begin{eqnarray}\label{fberfin3}
\phi([u_{w}])=(x,y).
\end{eqnarray}
This completes the proof.
\end{lemma}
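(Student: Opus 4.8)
The plan is to establish the two halves---well-definedness and surjectivity---separately, in each case by direct bookkeeping with the Iwasawa and Iwahori--Matsumoto decompositions, using throughout that $\lambda^{\vee}$ is dominant, so that $\pi^{\lambda^{\vee}}U^{+}_{\pi}\pi^{-\lambda^{\vee}}\subseteq U^{+}_{\pi}\subseteq I^{-}$ and $\pi^{-\lambda^{\vee}}U^{-}_{\mathcal{O}}\pi^{\lambda^{\vee}}\subseteq U^{-}_{\mathcal{O}}\subseteq K$ (conjugation by $\pi^{\lambda^{\vee}}$ only raises the $\pi$-divisibility of root coordinates on $U^{+}$, and keeps $U^{-}_{\mathcal{O}}$ integral). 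I will also use the standard factorization $U^{+}=(U^{+}\cap w^{-1}U^{+}w)\cdot U^{+}_{w}$ and its compatibility with the $\pi$-congruence filtration, so $U^{+}_{\pi}=(U^{+}\cap w^{-1}U^{+}w)_{\pi}\cdot U^{+}_{w,\pi}$.

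\textbf{Well-definedness.} Suppose $u_{w}\sim z_{w}$ with $u_{w},z_{w}\in U^{+}_{w,\pi}(\mu^{\vee})$, so $u_{w}=z_{w}c$ with $c:=\pi^{\lambda^{\vee}}u^{+}\pi^{-\lambda^{\vee}}$ for some $u^{+}\in U^{+}_{w,\pi}$; then $c\in I^{-}$ by the first remark. Fixing an Iwasawa decomposition $wz_{w}\pi^{\lambda^{\vee}}=u_{1}\pi^{\mu^{\vee}}k_{1}$, one computes $wu_{w}\pi^{\lambda^{\vee}}=wz_{w}\pi^{\lambda^{\vee}}u^{+}=u_{1}\pi^{\mu^{\vee}}(k_{1}u^{+})$ with $k_{1}u^{+}\in K$, so the $U^{+}$-part $u_{1}$ and the coweight $\mu^{\vee}$ are common to $u_{w}$ and $z_{w}$ (incidentally showing $z_{w}\in U^{+}_{w,\pi}(\mu^{\vee})$, which is needed for $\mathcal{X}^{+}_{w,\pi}(\mu^{\vee})$ to be sensible). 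Hence $\phi([u_{w}])=(u_{1}^{-1}wu_{w},\pi^{\lambda^{\vee}}(k_{1}u^{+})^{-1})$ and $\phi([z_{w}])=(u_{1}^{-1}wz_{w},\pi^{\lambda^{\vee}}k_{1}^{-1})$; these represent the same class in $U^{+}wI^{-}\times_{I^{-}}I^{-}\pi^{\lambda^{\vee}}K$, as witnessed by $r:=c^{-1}\in I^{-}$, since $u_{1}^{-1}wz_{w}=(u_{1}^{-1}wu_{w})r$ and $r^{-1}\pi^{\lambda^{\vee}}(k_{1}u^{+})^{-1}=c\,\pi^{\lambda^{\vee}}(u^{+})^{-1}k_{1}^{-1}=\pi^{\lambda^{\vee}}k_{1}^{-1}$. (I would also check that the resulting class does not depend on the chosen Iwasawa decomposition of $wu_{w}\pi^{\lambda^{\vee}}$; this is routine given the $\times_{I^{-}}$-identification.)

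\textbf{Surjectivity.} Given a class in $m_{w,\lambda^{\vee}}^{-1}(\pi^{\mu^{\vee}})$, represent it by a pair $(x,y)$ with $xy=\pi^{\mu^{\vee}}$ and, absorbing $I^{-}$-factors via the $\times_{I^{-}}$-relation, assume $x=uwi^{-}$ and $y=\pi^{\lambda^{\vee}}k$ with $u\in U^{+}$, $i^{-}\in I^{-}$, $k\in K$. Using the Iwahori--Matsumoto decomposition $i^{-}=u^{+}_{\pi}u^{-}_{\mathcal{O}}h_{\mathcal{O}}$ (with $u^{+}_{\pi}\in U^{+}_{\pi}$, $u^{-}_{\mathcal{O}}\in U^{-}_{\mathcal{O}}$, $h_{\mathcal{O}}\in H_{\mathcal{O}}$) and then $u^{-}_{\mathcal{O}}h_{\mathcal{O}}\pi^{\lambda^{\vee}}=\pi^{\lambda^{\vee}}(\pi^{-\lambda^{\vee}}u^{-}_{\mathcal{O}}\pi^{\lambda^{\vee}})h_{\mathcal{O}}$ with middle factor in $K$, the class is represented by $(uwu^{+}_{\pi},\pi^{\lambda^{\vee}}k'')$ for some $k''\in K$, still with product $\pi^{\mu^{\vee}}$. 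Writing $u^{+}_{\pi}=b\,u_{w,\pi}$ with $b\in(U^{+}\cap w^{-1}U^{+}w)_{\pi}$ and $u_{w,\pi}\in U^{+}_{w,\pi}$, and using $wbw^{-1}\in U^{+}$, I obtain $uwu^{+}_{\pi}=u(wbw^{-1})\,w\,u_{w,\pi}=:u'wu_{w,\pi}$ with $u'\in U^{+}$. Then $wu_{w,\pi}\pi^{\lambda^{\vee}}=(u')^{-1}\pi^{\mu^{\vee}}(k'')^{-1}\in U^{+}\pi^{\mu^{\vee}}K$, so $u_{w,\pi}\in U^{+}_{w,\pi}(\mu^{\vee})$; and reading $\phi$ off this Iwasawa decomposition gives $\phi([u_{w,\pi}])=(u'wu_{w,\pi},\pi^{\lambda^{\vee}}k'')$, the very representative we began with. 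Hence $\phi$ is onto.

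\textbf{Expected main difficulty.} The substance is entirely in the bookkeeping with the (possibly infinite-rank) unipotent groups: each step commutes a root-subgroup factor past $w$ or past $\pi^{\pm\lambda^{\vee}}$, and one must route it to the side on which it stays in the intended subgroup---conjugating $U^{+}$ by $\pi^{+\lambda^{\vee}}$ is contracting and safe, whereas conjugating by $\pi^{-\lambda^{\vee}}$ (or $U^{-}$ by $\pi^{+\lambda^{\vee}}$) need not preserve integrality, so the factorizations must be arranged accordingly. The key structural inputs are the compatibility of $U^{+}=(U^{+}\cap w^{-1}U^{+}w)\cdot U^{+}_{w}$ with the congruence filtration and the inclusions $U^{+}_{w,\pi}\subseteq U^{+}_{\mathcal{O}}\subseteq K$ together with $\pi^{\lambda^{\vee}}U^{+}_{w,\pi}\pi^{-\lambda^{\vee}}\subseteq U^{+}_{w,\pi}$ for dominant $\lambda^{\vee}$; regularity of $\lambda^{\vee}$ plays no role in this lemma.
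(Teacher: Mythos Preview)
Your proof is correct and follows essentially the same route as the paper's: both halves proceed by direct manipulation with the Iwasawa and Iwahori--Matsumoto decompositions, using that $\pi^{\lambda^{\vee}}U^{+}_{w,\pi}\pi^{-\lambda^{\vee}}\subseteq U^{+}_{\pi}\subseteq I^{-}$ and $\pi^{-\lambda^{\vee}}U^{-}_{\mathcal{O}}\pi^{\lambda^{\vee}}\subseteq U^{-}_{\mathcal{O}}\subseteq K$ for dominant $\lambda^{\vee}$. Your surjectivity argument is in fact slightly more explicit than the paper's: where the paper passes from $uwu^{+}_{\pi}$ directly to ``$uwu_{w,\pi}$ for some $u_{w,\pi}\in U^{+}_{w,\pi}$'' without comment, you spell out the factorization $u^{+}_{\pi}=b\,u_{w,\pi}$ with $b\in (U^{+}\cap w^{-1}U^{+}w)_{\pi}$ and absorb $wbw^{-1}\in U^{+}$ into $u$, which is exactly the missing justification.
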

\subsection{Finiteness of Level Sets }
Let $U^{-}_{w^{-1} }[\lambda^{\vee}]\colon=\pi^{-w\lambda^{\vee}}U^{-}_{w^{-1},\pi}\pi^{w\lambda^{\vee}}\;\;\text{and}\; \; U^{-}_{w^{-1}}[\lambda^{\vee},\mu^{\vee}]:= U^{-}_{w^{-1}}[\lambda^{\vee}]\cap U\pi^{\mu^{\vee}}K.$

\begin{remark}
If $u_{w}\in U^{+}_{w,\pi}$ satisfies $wu_{w}\pi^{\lambda^{\vee}}\in U\pi^{\mu^{\vee}}K$, then 
\begin{eqnarray}
\pi^{-w\lambda^{\vee}}u_{w}\pi^{w\lambda^{\vee}}\in U\pi^{\mu^{\vee}-w\lambda^{\vee}}K\cap U^{-}_{w^{-1} }[\lambda^{\vee}].
\end{eqnarray}
\end{remark}
\noindent Set
 $$\widehat{Y}:=\{\mu^{\vee}-w\lambda^{\vee}\mid \mu^{\vee}\in Y\},$$ 
where $Y=Supp(\tilde{I}_{w,\lambda^{\vee}})$.

\begin{lemma}\label{lemfiberfin1}
For each $\xi\in\widehat{Y}$, the coset space $ U^{-}_{w^{-1}}[\lambda^{\vee},\xi^{\vee}]/U^{-}_{w^{-1},\pi}$ is finite.
\proof The integral $\tilde{I}_{w,\lambda^{\vee}}$ which is defined in Section~\ref{Int} can be written as
\begin{eqnarray}
 \tilde{I}_{w,\lambda^{\vee}}&=&\int_{U^{-}_{w^{-1},\pi}}\Phi_{\rho}(u^{-}_{w}\pi^{w\lambda^{\vee}})\tilde{du^{-}_{w}}\nonumber\\
 &=&\int_{U^{-}_{w^{-1}}[\lambda^{\vee}]}\Phi_{\rho}(\pi^{w\lambda^{\vee}}u^{-}_{w})\tilde{du^{-}_{w}}\nonumber\\
 &=&q^{-\langle \rho, w\lambda^{\vee}\rangle  }e^{w\lambda^{\vee}}\int_{U^{-}_{w^{-1}}[\lambda^{\vee}]}\Phi_{\rho}(u^{-}_{w})\tilde{du^{-}_{w}}\nonumber\\
 &=&q^{-\langle \rho, w\lambda^{\vee}\rangle  }e^{w\lambda^{\vee}}\sum_{\xi^{\vee}\in \widehat{Y}}Vol(U^{-}_{w^{-1}}[\lambda^{\vee}]\cap U\pi^{\xi^{\vee}}K) q^{-\langle \rho, \xi^{\vee}\rangle }e^{\xi^{\vee}}\nonumber\\
 &=&q^{-\langle \rho, w\lambda^{\vee}\rangle  }e^{w\lambda^{\vee}}\sum_{\xi^{\vee}\in \widehat{Y}}|U^{-}_{w^{-1}}[\lambda^{\vee}]\cap U\pi^{\xi^{\vee}}K/U^{-}_{w^{-1},\pi}| q^{-\langle \rho, \xi^{\vee}\rangle }e^{\xi^{\vee}}\nonumber\\
 &=&\sum_{\xi^{\vee}\in \widehat{Y}}|U^{-}_{w^{-1}}[\lambda^{\vee}]\cap U\pi^{\xi^{\vee}}K/U^{-}_{w^{-1},\pi}| q^{-\langle \rho, w\lambda^{\vee}+\xi^{\vee}\rangle }e^{w\lambda^{\vee}+\xi^{\vee}}.\label{quotientfiberfin}
 \end{eqnarray} 
By Theorem~\ref{deluttheorem}, for each $\mu^{\vee}\in Y$, there exists a constant $D$ such that
\begin{eqnarray}\label{fberfin5}
[e^{\mu^{\vee}}] \tilde{I}_{w,\lambda^{\vee}}=D[e^{\mu^{\vee}}] T_{w}(e^{\lambda^{\vee}}).
\end{eqnarray}
Since the right hand side of (\ref{fberfin5}) is finite, so is the left hand side and hence the lemma follows.
\end{lemma}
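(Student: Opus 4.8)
The plan is to deduce the assertion from the convergence of the integral $\tilde I_{w,\lambda^{\vee}}$, which is already available: by Corollary~\ref{fsuppc2} together with (\ref{intcomparison}), $\tilde I_{w,\lambda^{\vee}}$ is a genuine element of $\mathbb{C}[\Lambda^{\vee}]$ --- a finite $\mathbb{C}$-linear combination of monomials $e^{\mu^{\vee}}$, $\mu^{\vee}\in Y=\mathrm{Supp}(\tilde I_{w,\lambda^{\vee}})$, each with a finite complex coefficient. The idea is to expand $\tilde I_{w,\lambda^{\vee}}$ so that, for $\xi^{\vee}\in\widehat{Y}$, the coefficient of $e^{w\lambda^{\vee}+\xi^{\vee}}$ is a fixed positive constant times $|U^{-}_{w^{-1}}[\lambda^{\vee},\xi^{\vee}]/U^{-}_{w^{-1},\pi}|$; finiteness of that coefficient then forces finiteness of the cardinality.

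For the expansion I would change variables in $\tilde I_{w,\lambda^{\vee}}=\int_{U^{-}_{w^{-1},\pi}}\Phi_{\rho}(u^{-}_{w}\pi^{w\lambda^{\vee}})\,\tilde{du^{-}_{w}}$ by $u^{-}_{w}\mapsto\pi^{-w\lambda^{\vee}}u^{-}_{w}\pi^{w\lambda^{\vee}}$, carrying the domain onto $U^{-}_{w^{-1}}[\lambda^{\vee}]$; since the torus normalizes $U^{+}$ one has $\Phi_{\rho}(\pi^{w\lambda^{\vee}}g)=q^{-\langle\rho,w\lambda^{\vee}\rangle}e^{w\lambda^{\vee}}\Phi_{\rho}(g)$, so the integral becomes $q^{-\langle\rho,w\lambda^{\vee}\rangle}e^{w\lambda^{\vee}}\int_{U^{-}_{w^{-1}}[\lambda^{\vee}]}\Phi_{\rho}(u^{-}_{w})\,\tilde{du^{-}_{w}}$. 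Partitioning $U^{-}_{w^{-1}}[\lambda^{\vee}]$ into its Iwasawa level sets $U^{-}_{w^{-1}}[\lambda^{\vee}]\cap U\pi^{\xi^{\vee}}K=U^{-}_{w^{-1}}[\lambda^{\vee},\xi^{\vee}]$, on each of which $\Phi_{\rho}$ is the constant $q^{-\langle\rho,\xi^{\vee}\rangle}e^{\xi^{\vee}}$, and integrating term by term gives $\tilde I_{w,\lambda^{\vee}}=\sum_{\xi^{\vee}\in\widehat{Y}}\mathrm{Vol}\!\left(U^{-}_{w^{-1}}[\lambda^{\vee},\xi^{\vee}]\right)q^{-\langle\rho,w\lambda^{\vee}+\xi^{\vee}\rangle}e^{w\lambda^{\vee}+\xi^{\vee}}$, with the measure normalized so that $U^{-}_{w^{-1},\pi}$ has volume one.

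The substantive point is to identify $\mathrm{Vol}\!\left(U^{-}_{w^{-1}}[\lambda^{\vee},\xi^{\vee}]\right)$ with a positive constant times $|U^{-}_{w^{-1}}[\lambda^{\vee},\xi^{\vee}]/U^{-}_{w^{-1},\pi}|$. For this I would record the containment $U^{-}_{w^{-1},\pi}\subseteq U^{-}_{w^{-1}}[\lambda^{\vee}]$: conjugation by $\pi^{w\lambda^{\vee}}$ rescales the $\beta$-root subgroup, $\beta\in S^{-}_{w^{-1}}$, by $\pi^{\langle w^{-1}\beta,\lambda^{\vee}\rangle}$, and $w^{-1}\beta\in\Delta_{+}$ with $\lambda^{\vee}$ dominant gives $\langle w^{-1}\beta,\lambda^{\vee}\rangle\geq0$, so $\pi^{w\lambda^{\vee}}U^{-}_{w^{-1},\pi}\pi^{-w\lambda^{\vee}}\subseteq U^{-}_{w^{-1},\pi}$, that is $U^{-}_{w^{-1},\pi}\subseteq U^{-}_{w^{-1}}[\lambda^{\vee}]$. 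Hence $U^{-}_{w^{-1},\pi}$ is an open subgroup of $U^{-}_{w^{-1}}[\lambda^{\vee}]$, and $U^{-}_{w^{-1}}[\lambda^{\vee},\xi^{\vee}]$ --- being $U^{-}_{w^{-1}}[\lambda^{\vee}]$ cut by the left coset $U\pi^{\xi^{\vee}}K$ with $U^{-}_{w^{-1},\pi}\subset K$ --- is right-$U^{-}_{w^{-1},\pi}$-stable, so it is a disjoint union of right $U^{-}_{w^{-1},\pi}$-cosets, each of the same positive volume; thus its volume is a positive constant times $|U^{-}_{w^{-1}}[\lambda^{\vee},\xi^{\vee}]/U^{-}_{w^{-1},\pi}|$. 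Comparing this with the expansion above and recalling (Corollary~\ref{fsuppc2}, Corollary~\ref{deluttheorem}) that the coefficient is a finite complex number forces $|U^{-}_{w^{-1}}[\lambda^{\vee},\xi^{\vee}]/U^{-}_{w^{-1},\pi}|<\infty$, which is the assertion. I expect the only real obstacle to be keeping the several measure normalizations and conjugations consistent; beyond the convergence of $\tilde I_{w,\lambda^{\vee}}$ already in hand, no new ingredient is needed.
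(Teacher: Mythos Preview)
Your proposal is correct and follows essentially the same route as the paper: rewrite $\tilde I_{w,\lambda^{\vee}}$ via the conjugation $u^{-}_{w}\mapsto\pi^{-w\lambda^{\vee}}u^{-}_{w}\pi^{w\lambda^{\vee}}$, factor out $q^{-\langle\rho,w\lambda^{\vee}\rangle}e^{w\lambda^{\vee}}$, decompose the remaining integral over Iwasawa level sets, identify the resulting volume with the coset count, and invoke the convergence furnished by Corollary~\ref{fsuppc2}/Corollary~\ref{deluttheorem}. Your write-up is in fact more careful than the paper's on two points the paper leaves implicit --- the containment $U^{-}_{w^{-1},\pi}\subseteq U^{-}_{w^{-1}}[\lambda^{\vee}]$ and the right-$U^{-}_{w^{-1},\pi}$-stability of the level sets --- but the argument is the same.
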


\subsection{Main Results}
In this subsection, we obtain the finiteness of $m_{w,\lambda^{\vee}}^{-1}(\mu^{\vee})$. 

\begin{lemma}\label{fibfinprop1}
There exists a one-to-one map from $\mathcal{X}^{+}_{w,\pi}(\mu^{\vee})$ to $U^{-}_{w^{-1}}[\lambda^{\vee},\xi^{\vee}]/U^{-}_{w^{-1},\pi}$.
\proof 

 By using the fact
$$w\pi^{-\lambda}U^{+}_{w.\pi}(\mu^{\vee})\pi^{\lambda}w^{-1}\subset U^{-}_{w^{-1}}[\lambda^{\vee},\xi^{\vee}]$$
\noindent we define a map 
\begin{eqnarray*}
\psi\colon \mathcal{X}^{+}_{w,\pi}(\mu^{\vee})\longrightarrow U^{-}_{w^{-1}}[\lambda^{\vee},\xi^{\vee}]/U^{-}_{w^{-1},\pi}
\end{eqnarray*}
as,
\begin{eqnarray}
\psi ([u_{w}])=(w\pi^{-\lambda}u_{w}\pi^{\lambda}w^{-1})U^{-}_{w^{-1},\pi}.
\end{eqnarray}
We prove that $\psi$ is our required one-to-one map. First, we show that \\

\noindent {\bf $\psi$ is well defined:} Let $u_{w},z_{w}\in U^{+}_{w,\pi}(\mu^{\vee})$ and $u_{w}\sim z_{w}$ then there exists $u^{+}\in U^{+}_{w,\pi}$ such that 
\begin{eqnarray}
u_{w}= z_{w}\pi^{\lambda^{\vee}}u^{+}\pi^{-\lambda^{\vee}}.
\end{eqnarray}
Hence $(z_{w})^{-1}u_{w}= \pi^{\lambda^{\vee}}u^{+}\pi^{-\lambda^{\vee}}$ and 
\begin{eqnarray}
w\pi^{-\lambda^{\vee}}(z_{w})^{-1}u_{w}\pi^{\lambda^{\vee}}w^{-1}=w\pi^{-\lambda^{\vee}}(\pi^{\lambda^{\vee}}u^{+}\pi^{-\lambda^{\vee}})\pi^{\lambda^{\vee}}w^{-1}.
\end{eqnarray}
Since $u^{-}=w(u^{+})^{-1}w^{-1}\in U^{-}_{w^{-1},\pi}$, we get
\begin{eqnarray}
w\pi^{-\lambda^{\vee}}u^{-}_{w^{-1}}\pi^{\lambda^{\vee}}w^{-1}U^{-}_{w,\pi}&=& w\pi^{-\lambda^{\vee}}z^{-}_{w^{-1}}\pi^{\lambda^{\vee}}w^{-1}U^{-}_{w,\pi}
\end{eqnarray}
and hence $\psi$ is well defined.\\

{\bf $\psi$ is injective:} To show that $\psi$ is one-one, suppose  $u_{w},z_{w}\in U^{+}_{w,\pi}(\mu^{\vee})$ be such that
$$\psi([u_{w}])=\psi([z_{w}])$$
that is,
\begin{eqnarray}
w\pi^{-\lambda^{\vee}}z_{w}^{-1}u_{w}\pi^{w\lambda^{\vee}}w^{-1}&\in& U^{-}_{w^{-1},\pi}\nonumber\\
\pi^{-\lambda^{\vee}}z_{w}^{-1}u_{w}\pi^{\lambda^{\vee}} &\in& w^{-1} U^{-}_{w^{-1},\pi}w.\nonumber
\end{eqnarray}
Consequently,  $z_{w}^{-1}u_{w}\in \pi^{\lambda^{\vee}} U^{+}_{w, \pi} \pi^{-\lambda^{\vee}}$ and $z_{w} \sim u_{w}$. Hence $\psi$ is a one-one map. 
\end{lemma}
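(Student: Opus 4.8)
The plan is to exhibit an explicit injection $\psi\colon \mathcal{X}^{+}_{w,\pi}(\mu^{\vee})\longrightarrow U^{-}_{w^{-1}}[\lambda^{\vee},\xi^{\vee}]/U^{-}_{w^{-1},\pi}$, with $\xi^{\vee}=\mu^{\vee}-w\lambda^{\vee}$, that mimics the relation between the two sides. The natural candidate is
$$\psi([u_{w}])\;=\;\bigl(w\pi^{-\lambda^{\vee}}u_{w}\pi^{\lambda^{\vee}}w^{-1}\bigr)\,U^{-}_{w^{-1},\pi}.$$
Before checking anything I would record the two bookkeeping identities that do all the work: conjugation by (an integral lift of) $w$ sends $U^{+}_{w,\pi}$ isomorphically onto $U^{-}_{w^{-1},\pi}$ --- this is the identity $w(S^{+}_{w})=S^{-}_{w^{-1}}$ read through the root-group description of these subgroups --- and $w\pi^{\nu^{\vee}}w^{-1}=\pi^{w\nu^{\vee}}$ for the torus. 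Both are standard in the Carbone--Garland group, and dominance and regularity of $\lambda^{\vee}$ are already built into the objects $U^{-}_{w^{-1}}[\lambda^{\vee}]$ and into Lemma~\ref{lemfiberfin1}, so this particular lemma should reduce to a clean diagram chase rather than any new estimate.

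With these in hand, well-definedness is immediate: if $u_{w}\sim z_{w}$, say $z_{w}^{-1}u_{w}=\pi^{\lambda^{\vee}}u^{+}\pi^{-\lambda^{\vee}}$ with $u^{+}\in U^{+}_{w,\pi}$, then $w\pi^{-\lambda^{\vee}}(z_{w}^{-1}u_{w})\pi^{\lambda^{\vee}}w^{-1}=wu^{+}w^{-1}\in U^{-}_{w^{-1},\pi}$, so the two candidate images of $[u_{w}]$ and $[z_{w}]$ agree modulo $U^{-}_{w^{-1},\pi}$. The same computation, run with the defining relation $wu_{w}\pi^{\lambda^{\vee}}=u\pi^{\mu^{\vee}}k$ of $U^{+}_{w,\pi}(\mu^{\vee})$, shows that $w\pi^{-\lambda^{\vee}}u_{w}\pi^{\lambda^{\vee}}w^{-1}=\pi^{-w\lambda^{\vee}}(wu_{w}w^{-1})\pi^{w\lambda^{\vee}}$ lies in $\pi^{-w\lambda^{\vee}}U^{-}_{w^{-1},\pi}\pi^{w\lambda^{\vee}}=U^{-}_{w^{-1}}[\lambda^{\vee}]$ and, after pushing the torus factors through $u$, also in $U\pi^{\mu^{\vee}-w\lambda^{\vee}}K=U\pi^{\xi^{\vee}}K$; hence the image genuinely lands in $U^{-}_{w^{-1}}[\lambda^{\vee},\xi^{\vee}]$. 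This is essentially the content of the Remark preceding Lemma~\ref{lemfiberfin1}.

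For injectivity I would reverse the first computation: if $\psi([u_{w}])=\psi([z_{w}])$ then $w\pi^{-\lambda^{\vee}}z_{w}^{-1}u_{w}\pi^{\lambda^{\vee}}w^{-1}\in U^{-}_{w^{-1},\pi}$, and conjugating back by $w^{-1}$ gives $\pi^{-\lambda^{\vee}}z_{w}^{-1}u_{w}\pi^{\lambda^{\vee}}\in w^{-1}U^{-}_{w^{-1},\pi}w=U^{+}_{w,\pi}$, i.e.\ $z_{w}^{-1}u_{w}\in\pi^{\lambda^{\vee}}U^{+}_{w,\pi}\pi^{-\lambda^{\vee}}$, which is exactly the relation $z_{w}\sim u_{w}$; thus $[u_{w}]=[z_{w}]$. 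The only place I expect to need care is keeping the direction of the $w$-conjugation and the index sets $S^{\pm}_{w}$ versus $S^{\pm}_{w^{-1}}$ straight throughout, together with pinning down $\xi^{\vee}$ as $\mu^{\vee}-w\lambda^{\vee}$ once and for all; no genuine analytic input enters here, since that was already spent in Lemma~\ref{lemfiberfin1} to produce $U^{-}_{w^{-1}}[\lambda^{\vee},\xi^{\vee}]/U^{-}_{w^{-1},\pi}$ as a finite set. Combined with the surjection $\phi$ of Lemma~\ref{fibfinxlemma2}, this injection then forces each Iwahori piece $m_{w,\lambda^{\vee}}^{-1}(\pi^{\mu^{\vee}})$ to be finite.
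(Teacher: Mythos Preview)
Your proposal is correct and follows essentially the same route as the paper: you define the identical map $\psi([u_{w}])=(w\pi^{-\lambda^{\vee}}u_{w}\pi^{\lambda^{\vee}}w^{-1})U^{-}_{w^{-1},\pi}$, verify well-definedness and injectivity by the same conjugation computation reducing to $wU^{+}_{w,\pi}w^{-1}=U^{-}_{w^{-1},\pi}$, and invoke the Remark preceding Lemma~\ref{lemfiberfin1} to see that the image lands in $U^{-}_{w^{-1}}[\lambda^{\vee},\xi^{\vee}]$ with $\xi^{\vee}=\mu^{\vee}-w\lambda^{\vee}$. If anything, your write-up is slightly more explicit than the paper's in spelling out the identity $w(S^{+}_{w})=S^{-}_{w^{-1}}$ and in tracking the torus conjugation $w\pi^{\nu^{\vee}}w^{-1}=\pi^{w\nu^{\vee}}$.
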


\begin{proposition}\label{finalprop}
For $w\in \Omega$, $\mu^{\vee}\in  \Lambda^{\vee}$ and $\lambda^{\vee}\in \Lambda^{\vee}_{+}$ regular, the fibers $m_{w,\lambda^{\vee}}^{-1}(\pi^{\mu^{\vee}})$ is finite.
\proof By Lemma~\ref{fibfinprop1}, the set $ \mathcal{X}^{+}_{w,\pi}(\mu^{\vee})$ is embedded in $U^{-}_{w^{-1}}[\lambda^{\vee},\xi^{\vee}]/U^{-}_{w^{-1},\pi}$. The quotient  $U^{-}_{w^{-1}}[\lambda^{\vee},\xi^{\vee}]/U^{-}_{w^{-1},\pi}$ is finite  by Lemma~\ref{lemfiberfin1} and hence $ \mathcal{X}^{+}_{w,\pi}(\mu^{\vee})$ is finite. Finally, by Lemma~\ref{fibfinxlemma2} the finite set $ \mathcal{X}^{+}_{w,\pi}(\mu^{\vee})$ is mapped onto the fiber $m_{w,\lambda^{\vee}}^{-1}(\mu^{\vee})$ which implies the finiteness of $m_{w,\lambda^{\vee}}^{-1}(\mu^{\vee})$. 
\end{proposition}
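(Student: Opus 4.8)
The plan is to obtain the statement by threading together the three lemmas just established, transferring finiteness along two maps. First I would observe that, by Lemma~\ref{fibfinxlemma2}, the map $\phi\colon \mathcal{X}^{+}_{w,\pi}(\mu^{\vee})\longrightarrow m_{w,\lambda^{\vee}}^{-1}(\pi^{\mu^{\vee}})$ is surjective; hence it suffices to prove that the quotient set $\mathcal{X}^{+}_{w,\pi}(\mu^{\vee})$ is finite. If $U^{+}_{w,\pi}(\mu^{\vee})=\emptyset$ then $\mathcal{X}^{+}_{w,\pi}(\mu^{\vee})=\emptyset$ and the fiber is empty, so I may assume it is nonempty.

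To bound $\mathcal{X}^{+}_{w,\pi}(\mu^{\vee})$, I would set $\xi^{\vee}:=\mu^{\vee}-w\lambda^{\vee}$, as dictated by the Remark preceding Lemma~\ref{lemfiberfin1}, and invoke Lemma~\ref{fibfinprop1}: the map $\psi([u_{w}])=(w\pi^{-\lambda^{\vee}}u_{w}\pi^{\lambda^{\vee}}w^{-1})U^{-}_{w^{-1},\pi}$ is an injection $\mathcal{X}^{+}_{w,\pi}(\mu^{\vee})\hookrightarrow U^{-}_{w^{-1}}[\lambda^{\vee},\xi^{\vee}]/U^{-}_{w^{-1},\pi}$. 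So it remains to see that this coset space is finite. Since $U^{+}_{w,\pi}(\mu^{\vee})\ne\emptyset$, the Remark forces $U^{-}_{w^{-1}}[\lambda^{\vee},\xi^{\vee}]\ne\emptyset$, so this set contributes a nonzero term to the expansion of $\tilde{I}_{w,\lambda^{\vee}}$ computed in (\ref{quotientfiberfin}); that is, $w\lambda^{\vee}+\xi^{\vee}\in Y=\mathrm{Supp}(\tilde{I}_{w,\lambda^{\vee}})$, equivalently $\xi^{\vee}\in\widehat{Y}$. Lemma~\ref{lemfiberfin1} now gives the finiteness of $U^{-}_{w^{-1}}[\lambda^{\vee},\xi^{\vee}]/U^{-}_{w^{-1},\pi}$, hence of $\mathcal{X}^{+}_{w,\pi}(\mu^{\vee})$, hence of its surjective image $m_{w,\lambda^{\vee}}^{-1}(\pi^{\mu^{\vee}})$.

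In this chain the only nontrivial ingredient is the finiteness of the level sets of $\tilde{I}_{w,\lambda^{\vee}}$ (Lemma~\ref{lemfiberfin1}), which itself rests on Corollary~\ref{fsuppc2} and the proportionality (\ref{intcomparison}): $\tilde{I}_{w,\lambda^{\vee}}$ has finite support in $\mathbb{C}[\Lambda^{\vee}]$ because, by Proposition~\ref{proprkgt1}, it is governed by the Demazure-Lusztig operator ${\mathbf T}_{w}$ acting on $e^{\lambda^{\vee}}$, which lies in $\mathbb{C}[\Lambda^{\vee}]$ at $v=q^{-1}$. So the present proposition is really the closing bookkeeping step, and the only point demanding a little care is pinning down the index $\xi^{\vee}$ and checking $\xi^{\vee}\in\widehat{Y}$, which is the argument given above.
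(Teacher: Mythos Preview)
Your proposal is correct and follows essentially the same route as the paper: chain Lemma~\ref{fibfinprop1} (injection of $\mathcal{X}^{+}_{w,\pi}(\mu^{\vee})$ into the level set), Lemma~\ref{lemfiberfin1} (finiteness of the level set), and Lemma~\ref{fibfinxlemma2} (surjection onto the fiber). If anything you are slightly more careful than the paper, since you make explicit the identification $\xi^{\vee}=\mu^{\vee}-w\lambda^{\vee}$ and check that $\xi^{\vee}\in\widehat{Y}$ before invoking Lemma~\ref{lemfiberfin1}.
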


This completes the proof of the Weak Spherical Finiteness.

\section{ Finiteness Results}\label{secapplication}
In this section, we get the proofs the other finiteness theorems.
\subsection{Proof of Gindikin-Karpelevich Finiteness}In the following, we apply the weak Spherical Finiteness and Approximation Theorem to get the proof of the Gindikin-Karpelevich Finiteness.

\begin{proof}\let\qed\relax
By the Approximation Theorem, for $\mu^{\vee}\in \Lambda^{\vee}$ there exists $\lambda^{\vee}\in \Lambda^{\vee}_{+}$ regular and sufficiently big such that 
\begin{eqnarray}\label{eqsphfinw1}
K\pi^{\lambda^{\vee}}K\cap K\pi^{\mu^{\vee}}U^{+} =K\pi^{\lambda^{\vee}}U^{-}\cap K\pi^{\mu^{\vee}}U^{+}.
\end{eqnarray}
The Weak Spherical Finiteness implies that by choosing $\lambda^{\vee}$ sufficiently dominant, the set $K\backslash K\pi^{\lambda^{\vee}}K\cap K\pi^{\mu^{\vee}}U^{+} $ is finite and hence by (\ref{eqsphfinw1}), $K\backslash K\pi^{\lambda^{\vee}}U^{-}\cap K\pi^{\mu^{\vee}}U^{+}$ is also finite. Now, there is a bijection of sets 
\begin{eqnarray}
 K\pi^{\lambda^{\vee}}U^{-}\cap K\pi^{\mu^{\vee}}U^{+}&=&  K\pi^{\lambda^{\vee}}U^{-}\pi^{-\lambda^{\vee}}\cap K\pi^{\mu^{\vee}}U^{+}\pi^{-\lambda^{\vee}}\nonumber\\
 &=&KU^{-}\cap K\pi^{\mu^{\vee}-\lambda^{\vee}}U^{+}.
\end{eqnarray}
So, $K\backslash KU^{-}\cap K\pi^{\xi^{\vee}}U^{+}$ is finite for any $\xi^{\vee}=\mu^{\vee}-\lambda^{\vee}\in \Lambda^{\vee}$. This implies the Gindikin-Karpelevich Finiteness for $\lambda^{\vee}=0$, and hence the finiteness follows in general as well.
\end{proof}
\subsection{Proof of Spherical Finiteness}
Lastly, we sketch the proof of Spherical Finiteness as an implication of the Gindikin-Karpelevich Finiteness which follows exactly as it does in affine case (see \cite[Section 6.2]{BGKP}).

\begin{proof}\let\qed\relax
Let $\xi^{\vee}\in \Lambda^{\vee}$ and $\lambda^{\vee}\in \Lambda^{\vee}_{+}$. Theorem~\ref{maintheorem2} and the following fact from Theorem 1.9 of {\it op. cit.} (which also follows in general) 
\begin{eqnarray}\label{eqsphfin}
K\pi^{\xi^{\vee}}U^{-}\cap  K\pi^{\lambda^{\vee}}K=\emptyset\;\;\text{unless}\;\; \xi^{\vee}\le \lambda^{\vee},
\end{eqnarray}
allow us to write
 \begin{eqnarray}\label{eqsphfin}
K\pi^{\lambda^{\vee}}K\cap K\pi^{\mu^{\vee}}U^{+} =\bigcup_{\mu^{\vee}\le \xi^{\vee}\le \lambda^{\vee}} K\pi^{\xi^{\vee}}U^{-}\cap K\pi^{\mu^{\vee}}U^{+}\cap K\pi^{\lambda^{\vee}}K.
\end{eqnarray}
The Spherical Finiteness, i.e., the finiteness of $K\backslash K\pi^{\lambda^{\vee}}K\cap K\pi^{\mu^{\vee}}U^{+}$ is then a consequence of the finiteness of 
the set $\mathcal{B}_{\lambda^{\vee},\mu^{\vee}}=\{\xi^{\vee}\in\Lambda^{\vee}\mid \mu^{\vee}\le \xi^{\vee}\ \le\lambda^{\vee} \},$
 containment
\begin{eqnarray}
 K\pi^{\xi^{\vee}}U^{-}\cap K\pi^{\mu^{\vee}}U^{+}\cap K\pi^{\lambda^{\vee}}K\subset K\pi^{\xi^{\vee}}U^{-}\cap K\pi^{\mu^{\vee}}U^{+}
 \end{eqnarray}
 and Gindikin-Karpelevich Finiteness.  
 \end{proof}

\end{document}